\documentclass[11pt,a4paper,twoside]{amsart}

\usepackage{a4wide}
\usepackage[utf8]{inputenc}
\usepackage{amsmath}
\usepackage{amsthm}
\usepackage{mathrsfs}
\usepackage{wasysym}
\usepackage{enumitem}
\usepackage{cite}
\usepackage[footnotesize]{caption}
\usepackage{color}   
\usepackage{wrapfig}
\usepackage{tgschola,eulervm}
\usepackage{comment}
\usepackage{amsfonts}
\usepackage{amssymb}
\usepackage{graphicx}
\usepackage[usenames,dvipsnames]{xcolor}

%
%

\newcommand{\heikodetail}[1]{}
\newcommand{\hide}[1]{}


\DeclareMathOperator{\diam}{diam}

\DeclareMathOperator{\dist}{dist}
\DeclareMathOperator{\simp}{\bigtriangleup}
\DeclareMathOperator{\graph}{graph}
\DeclareMathOperator{\lip}{Lip}
\DeclareMathOperator{\bilip}{biLip}
\DeclareMathOperator{\spt}{spt}
\DeclareMathOperator{\Hom}{Hom}

\DeclareMathOperator{\trace}{trace}
\DeclareMathOperator{\im}{im}

\DeclareMathOperator{\without}{\sim}

\DeclareMathOperator{\narrow}{\llcorner}
\newcommand\ang{\mathop{\mbox{$<\!\!\!)$}}\nolimits}

\newcommand{\R}{\mathbb{R}}
\newcommand{\N}{\mathbb{N}}

\renewcommand{\S}{\mathbb{S}}

\newcommand{\D}{\mathbb{D}}
\newcommand{\Gr}{\mathcal{G}}

\newcommand\Id{{{\rm Id}}}

\newcommand{\E}{\mathcal{E}}
\newcommand{\TP}{\mathcal{T}}
\newcommand{\F}{\mathcal{F}}

\newcommand{\DC}{\mathcal{K}}
\newcommand{\Rtp}{R_{\mathrm{tp}}}

\newcommand{\RM}{\mathscr{M}}
\newcommand{\HM}{\mathcal{H}} 
 
\newcommand{\HD}{d_{\mathcal{H}}}

\newcommand{\Kugel}{\mathbb{B}}
\newcommand{\CKugel}{\overline{\mathbb{B}}}

\newcommand{\Ball}[2]{\mathbb{B}(#1,#2)}
\newcommand{\CBall}[2]{\overline{\mathbb{B}}(#1,#2)}

\newcommand{\Sphere}{\mathbb{S}}

\newcommand{\eps}{\varepsilon}

\newcommand{\Cmn}{\mathscr{C}^{1,\alpha}_{m,n}}
\newcommand{\Lmn}{\mathscr{C}^{0,1}_{m,n}}

\newcommand{\Amn}{\mathcal{A}_{m,n}^{\E}}  
\newcommand{\tAmn}{\tilde{\mathcal{A}}_{m,n}^{\E}} 

\definecolor{mygreen}{RGB}{30,50,0}


\newtheorem{thm}{Theorem}[section]
\newtheorem*{thm*}{Theorem}
\newtheorem{prop}[thm]{Proposition}
\newtheorem{lem}[thm]{Lemma}
\newtheorem{cor}[thm]{Corollary}

\newtheorem{introthm}{Theorem}
\newtheorem{introcor}{Corollary}
\newtheorem*{introthm*}{Regularity Theorem}

\theoremstyle{definition}
\newtheorem{defin}[thm]{Definition}
\newtheorem{introdefin}{Definition}
\newtheorem{rem}[thm]{Remark}
\newtheorem*{rem*}{Remark}


\newcommand{\Fo}{\,\,\,\text{for }\,\,}
\newcommand{\Foa}{\,\,\,\text{for all }\,\,}

\newcommand{\AND}{\,\,\,\text{and }\,\,}

\newcommand{\As}{\,\,\,\text{as }\,\,}


\author{S{\l}awomir Kolasi\'{n}ski}
\address[S.~Kolasi{\'n}ski]{
  \newline{}
  Max Planck Institute for Gravitational Physics
  (Albert Einstein Institute)
  \newline{} 
  Am~M{\"u}hlenberg~1,
  D-14476 Golm, Germany}
\email{s.kolasinski@mimuw.edu.pl}

\author{Pawe{\l} Strzelecki}
\address[P.~Strzelecki]{
  \newline{}
  Institute of~Mathematics,
  University of~Warsaw
  \newline{}
  Banacha~2,
  02-097 Warsaw, Poland}
\email{p.strzelecki@mimuw.edu.pl}

\author{Heiko von~der~Mosel}
\address[H.~von~der~Mosel]{
  \newline{}
  Institut f{\"u}r Mathematik,
  RWTH Aachen University
  \newline{}
  Templergraben~55,
  D-52062 Aachen, Germany}
\email{heiko@instmath.rwth-aachen.de}

\keywords{Menger curvature, tangent-point energies, compactness, 
  semicontinuity, isotopy finiteness, topological constraints}
\subjclass{Primary:  53C23, 49Q20; Secondary: 49Q10, 49J45,  53C21,  57R52}

\date{\today}

\title[Compactness and finiteness of isotopy types
]
{Compactness and isotopy finiteness for submanifolds 
  \\ with uniformly bounded geometric curvature
  energies}


\begin{document}
\frenchspacing

\begin{abstract}  In this paper, we establish compactness 
for various geometric curvature energies including integral Menger curvature,
and tangent-point repulsive potentials, defined a priori on the class of
compact, embedded  $m$-dimensional Lipschitz submanifolds in $\R^n$.  
It turns out that due to a smoothing effect any sequence of 
submanifolds with uniformly bounded energy contains a subsequence 
converging in $C^1$ to a limit submanifold.

This result has two applications. The first one is an 
isotopy finiteness theorem: there are only finitely many
isotopy types of such submanifolds below a given energy value, and we 
provide explicit bounds on the number of isotopy types in terms
of the respective energy. The second one is the lower  semicontinuity 
-- with respect to Hausdorff-convergence of submanifolds --
of all geometric curvature energies under consideration, which can be used 
to minimise each of these energies within prescribed isotopy classes.  
	
\end{abstract}

\maketitle

\section{Introduction}   

\subsection{Introduction and main results}\label{sec:1.1}
In this paper, we prove compactness and isotopy finiteness for several
functionals $\E : \Lmn\to [0,\infty]$ --- we refer to them as \emph{geometric
  curvature energies} --- defined on the class $\Lmn$ of all compact,
$m$-dimensional embedded Lipschitz submanifolds
of $\R^n$.  To reach this goal, we use previously established uniform
$C^{1,\alpha}$-a priori estimates on local graph representations to not only
prove compactness, but also to gain sufficient geometric rigidity, such that two
submanifolds of finite energy that have small Hausdorff-distance are ambiently
isotopic.  As a consequence of this two-fold regularisation of these energies,
we obtain isotopy finiteness: each sub-level set
\begin{gather} 
    \label{Amn} 
    \Amn(E,d):=\left\{\Sigma\in \Lmn : \quad
        \E(\Sigma)\le E, \; \diam \Sigma\le d\right\} 
\end{gather} 
contains only finitely many manifolds up to diffeomorphism but also up to
\emph{isotopy}.  We also give a
crude yet explicit estimate of the number of these isotopy classes. In addition,
we prove lower semicontinuity of all geometric curvature energies with respect
to Hausdorff-convergence, which can be combined with compactness to
minimise each energy in a fixed isotopy class.

The compactness and finiteness theorems for abstract (smooth) Riemannian
manifolds, in different guises and under several sets of assumptions, date back
at least to J.~Cheeger's paper \cite{Che70}. In particular,
\cite[Thm.~3.1]{Che70} states that for $n\not=4$ and any given constant
$C<\infty$ there are only finitely many diffeomorphism types of Riemannian
manifolds $M$ such that
\begin{gather*}
    \big\|\, |K_M|\, \big\|_{L^\infty}^{1/2} \, \cdot \, \text{Vol}\,(M)^{1/n}
    + \frac{\diam M}{\text{Vol}\,(M)^{1/2}} < C\, .
\end{gather*}
The left-hand side is bounded if, for example, the sectional curvature satisfies
$|K_M|\le 1$, the diameter of $M$ is at most $d$ and the volume -- at least $v$;
the lower bound on $\text{Vol}\, M$ can be replaced by a lower bound on the
injectivity radius. Later on, M.~Gromov, see \cite{Gro78} and \cite[Thm.
8.28]{Gro81}, generalised Cheeger's work and introduced the powerful concept of
Gromov--Hausdorff convergence, enabling the study of collapse of sequences of
manifolds with bounded curvature, where in the absence of bounds on the
injectivity radius singularities can appear in the limit. For a proof of
Gromov's compactness theorem with an improvement on the regularity of the
limiting metric, we refer to S.~Peters~\cite{Pet87}. Anderson and Cheeger
\cite{AC92} prove that the space of all Riemannian manifolds with uniform lower
bounds on the Ricci curvature $\text{Ric}_M$ and the injectivity radius, and
uniform upper bounds on the volume, is compact in the $C^\alpha$ topology for
any $\alpha<1$ (meaning $C^\alpha$ convergence of the Riemannian metrics). The
same authors in \cite{AC91} obtain a finiteness theorem for $m$-dimensional
Riemannian manifolds with uniform upper bounds for the diameter and
$|\text{Ric}_M|$, uniform lower bounds for the volume, and uniform bounds for
the scalar curvature in $L^{m/2}$. Newer developments include the papers by 
A.~Petrunin and W.~Tuschmann \cite{TuschmannP-1999},
W.~Tuschmann~\cite{Tuschmann-2002}, and V.~Kapovich, 
A.~Petrunin and W.~Tuschmann
\cite{TuschmannKP-2005}. In particular, Tuschmann \cite{Tuschmann-2002} proves
that the class $\mathcal{M}(n,C,D)$ of simply connected closed $n$-dimensional
Riemannian manifolds with sectional curvature $|K|\le C$ and diameter ${}\le D$
contains finitely many diffeomorphism types provided $n\le 6$ (surprisingly,
this result fails in each dimension $n\ge 7$).

As Cheeger writes in his survey \cite[p. 235]{Che10}, in a passage commenting on
one of the versions of his own finiteness theorem, \emph{Intuitively, the idea
  is that these manifolds can be constructed from a definite numbers of standard
  pieces.} The same comment applies in the present paper, with one notable
difference: all the submanifolds we deal with are embedded in the same $\R^n$,
but their Riemannian metrics $g$ induced by this embedding are, typically,
\emph{only} of class $C^\alpha$ and not better, so that there is no way to
define the classic curvature tensor of $g$. Instead of that, we pick up a family
of geometric `energies' that can be defined without relying on the $C^2$ (or
$C^{1,1}$) regularity of the underlying manifold; a~bound on each of these
energies, combined with a bound on the diameter, yields a bound on the number of
ambient isotopy types (which is stronger than bounding the number of
diffeomorphism types). Each of these energies can, in fact, be defined also for
non-smooth sets, more general than Lipschitz submanifolds. It also can be
minimised in a~given isotopy class.

To state our results precisely,  let us introduce the appropriate 
definitions first. For an
$(m+2)$-tuple $(x_0,x_1,\ldots,x_{m+1})$ of points of $\R^n$, we denote
the $(m+1)$-dimensional simplex with vertices at the $x_i$'s by
$\simp(x_0,x_1,\ldots,x_{m+1})$. The \emph{discrete Menger curvature\/} of
$(x_0,x_1,\ldots,x_{m+1})$ is defined by
\begin{gather}
	\label{def:DC}
    \DC(x_0,\ldots,x_{m+1}) = \frac{\HM^{m+1}(\simp(x_0,\ldots,x_{m+1}))}
    {\diam(\{x_0,\ldots,x_{m+1}\})^{m+2}} \, .
\end{gather}  
For $m=1$, $n=3$ we have 
\begin{gather*}
    \DC(x_0,x_1,x_2) = 
    \frac{\text{Area}(\simp(x_0,x_1,x_2) )}
    {\max(|x_0-x_1|, |x_1-x_2|,|x_2-x_0|)^3}
    \le  \frac 1{4R(x_0,x_1,x_2)},
\end{gather*}
where $R(x_0,x_1,x_2)$ stands for the circumradius\footnote{The triple integral
  over the inverse squared circumradius also known as {\it total Menger curvature}
  was an essential tool in G. David's proof \cite{david_1998}
  of the famous Vitushkin conjecture
  on characterising the one dimensional compact subsets of the complex plane
  that are removable for bounded analytic functions; see, e.g.,  
  \cite{tolsa-book_2014}. The most obvious generalisation to inverse powers of
  circumsphere radii of simplices turns out to be too singular for our purposes
  here; see the discussion in \cite[Appendix B]{SvdM11a}.} of the triangle
$\simp(x_0,x_1,x_2)$.

For a Lipschitz manifold $\Sigma\in \Lmn$, a number $l\in \{ 1,\ldots,m+2\} $, and
$p>0$ we set
\begin{gather}
    \label{def:epl}
    \E_p^l(\Sigma) 
    = \int_{\Sigma^l} \sup_{x_l,\ldots,x_{m+1} \in \Sigma}
    \DC(x_0,\ldots,x_{m+1})^p\ d\HM^{ml}_{x_0,\ldots,x_{l-1}}\, .
\end{gather} 
The integration in \eqref{def:epl} is performed over the product
$\Sigma^l=\Sigma\times\ldots\times\Sigma$ of $l$ copies of $\Sigma$,
with respect to the $m$-dimensional Hausdorff measure $\HM^m$ on each
copy, i.e., with respect to $\HM^{ml}$ on $\Sigma^l$. Before that, one
takes the supremum of $\DC(x_0,\ldots, x_{m+1})$ with respect to all
variables $x_i$ with indices $i\ge l$. (For $l=m+2$, no supremum is
being taken). Please note that formally the integrand is 
undefined on the diagonal of the product. 
However, we tacitly omit this issue: the choice of values of the 
integrand on the diagonal does not affect 
the value of the integral in \eqref{def:epl}, as the diagonal is 
of measure zero in the product. 

In particular, the functional $\E^{m+2}_p$ is called 
the \emph{integral Menger curvature} of $\Sigma$.

Besides all the $\E_p^l$ energies, we consider also two other
functionals that are defined via averaging the inverse powers of the
radii of spheres that are tangent to $\Sigma$ at one point and pass
through another point of $\Sigma$. Namely, we write
\begin{gather}
	\label{def:Rtp}
    \Rtp(x,y) = \frac{|x-y|^2}{2 \dist(y, x + T_x\Sigma)}         
\end{gather} 
to denote the radius of the smallest sphere which passes through $y\in
\Sigma$ and is tangent to the $m$-dimensional affine plane
$x+T_x\Sigma$. (Note that for a Lipschitz manifold $\Sigma\in \Lmn$ the
tangent plane $T_x\Sigma\in G(n,m)$ is indeed well-defined for 
$\HM^m$-almost all $x\in\Sigma$ due to the
classic Rademacher theorem.) Set 
\begin{align} 
	\label{def:TP}
	\TP_p(\Sigma) 
    & = \int_{\Sigma} \int_{\Sigma} \Rtp(x,y)^{-p}\ d\HM^m_x\ d\HM^m_y ,\\
	\label{def:TPG}
    \TP_p^G(\Sigma)   
    & = \int_{\Sigma} \sup_{y \in \Sigma}
    \big{(}\Rtp(x,y)^{-p}\big{)}\ d\HM^m_x\, . 
\end{align}
(Again, in \eqref{def:TP} it does not matter how $1/\Rtp$ is defined 
on the diagonal in $\Sigma\times\Sigma$.)

For each of the energies $\E \in \{ \E_p^l, \TP_p, \TP_p^G \}$, we
write $p_0(\E)$ to denote the scaling invariant exponent for~$\E$.
Since $\DC$ and $1/\Rtp$ scale like the inverse of length, it is easy
to see that $p_0(\E)$ equals the product of $m$ and the number of
integrals of $\Sigma$ in the definition of $\E$. Thus,
\begin{gather}
    \label{def:p0}
    p_0(\E_p^l)=ml, \qquad p_0(\TP_p)=2m,   \qquad p_0(\TP_p^G)=m.
\end{gather}  
\begin{introthm}[\textbf{finiteness of isotopy types}]
    \label{thm:finiteness}
    Let $E,d > 0$ be some numbers. Assume that $\E\in \{\E_p^l,\TP_p,\TP_p^G\}$
    and $p>p_0(\E)$. There are at most $K = K(E,d,m,n,l,p)$ different
    (ambient) isotopy types in~$\Amn(E,d)$.
\end{introthm}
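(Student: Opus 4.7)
The strategy combines the uniform $C^{1,\alpha}$ a priori estimates mentioned in the introduction with a total-boundedness argument in the Hausdorff metric. First, since $p>p_0(\E)$ is supercritical, one would use the previously established estimates to extract constants $r_0>0$, $\alpha\in(0,1)$ and $L<\infty$, depending only on $E,m,n,l,p$, such that every $\Sigma\in\Amn(E,d)$ admits at each $x\in\Sigma$ a representation of $\Sigma\cap\Ball{x}{r_0}$ as the graph over the affine tangent plane $x+T_x\Sigma$ of a $C^{1,\alpha}$ function with $C^{1,\alpha}$-seminorm at most $L$. Uniformity of $r_0$ and $L$ across the whole sub-level set is the smoothing effect produced by the supercritical exponent.

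Next, I would establish an isotopy stability lemma: there exists $\eps_0=\eps_0(r_0,L,\alpha,m,n)>0$ such that whenever $\Sigma_1,\Sigma_2\in\Amn(E,d)$ satisfy $\HD(\Sigma_1,\Sigma_2)<\eps_0$, the submanifolds $\Sigma_1$ and $\Sigma_2$ are ambiently isotopic in $\R^n$. The uniform graph bound delivers a tubular neighbourhood $U_{\Sigma_1}$ of thickness comparable to $r_0/(1+L)$ on which the nearest-point projection $\pi_{\Sigma_1}\colon U_{\Sigma_1}\to\Sigma_1$ is a smooth retraction. For $\eps_0$ small, Hausdorff closeness forces $\Sigma_2\subset U_{\Sigma_1}$ and each normal fibre of $\Sigma_1$ meets $\Sigma_2$ in exactly one point, so $\Sigma_2$ is the graph of a normal section $s$ over $\Sigma_1$ with $\|s\|_{C^0}<\eps_0$; interpolation against the uniform $C^{1,\alpha}$ bound yields $\|s\|_{C^1}$ small. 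Linearly contracting $s$ to zero along normal fibres, multiplied by a cutoff supported in $U_{\Sigma_1}$, then produces the required compactly supported ambient diffeotopy of $\R^n$.

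Finally, I would perform the counting step. After a translation each $\Sigma\in\Amn(E,d)$ sits inside $\CBall{0}{d}$, and the space of compact subsets of $\CBall{0}{d}$ is compact under $\HD$ by the Blaschke selection theorem. Covering $\CBall{0}{d}$ by $N_1\le C(n)(d/\eps_0)^n$ Euclidean balls of radius $\eps_0/2$ and enumerating all non-empty sub-collections produces an explicit $\eps_0$-net of cardinality at most $2^{N_1}$ in the Hausdorff metric. By the stability lemma, each Hausdorff $\eps_0$-ball meets $\Amn(E,d)$ in at most one ambient isotopy class, giving the explicit bound
\begin{gather*}
    K(E,d,m,n,l,p) \;\le\; \exp\bigl(C(n)\,(d/\eps_0)^n\bigr),
\end{gather*}
with $\eps_0=\eps_0(E,m,n,l,p)$ fixed by the first two steps.

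The hard part will be the stability lemma, since one must produce a \emph{global ambient} isotopy, not merely a diffeomorphism $\Sigma_1\to\Sigma_2$, out of purely local $C^{1,\alpha}$ graph data at a single scale $r_0$. The most delicate sub-step is choosing $\eps_0$ small enough that no normal fibre of length $\eps_0$ above $\Sigma_1$ meets more than one sheet of $\Sigma_2$, thereby ruling out a priori ``foldings'' of $\Sigma_2$ inside $U_{\Sigma_1}$. This requires a uniform lower bound on the reach of every $\Sigma\in\Amn(E,d)$, and the verification that this bound is realised simultaneously for both $\Sigma_1$ and $\Sigma_2$, both of which are extracted from the uniform $C^{1,\alpha}$ graph estimate. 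Once the uniform one-sheeted tube is in hand, the cutoff-and-extend construction of the ambient isotopy is essentially standard.
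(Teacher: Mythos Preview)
Your three–step architecture (uniform $C^{1,\alpha}$ a~priori estimates, a Hausdorff–stability lemma for ambient isotopy, then a covering/counting argument with a $2^N$ bound) is exactly the paper's strategy, and your counting step is essentially identical to the paper's (which covers $[-d,d]^n$ by cubes of edge $\rho_G/(2\sqrt n)$ and assigns to each $\Sigma$ the pattern of cubes it meets).

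The gap is in the stability lemma. You propose to use the nearest–point projection $\pi_{\Sigma_1}$ and to write $\Sigma_2$ as the graph of a normal section $s$ with $\|s\|_{C^1}$ small. But the Regularity Theorem only gives $\Sigma\in C^{1,\alpha}$ with $\alpha=1-p_0(\E)/p<1$, so the Gauss map $x\mapsto T_x\Sigma$ is merely $C^{0,\alpha}$. Consequently the normal bundle is not a $C^1$ object, the nearest–point retraction is at best $C^{0,\alpha}$ (your ``smooth retraction'' claim fails), and there is no a~priori reason for the reach to be positive---$C^{1,\alpha}$ submanifolds with $\alpha<1$ can have reach zero, so the ``uniform lower bound on the reach'' you flag as the hard step is not obtainable from the $C^{1,\alpha}$ graph estimate alone. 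Even if one only wants a topological ambient isotopy, the single–sheet property of $\Sigma_2$ over the normal fibres of $\Sigma_1$ does not follow from Hausdorff smallness plus a graph bound without further work.

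The paper's fix is to replace the true normal spaces by a mollified \emph{$\varepsilon$--normal map} $\Phi:\Sigma\to\Gr$, obtained by convolving $x\mapsto(T_x\Sigma)^\perp_\natural$ (Lemma~\ref{lem:mollify}); $\Phi$ is $C^1$ with $\lip(\Phi)$ controlled by $R,L,\alpha,\varepsilon$. This yields a $C^1$ tubular neighbourhood $\Psi_\delta(x,v)=x+v$ on $\{v\in\im\Phi(x):|v|<\delta\}$ for an explicit $\delta_{tub}$ (Lemma~\ref{lem:tubular}), with bilipschitz bounds proved directly rather than via reach. That each approximate normal fibre meets $\Sigma_2$ exactly once is established by a mod~$2$ linking argument (Lemma~\ref{lem:bilip-est}), and the isotopy is $h_t=\Psi\circ m_t\circ\Psi^{-1}|_{\Sigma_2}$ with $m_t(x,v)=(x,tv)$ (Corollary~\ref{cor:isotopy}), then extended to an ambient $C^1$ diffeomorphism by a cutoff (Lemma~\ref{lem:amb-diff}). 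Your ``contract the section and cut off'' picture is the right intuition, but in this regularity class it only becomes rigorous after this mollification of the normal bundle.
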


Recall that two topological embedded submanifolds $\Sigma_1,\Sigma_2$ of $\R^n$
are \emph{ambient isotopic} if and only if there exists a continuous map $H :
\R^n \times [0,1] \to \R^n$ such that
\begin{equation}
	\label{isodef}
	\begin{split}
        H_t & =  H(\cdot,t) 
        \quad\mbox{is an embedding for each $t\in [0,1]$} \,,
        \\
        H(x,1) &= x \quad\mbox{for all $x \in \R^n$,} \qquad
        \text{and} \qquad
        H(\Sigma_2 \times \{ 0 \}) = \Sigma_1 \,.
    \end{split}
\end{equation}

\noindent
(Note that the inclusion mapping $f_2:\Sigma_2\to\R^n$ yields one embedding of
$\Sigma_2$ in $\R^n$, and $f_1:=H_0\circ f_2:\Sigma_2\to\R^n$ yields another
one, so that \eqref{isodef} agrees with the definition of \emph{ambient isotopy
  $\tilde{H}:\R^n\times [0,1]\to\R^n\times [0,1]$, $\tilde{H}(x,t):=(H(x,t),t)$
  between the embeddings $f_2$ and $f_1$\/} as in Burde--Zieschang
\cite[p.2]{burde-zieschang-book}.) However, because of the smoothing properties
of all geometric curvature energies described in more detail in Section
\ref{sec:1.2} all Lipschitz submanifolds with finite energy are actually of
class $C^1$, so that Theorem \ref{thm:finiteness} is stronger than
diffeomorphism finiteness since it even bounds the $C^1$-isotopy types.

\begin{rem}
    We do not have an optimal estimate for the number $K = K(E,d,m,n,l,p)$ in
    the above theorem. However, a crude check of constants involved in the
    argument yields
    \begin{equation}
        \label{est:K-finiteness}
        \log \log K \le c(m,n,l,p) \Big( |\log d| + \log \big(E^{1/p}+1\big)+  1 \Big)
    \end{equation}
    with a constant $c(m,n,l,p)$ that blows up for $p\searrow p_0(\E)$. (See
    Section~\ref{sec:finiteness},~Remark~\ref{rem:explicit}). Thus, as expected,
    for fixed dimensions $m$ and $n$, $K$ blows up for $E\to\infty$ (with $\E$
    and $d$ fixed), and for $p\searrow p_0(\E)$ (with $E$ and $d$ fixed).
\end{rem}

It is also worth noting that no lower bounds for the volume (or lower bounds for
the injectivity radius) are needed in our work. Intuitively, the reason is that
the onset of thin tubes or narrow tentacles is being penalised by each of the
energies we consider. The same penalisation effect takes care of a quantitative
embeddedness: if two roughly parallel sheets of an embedded manifold $\Sigma$
are too close to each other preventing $\Sigma$ to be described locally as
\emph{one} graph, then there are lots of roughly regular very small simplices
with vertices on $\Sigma$ (and of small tangent spheres passing through a second
point of $\Sigma$), causing the integrands $\DC$ and $1/\Rtp$ in \eqref{def:DC}
and \eqref{def:Rtp} to be very large on a set of positive measure. We will
describe the energies' quantitative control on local graph patches more
precisely in Section \ref{sec:1.2}.

Our next result states that for any geometric curvature energy
all sublevel sets are sequentially closed and compact with respect to Hausdorff convergence,
and that all these energies are sequentially lower semicontinuous.

\begin{introthm}[\textbf{lower semicontinuity and compactness}]
    \label{thm:lsc}
    For $E,d \in (0,\infty)$ and for a geometric curvature energy $\E~\in~\{ \E_p^l, 
    \TP_p, \TP_p^G \}$, where $p
    > p_0(\E)$ and $l \in \{1,2,\ldots, m+2\}$, the following holds. 
    \begin{enumerate}
    \item[\rm (i)]
        If $\Sigma_j\in\Amn(E,d)$ for all $j\in\N$ and if the $\Sigma_j$ converge
        to a compact set $\Sigma\subset\R^n$ with respect to the Hausdorff-metric as $j\to\infty$,
        then $\Sigma\in\Amn(E,d)$, and moreover,
        \begin{gather*}
            \E(\Sigma) \le \liminf_{j\to \infty} \E(\Sigma_j).
        \end{gather*}
    \item[\rm (ii)]
        For any sequence $(\Sigma_j)_j\subset\Amn(E,d)$  
   with $0\in \Sigma_j$ for all $j$ there is a submanifold $\Sigma\in\Amn(E,d)$ and a subsequence 
        $(\Sigma_{j_k})_k\subset (\Sigma_j)_j$ such that $\HD(\Sigma_{j_k},\Sigma)\to 0$ as $k\to\infty.$
    \end{enumerate}
\end{introthm}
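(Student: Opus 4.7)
The plan is to combine the a priori $C^{1,\alpha}$ graph estimates advertised in Section~\ref{sec:1.2} with a diagonal Arzel\`a--Ascoli extraction and a Fatou-type lower semicontinuity argument. Statement~(ii) reduces to~(i): since $0 \in \Sigma_j \subset \CBall{0}{d}$, Blaschke's selection theorem furnishes a subsequence $\Sigma_{j_k}$ converging in Hausdorff distance to a compact set $\Sigma \subset \R^n$, and (i) then shows $\Sigma \in \Amn(E,d)$.

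To prove (i), I first use the energy bound $\E(\Sigma_j) \le E$ to obtain, through the a priori estimates recalled in Section~\ref{sec:1.2}, a common radius $r_0 > 0$ such that for every $x \in \Sigma_j$ the piece $\Sigma_j \cap \Ball{x}{r_0}$ is the graph, over a ball of radius $r_0$ in $T_x \Sigma_j$, of a $C^{1,\alpha}$ function $f_j^x$ with values in $(T_x \Sigma_j)^\perp$ and uniformly bounded $C^{1,\alpha}$-norm. Covering $\Sigma$ by finitely many balls of radius $r_0/4$ centred at points $y_i \in \Sigma$, approximating each $y_i$ by $y_i^{k} \in \Sigma_{j_k}$, and applying Arzel\`a--Ascoli chart by chart, I extract a further subsequence (not relabelled) along which every local graph $f_{j_k}^{y_i^k}$ converges in $C^{1,\beta}$ for any $\beta \in (0,\alpha)$. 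Matching on overlaps shows that the limit $\Sigma$ is itself a compact embedded $C^{1,\alpha}$ submanifold enjoying the same uniform graph constants; in particular $\Sigma \in \Lmn$, $\diam\Sigma \le d$, and tangent planes converge in $G(n,m)$, i.e.\ $T_{x_k}\Sigma_{j_k} \to T_x\Sigma$ whenever $\Sigma_{j_k} \ni x_k \to x \in \Sigma$.

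The $C^1$ convergence of local graphs forces uniform convergence of the area Jacobians on compact subsets of the charts, so that the Hausdorff measures restricted to $\Sigma_{j_k}$ converge weakly as Radon measures to the Hausdorff measure of $\Sigma$, and more generally the product measures $\HM^{ml}\llcorner\Sigma_{j_k}^l$ converge weakly to $\HM^{ml}\llcorner\Sigma^l$ for every $l \in \{1,\ldots,m+2\}$. Since $\DC$ is continuous on its natural domain, and $\Rtp^{-1}$ is continuous wherever the tangent plane is defined and varies continuously (as it does along our subsequence), the integrands in \eqref{def:epl}, \eqref{def:TP} and \eqref{def:TPG} are lower semicontinuous under the joint convergence obtained: for the $\E_p^l$ energies, given $x_i^k \in \Sigma_{j_k}$ with $x_i^k \to x_i \in \Sigma$ for $i = 0,\ldots,l-1$ and a near-maximising tuple $(x_l,\ldots,x_{m+1}) \in \Sigma^{m+2-l}$, approximating each $x_j$ for $j \ge l$ by points in $\Sigma_{j_k}$ and using continuity of $\DC$ yields
\[
\liminf_{k\to\infty} \sup_{y_l,\ldots,y_{m+1} \in \Sigma_{j_k}} \DC(x_0^k,\ldots,x_{l-1}^k,y_l,\ldots,y_{m+1})^p \;\ge\; \DC(x_0,\ldots,x_{m+1})^p,
\]
and taking the supremum over $(x_l,\ldots,x_{m+1})$ establishes the pointwise inequality. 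Combined with the measure convergence, Fatou's lemma then produces $\E(\Sigma) \le \liminf_{k\to\infty} \E(\Sigma_{j_k}) \le E$; the energies $\TP_p$ and $\TP_p^G$ are treated identically, using the tangent-plane convergence in place of continuity of $\DC$.

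The main obstacle lies in the first step: the a priori $C^{1,\alpha}$ estimates naturally have centres on each $\Sigma_j$, and one must transfer them to a coherent family of charts around points of the limit set $\Sigma$, carefully controlling the dependence of $f_j^x$ and $T_x\Sigma_j$ on the base point and patching the local limits into a single manifold. Once the $C^1$ convergence of graphs and tangent planes is in place, the measure convergence and the Fatou step are essentially routine.
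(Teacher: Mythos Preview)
Your outline is broadly workable, but it differs from the paper's route and contains one genuine imprecision that needs fixing.

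The paper does not argue via weak convergence of the measures $\HM^{ml}\llcorner\Sigma_{j_k}^l$. Instead, having built in Section~\ref{sec:4} global bilipschitz $C^1$-diffeomorphisms $J_j:\R^n\to\R^n$ with $J_j(\Sigma)=\Sigma_j$ and $\bilip(J_j)\to 1$ (Lemma~\ref{lem:amb-diff}), it changes variables once and for all to rewrite every $\E(\Sigma_j)$ as an integral over the \emph{fixed} product $\Sigma^l$ against the fixed measure $\mu=\HM^{ml}\llcorner\Sigma^l$, up to a density factor tending uniformly to~$1$ (Lemma~\ref{lem:density}). The pointwise comparison of integrands is then done quantitatively via Lemmas~\ref{lem:DC-conv} and~\ref{lem:Rtp-conv}, which replace your appeals to continuity of $\DC$ and tangent-plane convergence by explicit bilipschitz perturbation estimates; Fatou is applied in its textbook form, over a single measure. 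The case $l=1$ requires a separate diagonal cut-off because the quantitative estimate degenerates when $\diam T=0$; your direct pointwise lower-semicontinuity argument for the supremum integrand sidesteps this, which is a real advantage of your approach.

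The gap is the sentence ``Combined with the measure convergence, Fatou's lemma then produces\ldots''. Fatou's lemma needs a fixed measure; weak convergence $\mu_k\rightharpoonup\mu$ together with a joint pointwise liminf for varying integrands $g_k$ does \emph{not} in general give $\int g\,d\mu\le\liminf\int g_k\,d\mu_k$. What does work---and what you are implicitly aiming at when you mention uniform convergence of the area Jacobians---is to pull everything back to a fixed parameter domain: take a partition of unity on $\Sigma^l$ subordinate to products of your charts, parametrise each $\Sigma_{j_k}$-piece over the same ball in $T_{y_i}\Sigma$ via the converging graph maps, and apply Fatou there with Lebesgue measure, absorbing the Jacobians as uniformly convergent factors. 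This is exactly the scheme (a)--(c) that the paper's remark at the start of Section~\ref{sec:5.2} attributes to earlier work for the supremum-free integrands; with your pointwise argument for the supremum it goes through for all the energies. So your plan can be completed, but the Fatou step as currently phrased is not correct.
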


With the energies' quantitative control over local graph patches described in
more detail in Section \ref{sec:1.2} one actually obtains $C^1$-compactness of
these graph patches, which considerably improves the Hausdorff-convergence to
$C^1$-convergence in both parts of Theorem \ref{thm:lsc}. Since, roughly
speaking, isotopy types stabilise under $C^1$-convergence we can use Theorem
\ref{thm:lsc} to deduce the following existence result by means of the direct
method in the calculus of variations.

\begin{introcor}[\textbf{existence of minimisers in isotopy classes}]
    \label{thm:minimizer}
    Let $\E$, $p$, $E$ and $d$ be as in Theorem~\ref{thm:lsc}. 
    For each reference
    manifold $M_0 \in \Amn(E,d)$ there exists $\Sigma_0 \in \Amn(E,d)$ such that
    \begin{gather*}
        \E(\Sigma_0) = \inf \left\{
            \E(\Sigma) : \Sigma \in \Amn(E,d) 
            \ \text{and}\ \Sigma \text{ is ambient isotopic to } M_0
        \right\} \,.
    \end{gather*}
\end{introcor}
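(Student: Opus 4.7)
The plan is a standard direct-method argument in the calculus of variations, with the only non-trivial ingredient being that the ambient isotopy class is preserved in the limit. Fix a reference manifold $M_0\in\Amn(E,d)$ and denote its ambient isotopy class inside $\Amn(E,d)$ by $\mathcal{I}(M_0)$. Since $M_0\in\mathcal{I}(M_0)$, the infimum
\begin{gather*}
    I := \inf\{\E(\Sigma)\,:\,\Sigma\in\mathcal{I}(M_0)\} \le \E(M_0) \le E
\end{gather*}
is finite, so I pick a minimising sequence $(\Sigma_j)_j\subset\mathcal{I}(M_0)$ with $\E(\Sigma_j)\to I$.

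Next I normalise the position of the members of the sequence: for each $j$ choose $v_j\in -\Sigma_j$ and replace $\Sigma_j$ by $\tilde\Sigma_j:=\Sigma_j+v_j$, so that $0\in\tilde\Sigma_j$. Because the straight-line homotopy $(x,t)\mapsto x+tv_j$ is an ambient isotopy of $\R^n$, each $\tilde\Sigma_j$ is still ambient isotopic to $M_0$; since $\E$ and $\diam$ are translation-invariant, $\tilde\Sigma_j\in\Amn(E,d)$ and $\E(\tilde\Sigma_j)=\E(\Sigma_j)\to I$. Theorem~\ref{thm:lsc}~(ii) now supplies a subsequence, which I still denote by $(\tilde\Sigma_j)_j$, and a limit $\Sigma_0\in\Amn(E,d)$ with $\HD(\tilde\Sigma_j,\Sigma_0)\to 0$. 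Part (i) of the same theorem yields
\begin{gather*}
    \E(\Sigma_0) \le \liminf_{j\to\infty}\E(\tilde\Sigma_j) = I,
\end{gather*}
so $\Sigma_0$ is an energy-minimising candidate; it only remains to verify $\Sigma_0\in\mathcal{I}(M_0)$.

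This last step is the one that requires the \emph{geometric rigidity} highlighted in Section~\ref{sec:1.1}: submanifolds of uniformly bounded energy in $\Amn(E,d)$ that are sufficiently Hausdorff-close are automatically ambient isotopic (the underlying mechanism is that finite energy gives uniform $C^{1,\alpha}$-graph patches, hence Hausdorff-closeness upgrades to $C^1$-closeness, and $C^1$-close embedded submanifolds are ambient isotopic via a tubular-neighbourhood retraction). Applying this principle to the pair $(\tilde\Sigma_j,\Sigma_0)$ for $j$ large enough that $\HD(\tilde\Sigma_j,\Sigma_0)$ is below the threshold produced by the energy bound $E$, I conclude that $\tilde\Sigma_j$ and $\Sigma_0$ are ambient isotopic. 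Transitivity of ambient isotopy then gives $\Sigma_0\in\mathcal{I}(M_0)$, and combined with $\E(\Sigma_0)\le I$ this forces equality.

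The only real obstacle is the isotopy-stability step in the last paragraph: one has to know, and use, that the quantitative $C^{1,\alpha}$ a priori estimates associated with finite geometric curvature energy turn Hausdorff-closeness of two elements of $\Amn(E,d)$ into ambient isotopy. Once this ingredient is available (it is one of the main tools of the paper, stated in Section~\ref{sec:1.2}), the rest of the argument is the routine pattern \emph{minimising sequence $\Rightarrow$ translate $\Rightarrow$ compactness $\Rightarrow$ lower semicontinuity $\Rightarrow$ class preservation}.
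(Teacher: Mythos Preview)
Your proof is correct and follows essentially the same direct-method argument as the paper: choose a minimising sequence in the isotopy class, extract a Hausdorff-convergent subsequence via compactness (the paper unpacks this as Regularity Theorem plus Theorem~\ref{thm:compactness}, you invoke Theorem~\ref{thm:lsc}(ii) directly, which amounts to the same thing), use lower semicontinuity for the energy inequality, and use the isotopy-stability of $\Cmn(R,L,d)$ (the paper cites Theorem~\ref{thm:diff}, your description matches Corollary~\ref{cor:isotopy}) to keep the limit in the isotopy class. Your explicit handling of the translation normalisation is in fact slightly more careful than the paper's own write-up.
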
  

\begin{rem*}
    It is easy to see, via simple covering arguments, that
    Theorems~\ref{thm:minimizer} and \ref{thm:lsc} hold under another set
    of assumptions, with the diameter bounds replaced by volume bounds,
    i.e. with classes $\Amn(E,d)$ replaced by 
    \begin{gather*} 
        \tAmn (E,H):= \{\Sigma\in \Lmn \colon 
        \quad \E(\Sigma)\le E, \; \HM^m(\Sigma)\le H\}\, .
    \end{gather*}
(For $\Sigma$'s with $\E(\Sigma)\le E$, the diameter bounds are equivalent to volume bounds).
\end{rem*}             

\smallskip

There are numerous papers in the literature dealing with compactness
and finiteness results for \emph{immersed} manifolds, starting with J.
Langer \cite{Lan85} who considers immersed surfaces in $\R^3$ of class
$W^{2,p}$ for $p>2$, with $L^p$ bounds on the second fundamental form;
for a generalization to immersed hypersurfaces in $\R^n$ see \cite{Del01}.
G.~Smith \cite{Smi07,Smi12} considers compactness of immersed complete
submanifolds of class $C^{k,\alpha}$ with $k\ge 2$, $\alpha\in (0,1)$,
assuming uniform bounds on the second fundamental forms (and their
derivatives). Recently, P.~Breuning \cite{Bre12} has studied
compactness for a wide class of $(r,\lambda)$--immersions, i.e., $C^1$
immersions that can be represented as $\lambda$-Lipschitz graphs at a
uniform length scale~$r$.

Our work differs from all these papers in that we deal only with \emph{embedded}
objects.  The upper bounds on any of the geometric curvature energies we
consider do guarantee that the limit of a convergent sequence of submanifolds --
even if the convergence, a priori, takes place only in the Hausdorff distance --
is again an {embedded} submanifold; this is due to the penalisation effects
mentioned before. It is easy to see that under the assumptions of \cite{Lan85},
\cite{Smi07} or \cite{Bre12} a sequence of embedded submanifolds might converge
to a limit which is only immersed, not embedded.

The only comparable result
that we are aware of is due to O.~Durumeric:~\cite[Thm.~2]{Dur02} ascertains
that there are only finitely many diffeomorphism and isotopy classes of
connected $C^{1,1}$ manifolds with lower bounds on a very specific functional,
namely on the normal injectivity radius $r_{ni}$, combined roughly speaking with
bounds on volume and diameter.
Our estimate \eqref{est:K-finiteness} on the number of isotopy types is similar
in spirit to Durumeric's \cite[Sec.~5]{Dur02} where $E^{1/p}$ (which controls
the \emph{bending} of $\Sigma$ in a single coordinate chart,
cf. Section~\ref{sec:1.2} below) is replaced by the inverse of the injectivity
radius. (In \cite{Nab95}, A. Nabutovsky studies the intriguing `energy
landscape' of $\E(\Sigma)=\textrm{vol}^{1/n}(\Sigma)/r_{ni}(\Sigma)$ on the set
of $C^{1,1}$ topological hyperspheres $\Sigma=\Sigma^n\subset \R^{n+1}$; in
particular his energy $\E$ has infinitely many distinct local minima.)

Let us note that for \emph{curves} in $\R^3$, J. O'Hara, see
\cite[pp. 1340--1343]{OWR-2013},
mentions a few results that use the same analytic mechanism of proof that we
deal with: sequences of knots $f\colon \S^1\to\R^3$ that are uniformly
bilipschitz and remain bounded in a fixed $C^{1,\alpha}$ space, are precompact
in $C^1$; moreover, the knot class has to be preserved in the limit. A bound for
the bilipschitz constant of $f$ and for its $C^{1,\alpha}$-norm translates into
a bound on the number of possible knot types parameterised by $f$. For $m=1$,
all our energies are valid knot energies of that type: upon fixing the length,
upper bounds on the energy yield bilipschitz and $C^{1,\alpha}$ bounds on the
arc-length parameterisations of knots, resulting in bounds on the number of knot
classes, on the average crossing number, stick number etc.; see \cite{SSvdM13}
for more details. The results of the present paper open several questions in
higher dimensional geometric knot theory, concerning, e.g., the possible
relations between bounds on the energies of `knot conformations' and bounds on
higher dimensional knot invariants.

\subsection{The strategy of proofs and more general results 
  for $\boldsymbol{C^{1,\alpha}}$-submanifolds}\label{sec:1.2}
Let us start by explaining why we assume integrability 
above scale-invariance for
each of the geometric curvature energies.
A simple scaling argument shows that if $\Sigma\subset
\R^n=\R^{n-1}\times \R$ is a cone over an $(m-1)$-dimensional smooth
manifold $\Sigma_0\subset \R^{n-1}$, with vertex $v=(0,\ldots,0,1)$ and
$\E \in \{ \E_p^l, \TP_p, \TP_p^G \}$, then $\E(\Sigma)=\infty$
whenever $p\ge p_0(\E)$. In fact, for such $p$ the sequence of energies
of disjoint pieces of $\Sigma$,
\begin{gather*}
    \epsilon_j
    :=\E\big{(}\Sigma \cap 
    \{x\in \R^n : \quad 2^{-j-1}<|x-v|\le 2^{-j}\}\big{)}, 
    \qquad j=0,1,2,\ldots,
\end{gather*}
is nondecreasing (by scaling!), and we have $\E(\Sigma)\ge
\epsilon_0+\epsilon_1+\cdots$.

Intuitively, each fold or cusp of $\Sigma$ should introduce even `more'
small simplices (or small tangent spheres that contain another point of
$\Sigma$), and thereby should lead to an increase of energy. This
strongly suggests that for any $p>p_0(\E)$ the functional $\E$ should have
nice smoothing properties. This is indeed true; if $\E(\Sigma)$ is
finite for some $p$ above the critical exponent $p_0(\E)$, then
$\Sigma$ is an embedded manifold of class $C^{1,\alpha}$. Moreover,
$\Sigma$ can be assembled from a finite number of \emph{standard graph
  patches} -- corresponding to the \emph{standard pieces\/} in Cheeger's words
quoted in Section \ref{sec:1.1} above 
-- with the size and the graph norms (controlling how much
$\Sigma$ can bend at length scales determined by the energy) explicitly
controlled in terms of $\E(\Sigma)$. This is the reason why we can 
obtain compactness and finiteness results, along with semicontinuity 
of all these energies. 

Here is a precise description of what we mean by \emph{standard graph patches}.

For $\alpha\in (0,1]$, let $\Cmn$ denote the set of all compact,
$C^{1,\alpha}$-smooth, $m$-dimensional embedded submanifolds of~$\R^n$.

\begin{introdefin}[\textbf{$\boldsymbol{C^{1,\alpha}}$-graph patches}]
    \label{class-definition}
    For $R>0, L>0$, $d>0$, and $\alpha\in (0,1]$ we define $\Cmn(R,L,d)$ to be
    the class of those submanifolds $\Sigma\in \Cmn$ that satisfy the following
    three conditions:
    \begin{enumerate}[label=(\roman*), ref=(\roman*)]
    \item \label{cd:i:diam-bounds} \textit{diameter bounds:} 
        $\Sigma\subset \CKugel^n(0,d) ; $ 
    \item \label{cd:i:graph-patches}
        \textit{size of graph patches:} for each point 
        $x\in \Sigma$ there exists a function $f_x : 
        T_x\Sigma \to T_x\Sigma^{\perp}$ of class
        $C^{1,\alpha}$ such that $\Sigma \cap \Ball{x}{R} = 
        (x + \graph(f_x)) \cap \Ball{x}{R}$,
        $f_x(0)=0$, and $Df_x(0)=0$;
    \item \label{cd:i:ctrl-bending}
        \textit{controlled bending:} 
        for each $x \in \Sigma$, we have 
        $\|Df_x(\xi)-Df_x(\eta)\| \le L|\xi-\eta|^\alpha$ 
        for all $\xi,\eta\in T_x\Sigma$, and $\lip(f_x) \le 1$.
    \end{enumerate}
\end{introdefin}

We state below -- in a version that is adapted for our needs in this paper -- a
general regularity result which has been proved in our earlier works, see
\cite{stvdm-JGA} for the case of $\TP_p$, \cite{KStvdM-GAFA} for $\TP_p^G$, and
\cite{Kol12} for all the $\E_p^l$-energies, $l=1,\ldots,m+2$. (The case of
$\E_p^{m+2}$ for $m=2$, $n=3$ dates back to \cite{SvdM11a}; for curves in
$\R^n$, see also \cite{SSvdM10}, \cite{SSvdM09} and \cite{SvdM07}).

\begin{introthm*} 
    Fix $\E \in \{ \E_p^l, \TP_p, \TP_p^G \}$ and $p > p_0(\E)$. Assume that a
    Lipschitz manifold $\Sigma \in \Lmn$ satisfies $\E(\Sigma) \le E <
    \infty$. If $\Sigma\subset \Kugel^n(0,d)$, 
    then $\Sigma \in \Cmn(R,L,d)$ for the exponent $\alpha = 1-p_0(\E)/p\in
    (0,1)$, with $R$ and $L$ depending only on $m,n,l, p,p_0$ and $E$. In fact, one
    can take
    \begin{gather} 
        \label{RLfromEp} R=c_1(m,n,l,p) E^{-1/(p-p_0(\E))}\, ,
        \qquad L = c_2(m,n,l,p) E^{1/p} 
    \end{gather} 
    for some constants $c_1$ and $c_2$ depending only on $m,n,l, $ and $p$. 
\end{introthm*}

Let us note here that according to Blatt and Kolasi\'{n}ski \cite{BK12}, for an
$m$-dimensional embedded $C^1$-submanifold $\Sigma\subset \R^n$ and
$p_1=m(l-1)<p$ (note that $p_0(\E_p^l)=ml>p_1$) the condition
$\E^l_p(\Sigma)<\infty $ is \emph{equivalent} to $\Sigma$ being locally a graph
of a~function in the fractional Sobolev space $W^{1+s,p}$ with $s=1-m(l-1)/p\in
(0,1)$. In combination with the Sobolev imbedding, this implies that the
exponent $\alpha$ in the Regularity Theorem (which is one of the key technical
tools for the present paper) is \emph{best possible}.  Moreover, there are
$m$-dimensional graphs in $\R^n$ with finite curvature energy $\E^l_p$ for which
the graph function is \emph{nowhere} twice differentiable, so we definitely
cannot work with classic curvatures in our setting!

Nevertheless, the Regularity Theorem paves the way to our results on
compactness, finiteness and semicontinuity of geometric curvature energies with
respect to sequences $\Sigma_j\subset\Amn(E,d)$, but also in the more general
subclass of $\Cmn$ introduced in Definition \ref{class-definition}.  The key
idea is the following: due to the regularity estimates, energy and diameter
bounds present in the definition of $\Amn(E,d)$ force all the $\Sigma_j$ to be
in the same, \emph{fixed}, class $\Cmn(R,L,d)$ up to translations.
Then, the \emph{controlled bending} condition satisfied by all the $\Sigma_j$
enters the scene: after a technical preparation involving some \emph{graph
  tilting}, it enables applications of the Arzel\`{a}--Ascoli compactness
theorem in all graph patches. Thus, in fact much more can be said about the
convergence of $\Sigma_j$, at least along a subsequence. Let us make this more
precise.

\begin{introdefin}[\textbf{$\boldsymbol{C^{1,\alpha}}$-convergence 
      of graph patches}]
    \label{def:C1a-conv}
    A sequence $(\Sigma_j)_{j\in\N} \subset \Cmn$ is said 
    to \emph{converge in~$\Cmn$} to
    the set $\Sigma_0 \subset \R^n$ if
    \begin{enumerate}[label=(\roman*), ref=(\roman*)]
    \item
        \label{C1a:i:HD-conv}
        $\HD(\Sigma_j,\Sigma_0) \xrightarrow{j \to \infty} 0$;
    \item
        \label{C1a:i:smoothness} 
        $\Sigma_0$ is a $C^{1,\alpha}$-smooth embedded submanifold of $\R^n$;
    \item
        \label{C1a:i:param-conv}
        there is an index $j_0 \in \N$ and a radius $\rho > 0$ such that for
        each $x \in \Sigma_0$ and for each $j \in \N$ with $j \ge j_0$ or $j=0$
        there exists a function $f_{x,j} \in
        C^{1,\alpha}(T_{x}\Sigma_0,T_{x}\Sigma^{\perp}_0)$ such that
        \begin{gather*}
            \Sigma_j \cap \Ball{x}{\rho} 
            = (x + \graph(f_{x,j})) \cap \Ball{x}{\rho}
        \end{gather*} 
        and
        \begin{gather*}
            \| f_{x,j} - f_{x,0} \|_{C^{1,\alpha'}(T_x\Sigma_0,T_x\Sigma_0^\perp)} 
            \xrightarrow{j \to \infty} 0 
            \qquad\mbox{for each $ \alpha' \in (0,\alpha)$.}
        \end{gather*}
    \end{enumerate}
\end{introdefin}

As suggested above and mentioned in Section \ref{sec:1.1}, the following
stronger compactness result on $\Cmn(R,L,d)$ holds, from which part (ii) of
Theorem \ref{thm:lsc} follows directly, but which is also essential to prove the
other results stated in Section \ref{sec:1.1}.

\begin{introthm}[\textbf{compactness}]
    \label{thm:compactness}
    Let $R,L,d \in (0,\infty)$ and $\alpha \in (0,1]$. 
    Any sequence of  sub\-mani\-folds
    $(\Sigma_j)_{j\in\N} \subset \Cmn(R,L,d)$ contains a~subsequence which
    converges in~$\Cmn$ to some submanifold $\Sigma_0 \in \Cmn(R,L,d)$.
\end{introthm}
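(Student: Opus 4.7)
The plan is to first extract Hausdorff-convergent subsequences from the uniform diameter bound, and then to upgrade this to $C^{1,\alpha'}$-convergence of local graph parametrisations by an Arzel\`a--Ascoli argument, preceded by a \emph{graph-tilting} step that replaces the moving tangent planes $T_{x_j}\Sigma_j$ by a common limit plane. Because every $\Sigma_j \subset \CKugel^n(0,d)$, Blaschke's selection theorem yields a subsequence (not relabelled) converging in Hausdorff distance to a compact set $\Sigma_0 \subset \CKugel^n(0,d)$. Fix $x_0 \in \Sigma_0$ and choose $x_j \in \Sigma_j$ with $x_j \to x_0$. Since the Grassmannian $G(n,m)$ is compact, passing to a further subsequence we may assume $V_j := T_{x_j}\Sigma_j \to V_0 \in G(n,m)$. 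By property (ii) of the class, $\Sigma_j \cap \Ball{x_j}{R}$ is the graph over $V_j$ of a $1$-Lipschitz $C^{1,\alpha}$ map $f_{x_j,j}$ vanishing to first order at $0$, with $\|Df_{x_j,j}\|_{C^{0,\alpha}} \le L$.

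\textbf{Graph tilting.} The key technical step is to re-express each $\Sigma_j \cap \Ball{x_j}{r_0}$, for some radius $r_0 = r_0(R,L) > 0$ independent of $j$, as a graph over the limit plane $V_0$. Because $V_j \to V_0$ and $\lip(f_{x_j,j}) \le 1$, the orthogonal projection $\pi_0$ onto $V_0$ restricts, on $\graph(f_{x_j,j})$, to a bi-Lipschitz map onto a neighbourhood of $\pi_0(x_j)$ in $V_0$ of radius at least $r_0$. A quantitative implicit-function/fixed-point argument then produces tilted parametrisations $\tilde f_j : V_0 \supset U_j \to V_0^\perp$ whose graphs coincide with $\Sigma_j \cap \Ball{x_j}{r_0}$, and whose $C^{1,\alpha}$ norms are bounded by a constant $L' = L'(L,\alpha)$ obtained from the chain rule applied to the near-identity change of coordinates $V_j \to V_0$. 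Moreover, because $x_j \to x_0$ and $V_j \to V_0$, both $|\tilde f_j(0)|$ and $\|D\tilde f_j(0)\|$ tend to~$0$.

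\textbf{Arzel\`a--Ascoli and conclusion.} The family $\{\tilde f_j\}$ is uniformly bounded in $C^{1,\alpha}$ on a fixed ball in $V_0$, hence relatively compact in $C^{1,\alpha'}$ for every $\alpha' \in (0,\alpha)$. A further subsequence thus converges to a limit $f_{x_0,0}$ with $\|Df_{x_0,0}\|_{C^{0,\alpha}} \le L$ by lower semicontinuity of the H\"older seminorm under uniform convergence of derivatives. Its graph over $V_0$ coincides with $\Sigma_0 \cap \Ball{x_0}{r_0}$ by the Hausdorff convergence, so $\Sigma_0$ is $C^{1,\alpha}$-smooth near $x_0$ and $V_0 = T_{x_0}\Sigma_0$. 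Applying this local argument to a countable dense subset of $\Sigma_0$ and diagonalising over the subsequences (using compactness of $\Sigma_0$ to control a finite cover at each stage) yields convergence in the sense of Definition~\ref{def:C1a-conv}; the three defining properties of $\Cmn(R,L,d)$ for the limit then follow from the uniform constants.

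\textbf{Main obstacle.} The central difficulty is the graph-tilting step: one must prove that the radius $r_0$ of the tilted representation does not collapse along the sequence and that the $C^{0,\alpha}$-seminorm of $D\tilde f_j$ remains bounded by a constant depending only on $R$, $L$ and $\alpha$. The Lipschitz bound $\lip(f_{x,j}) \le 1$ built into property (iii) is what prevents $\pi_0\lvert_{\graph(f_{x_j,j})}$ from degenerating, and the quantitative H\"older bending bound is what survives the chain rule through the change of planes; without either of these a priori controls the argument would fail.
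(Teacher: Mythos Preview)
Your proposal is correct and follows essentially the same route as the paper: Blaschke selection, graph tilting onto a common plane, Arzel\`a--Ascoli, and a diagonal argument. The paper's execution differs in one useful refinement: instead of invoking Grassmannian compactness to make $T_{x_j}\Sigma_j$ converge, the paper proves a quantitative \emph{proximity-of-tangent-planes} lemma (Lemma~\ref{lem:hd-angle}) showing that $\ang(T_{x_j}\Sigma_j,T_{x_l}\Sigma_l)\le C\,\HD(\Sigma_j,\Sigma_l)^{\alpha/2}$ directly from membership in $\Cmn(R,L,d)$; this makes the tangent planes Cauchy without passing to a further subsequence. The payoff comes in the second step: once $\Sigma_0\in\Cmn(R,L,d)$ is established, the same lemma yields explicit $C^1$-convergence rates of the local parametrisations in terms of $\HD(\Sigma_j,\Sigma_0)$, so the \emph{entire} (already Blaschke-thinned) sequence converges at \emph{every} $x\in\Sigma_0$ with a uniform $j_0$ and $\rho$, exactly as Definition~\ref{def:C1a-conv} requires. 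Your diagonalisation over a countable dense subset would also work, but you should note that Definition~\ref{def:C1a-conv} asks for convergence at all $x\in\Sigma_0$, so you would still need to transfer the graph convergence from the dense centres to arbitrary ones via one more tilting; the paper's quantitative route sidesteps this.
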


The convergence in $\Cmn$ is strong enough to make 
all the $\Sigma_j$ with $j$ large enough 
ambient isotopic to the limiting manifold $\Sigma_0$. 
It also turns out that there is a diffeomorphism of the 
ambient space $J_j: \R^n \to \R^n$ which is close to the identity 
in the bilipschitz sense and maps $\Sigma_j$
to $\Sigma_0$.

\begin{introthm}[\textbf{isotopy and diffeomorphism of ambient space}] 
	\label{thm:diff}
    Let $R,L,d \in (0,\infty)$, $\alpha \in (0,1]$ and let
    $(\Sigma_j)_{j\in\N} \subset \Cmn(R,L,d)$ be a~sequence of submanifolds
    which converges in~$\Cmn$ to~$\Sigma_0 \in \Cmn(R,L,d)$. Then there
    exists $j_0 \in \N$ such that for each $j \ge j_0$ the manifolds
    $\Sigma_j$ and $\Sigma_0$ are ambient isotopic. Moreover, for each $j
    \ge j_0$ there exists a diffeomorphism of the ambient space $J_j : \R^n
    \to \R^n$ such that
    \begin{gather*} 
        J_j(\Sigma_j) = \Sigma_0 \quad \text{and} \quad
        \bilip(J_j) \le 1 + C_J \HD(\Sigma_0,\Sigma_j)^{\alpha/2} \,,
    \end{gather*} 
    where $C_J = C_J(R,L,\alpha,m,n)$.
\end{introthm}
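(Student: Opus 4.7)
My plan is to push $\Sigma_j$ onto $\Sigma_0$ inside a tubular neighbourhood of $\Sigma_0$ via its normal bundle, and to extend the resulting map by the identity outside. The uniform graph-patch size \ref{cd:i:graph-patches} together with the controlled bending \ref{cd:i:ctrl-bending} implies a positive lower bound $r_0 = r_0(R,L,\alpha,m,n)>0$ on the \emph{reach} of $\Sigma_0$, so that the nearest-point projection $\pi_0$ is a well-defined $C^{1,\alpha'}$-retraction of the tube $U_0 := \{y \in \R^n : \dist(y,\Sigma_0) < r_0\}$ onto $\Sigma_0$. Write each $y \in U_0$ in tubular coordinates $y = x + v$ with $x = \pi_0(y) \in \Sigma_0$ and $v \in T_x\Sigma_0^\perp$. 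By \ref{C1a:i:HD-conv}, $\Sigma_j \subset U_0$ for $j$ large; by the chartwise $C^{1,\alpha'}$ convergence in \ref{C1a:i:param-conv}, the tangent planes $T_y\Sigma_j$ are uniformly close to $T_{\pi_0(y)}\Sigma_0$, so $\pi_0\vert_{\Sigma_j} : \Sigma_j \to \Sigma_0$ is a $C^{1,\alpha'}$-diffeomorphism. This yields a section $\Psi_j$ of the normal bundle $N\Sigma_0$ with
\begin{equation*}
    \Sigma_j \;=\; \{\, x + \Psi_j(x) : x \in \Sigma_0 \,\},
    \qquad \|\Psi_j\|_\infty \le C\, \HD(\Sigma_0,\Sigma_j) \;=:\; C\delta_j,
\end{equation*}
while the uniform $C^{1,\alpha}$ bounds inherited from $\Cmn(R,L,d)$ give $[D\Psi_j]_{C^{0,\alpha}} \le C(R,L,\alpha,m,n)$.

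Next I fix a smooth cutoff $\phi : [0,\infty) \to [0,1]$ with $\phi \equiv 1$ on $[0,1/3]$ and $\phi \equiv 0$ on $[2/3,\infty)$, and set
\begin{equation*}
    J_j(y) \;:=\; y - \phi(|v|/r_0)\, \Psi_j(x) \quad \text{for } y = x+v \in U_0,\qquad J_j(y) := y \quad \text{for } y \in \R^n \setminus U_0.
\end{equation*}
For $j$ large, $\|\Psi_j\|_\infty < r_0/3$, so any $y = x + \Psi_j(x) \in \Sigma_j$ satisfies $\phi(|\Psi_j(x)|/r_0) = 1$ and is mapped to $x \in \Sigma_0$; since $\pi_0\vert_{\Sigma_j}$ is bijective, $J_j(\Sigma_j) = \Sigma_0$. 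The standard Hölder interpolation inequality
\begin{equation*}
    \|D\Psi_j\|_\infty \;\le\; C\, \|\Psi_j\|_\infty^{\alpha/(1+\alpha)}\, [D\Psi_j]_{C^{0,\alpha}}^{1/(1+\alpha)} \;\le\; C(R,L,\alpha,m,n)\, \delta_j^{\alpha/2},
\end{equation*}
where the last step uses $\alpha/(1+\alpha) \ge \alpha/2$ and $\delta_j < 1$, combined with the bound $\|d\pi_0\|_\infty \le C(r_0)$, yields
\begin{equation*}
    \|DJ_j - I\|_\infty \;\le\; C\Bigl(\tfrac{1}{r_0}\|\Psi_j\|_\infty + \|D\Psi_j\|_\infty\Bigr) \;\le\; C_J\, \delta_j^{\alpha/2}
\end{equation*}
uniformly on $\R^n$. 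For $j$ large, $DJ_j$ is therefore everywhere invertible; since $J_j$ equals the identity outside the bounded tube $U_0$, a standard global-inverse argument ensures that $J_j$ is a diffeomorphism of $\R^n$ with $\bilip(J_j) \le 1 + C_J\, \delta_j^{\alpha/2}$.

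For the ambient isotopy, define $H_j(y,t) := y - t\phi(|v|/r_0)\Psi_j(x)$ on $U_0$ and $H_j(y,t) := y$ outside. The estimate above applied to $t\Psi_j$ in place of $\Psi_j$ shows that $H_j(\cdot,t)$ is a diffeomorphism for every $t \in [0,1]$, with $H_j(\cdot,0) = \Id$ and $H_j(\cdot,1) = J_j$; a reparametrisation in $t$ and passage to inverses then produces a map $H : \R^n \times [0,1] \to \R^n$ satisfying \eqref{isodef} between $\Sigma_j$ and $\Sigma_0$. The main technical obstacle I expect is in Step 2: upgrading the chartwise $C^{1,\alpha'}$ convergence of Definition \ref{def:C1a-conv}, which is phrased over the tangent spaces $T_x\Sigma_0$, to a \emph{global} normal-graph representation $\Psi_j$ over $\Sigma_0$ with uniform $C^0$-size $\|\Psi_j\|_\infty \le C\delta_j$. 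This requires quantifying the tilt between $T_y\Sigma_j$ and $T_{\pi_0(y)}\Sigma_0$, which in turn uses both the reach estimate from the first paragraph and the uniform Hölder modulus of $Df_{x,0}$ supplied by \ref{cd:i:ctrl-bending}.
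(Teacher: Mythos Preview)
Your overall strategy --- represent $\Sigma_j$ as a section of a tubular neighbourhood of $\Sigma_0$, slide it down to $\Sigma_0$ with a cutoff, and read off the bilipschitz constant from an interpolation inequality --- is the same geometric picture the paper uses. The crucial difference, and the place where your argument breaks, is the choice of tubular neighbourhood.

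You assert that conditions~\ref{cd:i:graph-patches} and~\ref{cd:i:ctrl-bending} yield a positive lower bound on the \emph{reach} of $\Sigma_0$, so that the nearest-point projection $\pi_0$ is a $C^{1,\alpha'}$ retraction. This is false for $\alpha<1$. The graph of $t\mapsto |t|^{1+\alpha}$ in $\R^2$, restricted to a small interval, is a one-dimensional member of $\Cmn(R,L,d)$ for suitable $R,L,d$, yet its curvature blows up like $|t|^{\alpha-1}$ as $t\to 0$, so its reach is zero. More generally, positive reach is equivalent to $C^{1,1}$ regularity in the sense of Federer, which is exactly the case $\alpha=1$ that the theorem does \emph{not} restrict to. Even if the reach happened to be positive, the nearest-point projection onto a $C^{1,\alpha}$ manifold is only Lipschitz, not $C^1$: your tubular coordinates $y=x+v$ with $v\in T_x\Sigma_0^\perp$ are built from the map $x\mapsto (T_x\Sigma_0)^\perp_\natural$, which is merely $C^\alpha$, so the map $J_j$ you write down is not $C^1$ and the differential $DJ_j$ you estimate need not exist.

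The paper handles precisely this obstacle by replacing the honest normal bundle with a mollified one. Lemma~\ref{lem:mollify} produces, for each $\varepsilon>0$, a $C^1$ map $\Phi:\Sigma_0\to G(n,n-m)$ with $\|\Phi(x)-(T_x\Sigma_0)^\perp_\natural\|\le\varepsilon$ and $\lip(\Phi)\le C\varepsilon^{-1/\alpha}$; Lemma~\ref{lem:tubular} then builds a $C^1$ tubular neighbourhood using the fibres $\im\Phi(x)$ instead of $T_x\Sigma_0^\perp$. The projection along these $\varepsilon$-normal directions (Lemma~\ref{lem:bilip-est}) plays the role of your $\pi_0$, and the Lipschitz bound on $\Phi$ is exactly what makes the resulting $J_j$ a $C^1$ map with controlled bilipschitz constant (Lemma~\ref{lem:amb-diff}). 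The $\alpha/2$ in the final estimate arises not from H\"older interpolation but from the angle estimate of Lemma~\ref{lem:hd-angle}, which compares tangent planes at scale $\HD(\Sigma_0,\Sigma_j)^{1/2}$.

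If you restrict to $\alpha=1$ your argument is essentially correct and somewhat cleaner than the paper's; for $\alpha<1$ you need the mollification step, and once you have it the rest of your outline goes through with $\Phi$ in place of the true normal bundle.
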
 

Here, $\bilip(f)$ denotes the bilipschitz constant of an injective map
$f : X \to f(X) \subset Y$ between two metric spaces $(X,d_X)$ and
$(Y,d_Y)$, i.e. whenever $A \subset X$,
\begin{gather*}
    \bilip(f,A) := \max\{ \lip(f,A), \lip(f^{-1}, f(A)) \} \,,
    \quad
    \bilip(f) := \bilip(f,X) \,.
\end{gather*}
As usual,
\begin{gather*}
    \lip(f,A) = \sup_{x,y \in A,\ x \ne y} \frac{d_Y(f(x),f(y))}{d_X(x,y)}
    \quad \text{and} \quad
    \lip(f) = \lip(f,X)
\end{gather*}
denotes the Lipschitz constant of $f : X \to Y$.

We actually establish an upper bound $\rho$ on 
the  Hausdorff-distance $\HD(\Sigma_1,\Sigma_2)$ 
depending only on the parameters $R,L,\alpha,m,$ and $n$, 
such that  if $\HD(\Sigma_1,\Sigma_2)\le \rho$, 
then $\Sigma_1$ and $\Sigma_2$ are ambiently isotopic, and such 
that a global bilipschitz diffeomorphism $J$ on $\R^n$ 
with $J(\Sigma_2)=\Sigma_1$ exists 
(see Corollary \ref{cor:isotopy} and Lemma \ref{lem:amb-diff}).
This uniform bound leads not only to the proof of Theorem \ref{thm:diff}
but allows us also in the end to give a quantitative
estimate on the number of isotopy types in Theorem \ref{thm:finiteness}.

\medskip

The rest of the paper is organised as follows. In Section 2, we gather simple
preliminary material. In Section~3, after a technical preparation devoted to
graph tilting for functions of class $C^{1,\alpha}$, we prove
Theorem~\ref{thm:compactness}. In Section~4, we construct the isotopies between
the submanifolds $\Sigma_j$ which converge in $\Cmn$, employing a
$C^1$-version of the tubular neighbourhood theorem; parts of this material seem
to be `folklore' but we give the details for the sake of completeness.  This leads
to the proof of Theorem \ref{thm:diff}.
Section~5 contains the proof of  semicontinuity and compactness, Theorem \ref{thm:lsc},
and of Corollary \ref{thm:minimizer},
and the final
Section~6 lays out the explicit estimate for the number of isotopy types
(stated in Theorem~\ref{thm:finiteness}). The whole exposition is more or less
self-contained.

\begin{rem*}
    The letter $C$ will denote a constant whose value may change even in a
    single string of estimates. Subscripted constants (e.g. $C_{l}$,
    $C_{\text{{\rm ang}}}$ etc.)  have global meaning and their value is fixed.  We
    write $C = C(\alpha,\beta,\gamma)$ when $C$ depends \emph{only} on $\alpha$,
    $\beta$ and $\gamma$.
\end{rem*}

\section{Preliminaries}

Most of the notation in the paper is standard. In particular, we use the usual
$\| {\cdot} \|_{C^{1,\alpha}}$ norms, and
\begin{displaymath}
    \HD(E,F) := \sup\{ \dist(y,F) : y \in E \} + \sup\{ \dist(z,E) : z \in F \} 
\end{displaymath}
denotes the Hausdorff distance of sets in $\R^n$.

For a measure $\mu$, we write $f_*\mu$ to denote its push-forward,
and $\spt(\mu)$ to denote its support, cf. Federer \cite[Chapter~2]{Fed69} or
Matilla~\cite[Chapter~1]{Mat95} for definitions.

\subsection{The Grassmannian.}
Throughout the paper, $G(n,m)$ stands for the Grassmannian of all
$m$-dimensional linear subspaces\footnote{Formally, $G(n,m)$ is defined as the
  homogeneous space
  \begin{displaymath}
      G(n,m) := O(n) / (O(m) \times O(n-m)) \,,
  \end{displaymath}
  where $O(n)$ is the orthogonal group; see e.g. A.  Hatcher's
  book~\cite[Section 4.2, Examples 4.53--4.55]{hatcher}.  Thus $G(n,m)$ becomes
  a topological space with the quotient topology.  We work with the angular
  metric, cf.  \eqref{def:angle}.}  of $\R^n$.

For an $m$-plane $U\in G(n,m)$ we let $U^\perp$ be the orthogonal
complement of $U$. The symbol $U_\natural$ denotes the orthogonal
projection of $\R^n$ onto $U$. For $U,V\in G(n,m)$ we set
\begin{equation}
	\label{def:angle}
	\ang(U,V):= \|U_\natural-V_\natural\|.
\end{equation}
This is a metric, and $G(n,m)$ endowed with this metric is compact.

\begin{rem}
    \label{rem:angle-norm}
    Using \cite[8.9(3)]{All72} we have for $U,V \in G(n,m)$
    \begin{gather*}
        \ang(U,V) = \| U_{\natural} - V_{\natural} \| 
        = \| U_{\natural}^{\perp} - V_{\natural}^{\perp} \|
        = \| U_{\natural}^{\perp} \circ V_{\natural} \|
        = \| U_{\natural} \circ V_{\natural}^{\perp} \|
        = \| V_{\natural}^{\perp} \circ U_{\natural} \|
        = \| V_{\natural} \circ U_{\natural}^{\perp} \| \,.
    \end{gather*}
    In particular
    \begin{gather*}
        \ang(U,V) 
        = \sup_{e \in U \cap \Sphere} |V_{\natural}^{\perp} e|
        = \sup_{e \in V \cap \Sphere} |U_{\natural}^{\perp} e| \,.
    \end{gather*}
    Here and later $\Sphere = \{ x \in \R^n : |x| = 1 \}$ denotes the unit
    sphere in~$\R^n$. 
\end{rem}

\begin{lem}
    \label{lem:angle-isomorphism}
    Assume $U,V \in G(n,m)$. If $\ang(U,V) < 1$, then
    \begin{itemize}
    \item $U_{\natural}|_V : V \to U$ is a linear isomorphism,
    \item $U^{\perp} \cap V = \{0\}$,
    \item setting $L = (U_{\natural}|_V)^{-1} : U \to V$ we have
        \begin{gather*}
            \| L \| = (1 - \ang(U,V)^2)^{-1/2} \,.
        \end{gather*}
    \end{itemize}
\end{lem}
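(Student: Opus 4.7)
My plan is to extract everything from the identities collected in Remark~\ref{rem:angle-norm}, in particular $\ang(U,V) = \|U_\natural^\perp \circ V_\natural\| = \sup_{e\in V\cap\Sphere}|U_\natural^\perp e|$, which immediately convert the hypothesis $\ang(U,V) < 1$ into the uniform bound $|U_\natural^\perp v| \le \ang(U,V)\,|v|$ for every $v \in V$.

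First I would dispose of the second bullet. If a unit vector $v \in U^\perp \cap V$ existed, then $U_\natural^\perp v = v$ would force $1 = |U_\natural^\perp v| \le \ang(U,V) < 1$, a contradiction. The first bullet then follows at once: the map $U_\natural|_V : V \to U$ has kernel $V \cap U^\perp = \{0\}$, so it is injective, and since $\dim V = \dim U = m < \infty$, it is a linear isomorphism.

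For the third bullet I would prove matching upper and lower bounds on $\|L\|$. For the upper bound, every $v \in V$ decomposes orthogonally as $v = U_\natural v + U_\natural^\perp v$, so $|v|^2 = |U_\natural v|^2 + |U_\natural^\perp v|^2 \le |U_\natural v|^2 + \ang(U,V)^2 |v|^2$; rearranging, and then setting $v = L u$, yields $\|L\| \le (1-\ang(U,V)^2)^{-1/2}$. For the matching lower bound I would invoke compactness of $V \cap \Sphere$ to pick a unit vector $v^* \in V$ attaining the supremum $|U_\natural^\perp v^*| = \ang(U,V)$; then $u^* := U_\natural v^*$ is nonzero (by the second bullet) with $|u^*|^2 = 1 - \ang(U,V)^2$, and $L u^* = v^*$ gives $|L u^*|/|u^*| = (1 - \ang(U,V)^2)^{-1/2}$.

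There is no substantive obstacle here: the lemma is pure Grassmannian linear algebra, and the only point requiring a moment of care is the passage to the sharp constant in the third bullet, which needs an actual maximiser on $V \cap \Sphere$---available for free by compactness of the unit sphere in the finite-dimensional subspace~$V$.
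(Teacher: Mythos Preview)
Your proof is correct and follows essentially the same route as the paper's: both extract everything from the identity $\ang(U,V)=\sup_{e\in V\cap\Sphere}|U_\natural^\perp e|$ and the Pythagorean splitting $|v|^2=|U_\natural v|^2+|U_\natural^\perp v|^2$. The only cosmetic difference is that for the norm of $L$ the paper gets equality in a single line by rewriting $\|L\|=\sup_{v\in V,\,v\ne 0}|v|/|U_\natural v|=(\inf_{e\in V\cap\Sphere}|U_\natural e|)^{-1}$, whereas you split into matching upper and lower bounds and invoke compactness explicitly for the latter.
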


\begin{proof}
    If $\ang(U,V) < 1$, then, by Remark~\ref{rem:angle-norm}, for each $v \in V$, $v
    \ne 0$
    \begin{gather*}
        |U_{\natural} v|^2 = |v|^2 (1 - |U_{\natural}^{\perp}(v / |v|)|^2 
        > 0 \,.
    \end{gather*}
    Hence, $\ker U_{\natural}|_V = \{0\}$ and, since $\dim U = \dim V$,
    $U_{\natural}|_V$ is a linear isomorphism. In particular $\ker U_{\natural}
    \cap V = U^{\perp} \cap V = \{0\}$. Observe that, by Remark~\ref{rem:angle-norm},
    \begin{gather*}
        \inf_{e \in V \cap \Sphere} |U_{\natural} e|^2 
        = 1 - \sup_{e \in V \cap \Sphere} |U_{\natural}^{\perp} e|^2
        = 1 - \ang(U,V)^2 \,.
    \end{gather*}
    Set $L = (U_{\natural}|_V)^{-1}$ and compute
    \begin{multline*}
        \| L \| 
        = \sup_{u \in U, u \ne 0} |L u| |u|^{-1}
        = \sup_{v \in V, v \ne 0} |L U_{\natural} v| |U_{\natural} v|^{-1}
        \\
        = \sup_{v \in V, v \ne 0} |v| |U_{\natural} v|^{-1}
        = \left( \inf_{v \in V, v \ne 0} |U_{\natural}(v/|v|)| \right)^{-1}
        = (1 - \ang(U,V)^2)^{-1/2} \,.
        \qedhere
    \end{multline*}
\end{proof}

\begin{rem}
    \label{rem:oblique-proj}
    Let $X \in G(n,m)$ and $Y \in G(n,n-m)$ be such that $\ang(X^{\perp},Y) <
    1$. Then,  by Lemma~\ref{lem:angle-isomorphism},  $X \cap Y = \{ 0 \}$ and
    we can define the \emph{oblique projection} $P : \R^n \to X$ along $Y$,
    i.e., a linear map such that
    \begin{gather}\label{obliqueproj}
        P \circ P = P \,,
        \quad
        \ker P = Y
        \quad \text{and} \quad
        \im P = X \,.
    \end{gather}
    Note that $P$ can also be characterised by the requirement
    \begin{gather}\label{obliqueproj2}
        \{ P v \} = (v + Y) \cap X \,.
    \end{gather}
\end{rem}

\begin{figure}[!ht]
    \begin{center}
        \includegraphics*[totalheight=6.2cm]{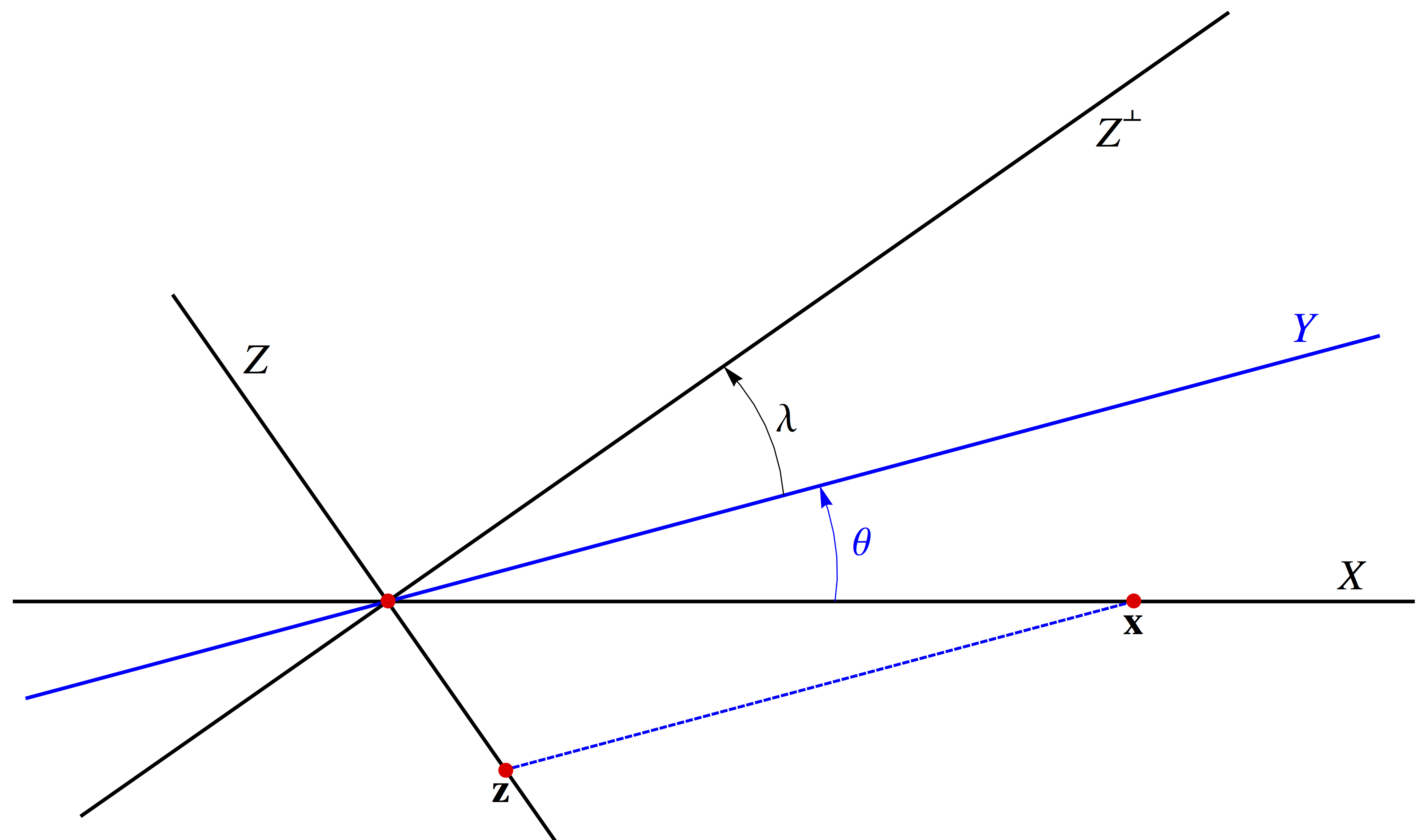}
    \end{center}

    \caption{\label{fig:prop2.3} The situation in Proposition~\ref{prop:tales1}:
      if $\mathbf{x}$, $X,Z$ are fixed, $\theta\to 0$, and
      $\mathbf{z}-\mathbf{x}\in Y$, then $\mathbf{z}\to 0$ and
      $|\mathbf{z}|\lesssim \theta$.}
\end{figure}

\begin{prop}
    \label{prop:tales1}
    Let $\theta \in [0,1]$, $\lambda \in [0,1)$ and $k \in \{ 1, \ldots,
    n-1\}$. Let $X,Y \in G(n,k)$ and $Z \in G(n,n-k)$ be such that 
    $\ang(X,Y) \le \theta$ and $\ang(Y,Z^{\perp}) \le \lambda$. 
    For any $\mathbf{x} \in X$ and $\mathbf{z} \in Z$ 
    with $\mathbf{z}-\mathbf{x} \in Y$ one has the estimate
    \begin{gather*}
        |\mathbf{z}| \le \frac{\theta}{1 - \lambda}|\mathbf{x}| \,.
    \end{gather*}
\end{prop}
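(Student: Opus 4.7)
The plan is to decompose $\mathbf{z}$ orthogonally relative to $Y$, writing $|\mathbf{z}|^2 = |Y_\natural \mathbf{z}|^2 + |Y_\natural^\perp \mathbf{z}|^2$, and to control the two summands separately: the $Y^\perp$-component via $\theta$, and the $Y$-component via $\lambda$. This decomposition is the natural choice because both hypotheses measure the tilt of $X$ and of $Z$ relative to the same reference plane $Y$, so projecting onto $Y$ and $Y^\perp$ should convert each hypothesis into a scalar inequality directly.

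First, since $\mathbf{z} - \mathbf{x} \in Y$, projecting onto $Y^\perp$ gives $Y_\natural^\perp \mathbf{z} = Y_\natural^\perp \mathbf{x}$. Using $\mathbf{x} \in X$, I rewrite $Y_\natural^\perp \mathbf{x} = (Y_\natural^\perp \circ X_\natural)\mathbf{x}$, and Remark~\ref{rem:angle-norm} identifies $\|Y_\natural^\perp \circ X_\natural\|$ with $\ang(X,Y) \le \theta$, so $|Y_\natural^\perp \mathbf{z}| \le \theta |\mathbf{x}|$. Next, since $\mathbf{z} \in Z$, I may write $Y_\natural \mathbf{z} = (Y_\natural \circ Z_\natural)\mathbf{z}$, and Remark~\ref{rem:angle-norm} applied to $U=Y$, $V=Z^\perp$ identifies $\|Y_\natural \circ Z_\natural\|$ with $\ang(Y, Z^\perp) \le \lambda$, whence $|Y_\natural \mathbf{z}| \le \lambda |\mathbf{z}|$.

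Pythagoras then gives $|\mathbf{z}|^2 \le \lambda^2 |\mathbf{z}|^2 + \theta^2 |\mathbf{x}|^2$, so $|\mathbf{z}| \le \theta |\mathbf{x}|/\sqrt{1-\lambda^2}$. Since $\sqrt{(1-\lambda)(1+\lambda)} \ge 1-\lambda$ for $\lambda \in [0,1)$, this is at most $\theta |\mathbf{x}|/(1-\lambda)$, as claimed. No single step presents a real obstacle; the only delicate point is selecting the correct identifications of composed-projection norms with $\ang(\cdot,\cdot)$ from the six-way identity in Remark~\ref{rem:angle-norm}, and recognising that $\sqrt{1-\lambda^2}$ may be cleanly weakened to $1-\lambda$ to match the stated form.
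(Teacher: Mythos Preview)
Your proof is correct and essentially identical to the paper's: both decompose $\mathbf{z}$ along $Y$ and $Y^\perp$ and bound $|Y_\natural\mathbf{z}|\le\lambda|\mathbf{z}|$, $|Y_\natural^\perp\mathbf{z}|\le\theta|\mathbf{x}|$ via Remark~\ref{rem:angle-norm}. The only difference is that the paper combines the two pieces with the triangle inequality, obtaining $|\mathbf{z}|\le\lambda|\mathbf{z}|+\theta|\mathbf{x}|$ directly, whereas you use Pythagoras to get the slightly sharper intermediate bound $|\mathbf{z}|\le\theta|\mathbf{x}|/\sqrt{1-\lambda^2}$ before weakening to the stated form.
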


\begin{proof}
    Since $Z^\perp_\natural \mathbf{u}= X^\perp_\natural \mathbf{x} =0$, two
    applications of the triangle inequality lead to
    \begin{gather*}
        |\mathbf{z}| \le |Y_{\natural}\mathbf{z}| 
        + |Y^{\perp}_{\natural}\mathbf{z}|
        \le |(Y_{\natural} - U^{\perp}_{\natural})\mathbf{z}| 
        + |Y^{\perp}_{\natural}(\mathbf{z}-\mathbf{x})| 
        + |(Y^{\perp}_{\natural} - X^{\perp}_{\natural})\mathbf{x}|
        \le \lambda |\mathbf{z}| + \theta |\mathbf{x}| \,.
        \qedhere
    \end{gather*}
\end{proof}

\subsection{An elementary topological result}  In a few proofs, we need to rely on the following standard
topological result. For the sake of completeness, we present a proof 
using degree mod 2. (There are of course other proofs, relying on 
the non-existence of the retraction of a ball onto its boundary or,
equivalently, on Brouwer's fixed point theorem.)

\begin{prop}
    \label{prop:deg-one}
    Let $\rho > 0$, $\sigma \in (0,1)$ and 
    $F \in C^0(\CKugel^m(0,\rho), \R^m)$ be such that
    \begin{gather*}
        \quad |F(x) - x| \le \sigma \rho 
        \qquad\mbox{for all }x \in \CKugel^m(0,\rho)\, .
    \end{gather*}
    Then for each $y\in \Kugel^m(0,(1-\sigma) \rho) $ 
    there exists $x \in \CKugel^m(0,\rho) $ such that $F(x) = y$.
\end{prop}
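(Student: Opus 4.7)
The plan is to apply the homotopy invariance of the mod 2 Brouwer degree to the continuous map $F - y$ on $\CKugel^m(0,\rho)$. First I would fix $y \in \Kugel^m(0,(1-\sigma)\rho)$ and connect $F - y$ to the translation $\mathrm{id} - y$ by the straight-line homotopy
\begin{equation*}
    H(x,t) \;:=\; (1-t)\, x + t\, F(x) - y \,,
    \qquad (x,t) \in \CKugel^m(0,\rho) \times [0,1] \,.
\end{equation*}
The only real calculation is to check that $H$ does not vanish on the boundary cylinder $\partial\Kugel^m(0,\rho) \times [0,1]$: for any $x$ with $|x| = \rho$ and any $t \in [0,1]$, the triangle inequality together with the hypothesis $|F(x) - x| \le \sigma\rho$ gives
\begin{equation*}
    |H(x,t)| \;\ge\; |x - y| - t\, |F(x) - x| \;\ge\; \rho - |y| - \sigma \rho \;>\; 0 \,,
\end{equation*}
since the assumption $y \in \Kugel^m(0,(1-\sigma)\rho)$ provides $|y| < (1-\sigma)\rho$.

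Second, by the homotopy invariance and normalisation of the mod 2 degree one obtains
\begin{equation*}
    \deg_2\!\big(F - y,\, \Kugel^m(0,\rho),\, 0\big)
    \;=\; \deg_2\!\big(\mathrm{id} - y,\, \Kugel^m(0,\rho),\, 0\big)
    \;=\; 1 \,,
\end{equation*}
the last equality holding because $y$ is the unique (transverse) preimage of $0$ under the translation $\mathrm{id} - y$ and lies in the interior of $\Kugel^m(0,\rho)$. Consequently $(F - y)^{-1}(\{0\}) \ne \emptyset$, which produces the required $x \in \CKugel^m(0,\rho)$ with $F(x) = y$.

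There is essentially no obstacle beyond the boundary estimate above; everything else is the standard toolkit for mod 2 degree. If one prefers to avoid degree theory altogether, an equally short route is to set $G(x) := x - F(x) + y$: the bound $|G(x)| \le |y| + \sigma\rho < \rho$ shows that $G$ maps $\CKugel^m(0,\rho)$ into itself, so Brouwer's fixed point theorem yields a fixed point of $G$, which is exactly a solution of $F(x) = y$.
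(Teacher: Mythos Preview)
Your proof is correct and follows essentially the same route as the paper: both use the straight-line homotopy $(1-t)x + t(F(x)-y)$ together with the same boundary estimate, the only cosmetic difference being that the paper packages the conclusion as a contradiction about the mod~2 degree of the induced boundary map $\partial\Kugel^m(0,\rho)\to\partial\Kugel^m(0,\rho)$ (which would have to be both~$1$ and~$0$) rather than invoking the Brouwer degree on the ball directly. Your Brouwer fixed-point alternative via $G(x)=x-F(x)+y$ does not appear in the paper and is arguably the cleanest of the three arguments.
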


\begin{proof}
    Fix $y\in \Kugel^m(0,(1-\sigma) \rho)$. 
    Assume that $y \notin F(\CKugel^m(0,\rho))$. Then,
    \begin{gather*}
        G : \partial\Kugel^m(0,\rho) \to \partial\CKugel^m(0,\rho)\,,
        \qquad 
        G(z) = \frac{F(z)-y}{|F(z)-y|}\rho 
        \quad\textnormal{for $z\in\partial\CKugel^m(0,\rho)$,}
    \end{gather*}                 
    is well defined and continuous. 
    Since $|F(x) - x| \le \sigma \rho$ for all $x \in \CKugel^m(0,\rho)$, 
    \begin{gather*}
        |-tz + t(F(z)-y)|\le t|F(z)-z|+t|y|
        < t\sigma \rho + t(1-\sigma)\rho < \rho  =|z|
    \end{gather*} 
    for all $z\in \partial\CKugel^m(0,\rho) $ and $t\in [0,1)$; 
    hence 
    $
    (1-t)z+t(F(z)-y)\not= 0$ for all $z\in \partial\Kugel^m(0,\rho) $ 
    and for all $t\in [0,1].$
    Thus, the map
    \begin{gather*}
        H : [0,1] \times \partial\CKugel^m(0,\rho) 
        \to \partial\CKugel^m(0,\rho)\, ,
        \qquad
        H(t,z) = \frac{(1-t)z + t(F(z)-y)}{|(1-t)z + t(F(z)-y)|}\rho 
    \end{gather*}   
    yields a well defined homotopy of $G$ and the identity map 
    on the sphere
    $\partial\CKugel^m(0,\rho) $. Hence $G$ has mod 2 degree $1$. 
    On the other hand, one can
    extend $G$ to the continuous mapping
    \begin{gather*}
        \tilde{G} : \CKugel^m(0,\rho) \to \partial\CKugel^m(0,\rho) \,,
        \qquad
        \tilde{G}(z) = \frac{F(z)-y}{|F(z)-y|}\rho \,.
    \end{gather*}
    Thus $G$ has mod 2 degree $0$, a contradiction. 
    For the relevant results
    on the mod 2 degree one may, e.g., consult 
    \cite[pp. 124,125]{Hir94}. 
\end{proof}

\section{Compactness}    

Each manifold $\Sigma\in \Cmn(R,L,d)$ is assembled from standard graph patches
that have controlled bending at length scales $\lesssim R$. Thus, intuitively,
if two such manifolds $\Sigma_1,\Sigma_2\in \Cmn(R,L,d)$ are sufficiently close
in Hausdorff distance, their tangent planes at points $x\in \Sigma_1$, $y\in
\Sigma_2$ with $|x-y|\lesssim \HD(\Sigma_1,\Sigma_2)$ must be close, too, for
otherwise the Hausdorff distance of the manifolds would be too large. Before
giving the precise quantitative statement, let us mention two simple
consequences of Definition \ref{class-definition} valid for each
$\Sigma\in\Cmn(R,L,d)$: For any $r\in (0,R]$ one finds
$x+v+f_x(v)\in\Kugel^n(x,\sqrt{2}r)$ for all $x\in\Sigma,$ $v\in
T_x\Sigma\cap\Kugel^n(0,r)$, since $|f_x(v)|=|f_x(v)-f_x(0)|\le |v|<r$ so that
$$
|x-(x+v+f_x(v))|^2=|v|^2+|f_x(v)|^2<2r^2.
$$
Secondly, one can improve the estimate for $|f_x(v)|$ for such $x,v,$ and
$r\in (0,R]$ as follows.
\begin{equation}
    \label{improved-f_x-estimate}
    |f_x(v)| = \Big|\int_0^1\frac{d}{dt} f_x(tv)\,dt\Big|
    = \Big|\int_0^1 (Df_x(tv)-Df_x(0))v\,dt\Big|
    \le L|v|^{1+\alpha} <  L r^{1+\alpha}.
\end{equation}

\begin{lem}[\textbf{proximity of tangent planes}]
    \label{lem:hd-angle}
    Let $R,L,d >0$, $\alpha \in (0,1]$, $A\ge 1,$  and $\Sigma_1,\Sigma_2 \in
    \Cmn(R,L,d)$ such that
    \begin{gather*}
        \HD(\Sigma_1,\Sigma_2) 
        < \min\big{\{} 2^{-6} A^{-2} R^2 ,\, L^{-2/\alpha},\, 1 \big{\}},
    \end{gather*}
    and let $x \in \Sigma_1$ and $y \in \Sigma_2$ be such that $|x - y| \le A
    \HD(\Sigma_1,\Sigma_2)$. Then there exists a~constant $C_{\text{{\rm ang}}} =
    C_{\text{{\rm ang}}}(L,A)$
    such that
    \begin{gather}\label{angle-estimate}
        \ang(T_x\Sigma_1,T_y\Sigma_2) 
        \le C_{\text{{\rm ang}}} \HD(\Sigma_1,\Sigma_2)^{\alpha/2} \,.
    \end{gather}
    In fact, one can take $C_{\text{{\rm ang}}}(L,A)=L\big(1+(4A)^{2}\big)+2A$. 
\end{lem}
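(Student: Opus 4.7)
The plan is to compare the two tangent planes by traveling a carefully chosen distance $r$ in the direction $v\in T_x\Sigma_1\cap\Sphere$ along the $\Sigma_1$-graph over $T_x\Sigma_1$, using the Hausdorff hypothesis to find a nearby point on $\Sigma_2$, and then reading off its tangent-plane displacement from the $\Sigma_2$-graph over $T_y\Sigma_2$. Writing $h:=\HD(\Sigma_1,\Sigma_2)$, the natural length scale is $r:=h^{1/2}$, which balances the Hausdorff error $h/r=h^{1/2}$ against the bending error $Lr^\alpha=Lh^{\alpha/2}$ and produces the exponent $\alpha/2$ in~\eqref{angle-estimate}.

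\textbf{Construction.} Fix $v\in T_x\Sigma_1\cap\Sphere$ and set $p:=x+rv+f_x(rv)$. Condition~\ref{cd:i:graph-patches} applied with radius $\sqrt{2}\,r$ (the smallness of $h$ guarantees $r\le R$) places $p$ on $\Sigma_1$, and~\eqref{improved-f_x-estimate} yields $|f_x(rv)|\le Lr^{1+\alpha}$. Pick $q\in\Sigma_2$ with $|p-q|\le h$; combining $|x-y|\le Ah$, $|p-x|\le\sqrt{2}\,r$ and $h\le 1$, $A\ge 1$ gives $|q-y|\le(1+A)h+\sqrt{2}\,r\le 4Ah^{1/2}$. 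The assumption $h<2^{-6}A^{-2}R^2$ ensures $|q-y|<R$, so $q-y$ lies inside the graph patch of $\Sigma_2$ at $y$: there is a unique $w\in T_y\Sigma_2$ with $q-y=w+f_y(w)$ and $|w|\le|q-y|\le 4Ah^{1/2}$.

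\textbf{Projection step.} Rewriting the definition of $p$ as
\begin{displaymath}
    rv=(q-y)+(p-q)-(x-y)-f_x(rv)
\end{displaymath}
and projecting onto $T_y\Sigma_2^\perp$ kills the $w$ part of $q-y$ and leaves
\begin{displaymath}
    r\,T_y\Sigma_2^\perp v=f_y(w)+T_y\Sigma_2^\perp(p-q)-T_y\Sigma_2^\perp(x-y)-T_y\Sigma_2^\perp f_x(rv).
\end{displaymath}
The four summands are bounded by $L|w|^{1+\alpha}$, $h$, $Ah$ and $Lr^{1+\alpha}$ respectively. Dividing by $r=h^{1/2}$, using $h^{1/2}\le h^{\alpha/2}$ (valid for $h\le 1$, $\alpha\le 1$), and plugging in $|w|\le 4Ah^{1/2}$ and $(4A)^{1+\alpha}\le(4A)^2$ collects everything into $\bigl(L(1+(4A)^2)+2A\bigr)h^{\alpha/2}$. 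Since $v\in T_x\Sigma_1\cap\Sphere$ was arbitrary, Remark~\ref{rem:angle-norm} converts this pointwise bound into~\eqref{angle-estimate}.

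\textbf{Main obstacle.} The only real difficulty is the bookkeeping needed to pick $r$ sharply: it must be small enough that $q-y$ still lies in the $R$-patch of $\Sigma_2$ at $y$, and large enough that the Hausdorff term $h$ produces the correct exponent after division by $r$. The three smallness hypotheses on $h$ are tailored exactly to these requirements, guaranteeing in turn $r\le R$, $|q-y|<R$, and that the leading-order term $(1+A)h/r$ is dominated by $h^{\alpha/2}$.
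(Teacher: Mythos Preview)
Your proof is correct and follows essentially the same route as the paper's: pick a length scale $r=h^{1/2}$, walk out along $T_x\Sigma_1$ to a point $p\in\Sigma_1$, jump to a Hausdorff-close $q\in\Sigma_2$, and read off the normal displacement from the $\Sigma_2$-graph at $y$. Your bookkeeping via the identity $rv=(q-y)+(p-q)-(x-y)-f_x(rv)$ and direct projection onto $T_y\Sigma_2^\perp$ is marginally cleaner than the paper's detour through $|u-v|$, but the four error terms and the final constant are identical; the only omission is the trivial case $h=0$.
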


\begin{proof}
    For $\HD(\Sigma_1,\Sigma_2)=0$ we have $x=y$ and $\Sigma_1=\Sigma_2$ as
    $C^1$-manifolds, so that $T_x\Sigma_1=T_y\Sigma_2$; hence both sides of
    \eqref{angle-estimate} are zero. 
    So, let us assume that $\HD(\Sigma_1,\Sigma_2)>0.
    $
    The following arguments hold for all 
    $u \in T_x\Sigma_1$ with 
    \begin{gather}
        \label{u-condition}
        0< |u| = \HD(\Sigma_1,\Sigma_2)^{1/2} 
        < \min \big{\{} 2^{-3} A^{-1} R ,\, L^{-1/\alpha},\, 1 \big{\}} \,.
    \end{gather}
    Since $\Sigma_1 \in \Cmn(R,L,d)$ we find  
    $$
    p:= x+u+f_x(u) \in \Sigma_1\cap\Ball{x}{R}
    $$ 
    by our remark preceding this lemma, since $|u|<R/(2^3A)<R/\sqrt{2}$. 
    We thus infer
    $$
    (T_x\Sigma_1)_{\natural}(p-x)=(T_x\Sigma_1)_{\natural}(u+f_x(u))=u,
    $$
    and 
    \begin{equation}\label{16}
        \dist(p-x,T_x\Sigma_1)  =  \dist(u+f_x(u),T_x\Sigma_1)=|f_x(u)|
        \overset{\eqref{improved-f_x-estimate}}{\le}  L|u|^{1+\alpha}
        \overset{\eqref{u-condition}}{<}L|u|\frac 1L = |u|.
    \end{equation} 
    In~particular, 
    $$
    |p-x|^2 = |u|^2+|f_x(u)|^2=|u|^2+\dist^2(p-x,T_x\Sigma_1)< 2|u|^2.
    $$
    Next choose a~point $q \in \Sigma_2$ such that $|p-q| \le
    \HD(\Sigma_1,\Sigma_2)$ and set $v := (T_y\Sigma_2)_{\natural}(q-y)$. Then
    one finds
    \begin{align*}
        |v| &\le \|(T_y\Sigma_2)_{\natural}\| |q-y|  
        \, = |q-y| 
        \\
        & \le |q-p|+|p-x|+|x-y|
        \le \HD(\Sigma_1,\Sigma_2)+\sqrt{2}|u|+A\HD(\Sigma_1, \Sigma_2)
        \\
        & \le |u|\Big[ (1+A)|u|+  \sqrt{2}\Big]
        \le A|u|\Big[2|u|+\sqrt{2}\Big]
        \overset{\eqref{u-condition}}{<} 4A|u|
        \overset{\eqref{u-condition}}{<}\frac{1}{\sqrt{2}}R,
    \end{align*}
    so that $y+v+f_y(v)\in\Kugel^n(y,R)$ and $q-y=v+f_y(v)\in\graph(f_y)$, by
    virtue of our remark preceding this lemma.  Employing the identities
    $\dist(p-x,T_x\Sigma_1)=|f_x(u)|=|p-(x+u)|$ and
    $\dist(q-y,T_y\Sigma_2)=|f_y(v)|=|q - (y+v)|$ we can write
    \begin{eqnarray*}
        \dist(u,T_y\Sigma_2) 
        \le |u-v|
        & \le & |u+(x-p)| + |p-q| + |q - (y+v)| + |y-x|  \\
        & \le & |f_x(u)|+\HD(\Sigma_1,\Sigma_2)+|f_y(v)|+A|u|^2\\
        & \overset{\eqref{improved-f_x-estimate}}{\le} & 
        L\big{(} |u|^{1+\alpha} +
        |v|^{1+\alpha} \big{)} +(1+A)\HD(\Sigma_1,\Sigma_2) \\
        & \overset{\eqref{u-condition}}{<} &
        \big[L\big(1+(4A)^{2}\big)+2A\big]   
        |u|^{1+\alpha}\, =:\, C_{\text{{\rm ang}}}(L,A)|u|^{1+\alpha}.
    \end{eqnarray*}
    Since $ \dist(u,T_y\Sigma_2)=|(T_y\Sigma_2)^\perp_\natural
    (u)|=|u|\big|(T_y\Sigma_2)^\perp_\natural
    \big(\tfrac{u}{|u|}\big)\big|=|u|\dist\big(\tfrac{u}{|u|},T_y\Sigma_2\big) $
    we arrive at $ \dist\big(\tfrac{u}{|u|},T_y\Sigma_2\big) \le
    C_{\text{{\rm ang}}}(L,A)|u|^{\alpha}=C_{\text{{\rm ang}}}(L,A)\HD(\Sigma_1,\Sigma_2)^{\alpha/2}.  $
    Since the requirement \eqref{u-condition} on $u \in T_x\Sigma_1$ does not
    depend on the direction $e:=u/|u|\in\S^{n-1}=\S$ we obtain by
    Remark~\ref{rem:angle-norm}
    \begin{gather*}
        \ang(T_x\Sigma_1,T_y\Sigma_2) 
        = \sup_{e \in T_x\Sigma_1 \cap \Sphere}
        |(T_y\Sigma_2)_{\natural}^{\perp} e|
        = \sup_{e \in T_x\Sigma_1 \cap \Sphere} \dist (e,T_y\Sigma_2)
        \le C_{\text{{\rm ang}}} \HD(\Sigma_1,\Sigma_2)^{\alpha/2} \,.
        \qedhere
    \end{gather*}
\end{proof}

To prove Theorem~\ref{thm:compactness}, one applies the Arzel\`{a}--Ascoli
theorem to graph patches of the sequence $\Sigma_j$. To make this possible, it
is necessary to tilt all the graphs (of a subsequence of the $\Sigma_j$'s
intersected with a fixed ball of radius $\approx R$) so that they are all
defined over \emph{the same} plane. Here is a technical lemma that we shall use.

\begin{lem}[\textbf{graph tilting}]
    \label{lem:tilted-graph}
    Let $V \in G(n,m)$, $\alpha \in (0,1]$, $\vartheta \in (0,\frac 1{100})$, $L
    > 0$, $v \in V$, $r \in (0,\infty]$
    and $f \in C^{1,\alpha}(V,V^{\perp})$ is such that $\lip(f) \le 1$ and
    \begin{gather*}
        \| Df(x) - Df(y) \| \le L |x-y|^{\alpha}
        \quad \text{for $x,y \in  V \cap \Ball{v}{r}$}  \, .
    \end{gather*}
    Then the following holds. For each $U \in G(n,m)$ with $\ang(U,V) \le
    \vartheta$  there exists
    a~function $g \in C^{1,\alpha}(U,U^{\perp})$ such that for
    $\omega := U_\natural(v + f(v))$,
    \begin{gather*}
        \graph(f) = \graph(g)
        \\
        \quad \text{and} \quad
        \| Dg(\xi) - Dg(\eta) \| 
        \le L_g(L,\vartheta,\alpha)
        |\xi-\eta|^{\alpha} 
        \quad \text{for $\xi,\eta \in  U \cap \Ball{\omega}{\tfrac{1}{1+3\vartheta}r}$}  \,,
    \end{gather*}
    where $L_g(L,\vartheta,\alpha) := L\, (1+12\vartheta)(1+3\vartheta)^\alpha
    /(1-4\vartheta).$ Moreover, $\lip(g) \le (1 + 2 \vartheta)/(1-2\vartheta)$,
    and $g(0)=0$ if $f(0)=0.$ If $f(0)=0$ and $Df(0)=0$ then
    \begin{equation}\label{Dg0-est}
        \|Dg(0)\|^2\le\frac{\vartheta^2}{1-\vartheta^2}.
    \end{equation}
\end{lem}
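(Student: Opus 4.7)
The plan is to identify $g$ by showing that orthogonal projection onto $U$ restricts to a bijection on $\graph(f)$; the function whose graph coincides with $\graph(f)$ is then $g$, and its regularity is inherited from $f$ via the implicit function theorem. Set $\Phi\colon V\to U$, $\Phi(x):=U_\natural(x+f(x))$. Since $Df(x)$ takes values in $V^\perp$ and $\|U_\natural|_{V^\perp}\|\le\ang(U,V)\le\vartheta$ by Remark~\ref{rem:angle-norm}, while $U_\natural|_V\colon V\to U$ is a linear isomorphism with inverse of norm $(1-\vartheta^2)^{-1/2}$ by Lemma~\ref{lem:angle-isomorphism}, the derivative $D\Phi(x)=U_\natural\circ(I+Df(x))$ differs from $U_\natural|_V$ by at most $\vartheta$ in operator norm.

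To make bijectivity global, I fix $\xi\in U$, set $P:=(U_\natural|_V)^{-1}$, and apply the Banach fixed-point theorem to the map $T_\xi\colon V\to V$ defined by $T_\xi(x):=P(\xi)-P(U_\natural f(x))$; its contraction constant is bounded by $\|P\|\cdot\|U_\natural|_{V^\perp}\|\cdot\lip(f)\le\vartheta/\sqrt{1-\vartheta^2}<1$. Let $x(\xi)$ denote the unique fixed point and set $g(\xi):=U^\perp_\natural(x(\xi)+f(x(\xi)))$; by construction every point of $\graph(f)$ has the form $\xi+g(\xi)$, so $\graph(f)=\graph(g)$. If $f(0)=0$, then $x(0)=0$, whence $g(0)=0$; if additionally $Df(0)=0$, the chain rule gives $Dg(0)=U^\perp_\natural|_V\circ(U_\natural|_V)^{-1}$, and the stated estimate $\|Dg(0)\|^2\le\vartheta^2/(1-\vartheta^2)$ follows from $\|U^\perp_\natural|_V\|\le\vartheta$ combined with Lemma~\ref{lem:angle-isomorphism}.

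For the quantitative regularity, set $A(x):=U_\natural\circ(I+Df(x))$ and $B(x):=U^\perp_\natural\circ(I+Df(x))$; differentiating the identity $\Phi(x(\xi))=\xi$ gives $Dg(\xi)=B(x(\xi))\circ A(x(\xi))^{-1}$. A Neumann-series expansion, based on $\|A(x)-U_\natural|_V\|\le\vartheta$, provides a uniform bound on $\|A(x)^{-1}\|$, while the same contraction estimate together with Pythagoras in the splittings $\R^n=V\oplus V^\perp=U\oplus U^\perp$ yields $\lip(\Phi^{-1})\le 1/(1-2\vartheta)\le 1+3\vartheta$, so that $x(\xi)\in V\cap\Ball{v}{r}$ whenever $\xi\in U\cap\Ball{\omega}{r/(1+3\vartheta)}$. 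The algebraic identity
\begin{gather*}
B_1 A_1^{-1}-B_2 A_2^{-1}=(B_1-B_2)A_1^{-1}+B_2 A_2^{-1}(A_2-A_1)A_1^{-1}
\end{gather*}
combined with $\|B(x_1)-B(x_2)\|\le L|x_1-x_2|^\alpha$ and $\|A(x_1)-A(x_2)\|\le\vartheta L|x_1-x_2|^\alpha$ then produces the desired Hölder estimate on $Dg$; the factor $(1+3\vartheta)^\alpha$ in $L_g$ arises when converting $|x_1-x_2|^\alpha$ into $|\xi_1-\xi_2|^\alpha$. The Lipschitz bound $\lip(g)\le(1+2\vartheta)/(1-2\vartheta)$ is a similar but simpler computation that uses the Pythagorean identity $|\xi-\xi'|^2+|g(\xi)-g(\xi')|^2=|x-x'|^2+|f(x)-f(x')|^2$.

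The main obstacle is not conceptual---every individual step is a standard tool---but bookkeeping: extracting the precise constants $L_g=L(1+12\vartheta)(1+3\vartheta)^\alpha/(1-4\vartheta)$ and $(1+2\vartheta)/(1-2\vartheta)$ demands careful tracking of the $\vartheta$-dependence through the Neumann series for $A(x)^{-1}$ and through the various orthogonal decompositions that enter each estimate.
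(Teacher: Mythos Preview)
Your approach is correct and genuinely different from the paper's. The paper establishes bijectivity of $\Phi$ by first checking injectivity directly and then invoking a topological degree argument (Proposition~\ref{prop:deg-one}) for surjectivity; your Banach fixed-point argument is cleaner and gives both at once. For the H\"older oscillation of $Dg$, the paper works geometrically with oblique projections $F_p, G_p$ of $\R^n$ onto $T_p\Sigma$ along $V^\perp$ and $U^\perp$ respectively, identifying $Df(x)v = F_p v - v$ and $Dg(z)u = G_p u - u$, and then compares $F_{p_1}-F_{p_2}$ with $G_{p_1}-G_{p_2}$ via the Thales-type estimate of Proposition~\ref{prop:tales1}. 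Your route via $Dg(\xi)=B(x(\xi))A(x(\xi))^{-1}$, the algebraic identity $B_1A_1^{-1}-B_2A_2^{-1}=(B_1-B_2)A_1^{-1}+B_2A_2^{-1}(A_2-A_1)A_1^{-1}$, and a Neumann series bound for $\|A^{-1}\|$ is the standard implicit-function-theorem computation and arguably more transparent.

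Regarding your bookkeeping concern: carrying out your estimates with $\|A^{-1}\|\le(\sqrt{1-\vartheta^2}-\vartheta)^{-1}$, $\|B_2A_2^{-1}\|=\|Dg(\xi_2)\|\le(1+2\vartheta)/(1-2\vartheta)$, $\|B_1-B_2\|\le L|x_1-x_2|^\alpha$, $\|A_1-A_2\|\le\vartheta L|x_1-x_2|^\alpha$, and $|x_1-x_2|\le(1+3\vartheta)|\xi_1-\xi_2|$ actually yields a constant \emph{strictly smaller} than the stated $L_g$ for $\vartheta<1/100$, so the exact form in the lemma is not an obstacle. The paper's constants arise from its particular geometric decomposition (the terms $12\vartheta$ and $4\vartheta$ come from specific applications of Proposition~\ref{prop:tales1}); your method recovers the stated bound with room to spare. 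The paper's approach has the advantage of reusing Propositions~\ref{prop:tales1} and~\ref{prop:deg-one}, which it needs elsewhere anyway; yours is more self-contained.
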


\begin{rem*}
    By taking $r=\infty$ we mean $\Kugel(v,r)=\R^n$.
\end{rem*}

\begin{proof}
    If $U=V$ simply set $g:=f$, and we are done. So assume $\ang(U,V)\in
    (0,\vartheta]$ in the following.

    \medskip\noindent
    \emph{Step 1: defining  $g$.}\,\,  Set $\Sigma := \graph(f)$ and for 
    $p_1,p_2
    \in \Sigma$ define  $x_1 := V_{\natural}(p_1)$, $x_2 := V_{\natural}(p_2)$, 
    $z_1
    := U_{\natural}(p_1)$, and $z_2 := U_{\natural}(p_2)$.
    Then $p_i=x_i+f(x_i)$ for $i=1,2,$ and since $\lip(f) \le 1$ and
    $\ang(U,V)\le\vartheta < \frac 1{100}$ we have
    \begin{align}
        \label{eq:zx-comp}
        |(x_2 - x_1) - (z_2 - z_1)| 
        &= |(V_{\natural} - U_{\natural})(p_2 - p_1)|
        \le \|V_{\natural} - U_{\natural}\| |p_2 - p_1|
        \le\vartheta |p_2 - p_1| \\
        &= \vartheta \big|(x_2 - x_1) + (f(x_2) - f(x_1))\big|
        \le 2 \vartheta |x_2 - x_1|< \tfrac{1}{50}|x_2-x_1|,  \notag
    \end{align}   
    where only the very last inequality is restricted to the case
    $x_1\not=x_2.$
    If $z_1=U_\natural(p_1)=U_\natural(p_2)=z_2$ then \eqref{eq:zx-comp}
    implies $0\le (1-2\vartheta)|x_2-x_1|\le 0$, hence
    $x_1=V_\natural(p_1)= V_\natural(p_2)=x_2$, so that
    $$
    p_1=x_1+f(x_1)=x_2+f(x_2)=p_2.
    $$
    In other words, if $p_1\not= p_2$ then $U_\natural(p_1)\not=
    U_\natural(p_2)$, or $U_\natural|_\Sigma:\Sigma\to U$ is injective.

    Setting  $q: = U_\natural (0+f(0)) \in U$,
    \begin{gather*}
        \phi_1 : V \ni x \longmapsto x+f(x) \in \Sigma,
        \qquad
        \phi_2 : \Sigma\ni p\longmapsto U_\natural (p) -  q,
    \end{gather*}
    we find that $\phi:=\phi_2\circ\phi_1:V\to U$ is injective, since both 
    $\phi_1$ and $\phi_2$ are injective. Moreover, $\phi$ is continuous,
    and 
    $
    \phi(0)=\phi_2(\phi_1(0))=\phi_2(0+f(0))=
    0.
    $ 
    Letting $x_1:=0$ in \eqref{eq:zx-comp}, and setting $x:=x_2$, 
    $z:=z_2=\phi(x_2)=\phi(x)$ we infer from \eqref{eq:zx-comp}
    \begin{equation}\label{phi-deviation}
        |x-\phi(x)|\le 2\vartheta |x|\qquad\mbox{for all}\quad x
        =V_\natural(x+f(x))\in V_\natural(\graph(f))=V.
    \end{equation}
    Notice that the restricted projection $V_\natural|_U:U\to V$ is bijective,
    since $\ang(U,V)\le\vartheta<1$. Indeed, since $\dim U=\dim V$ it suffices
    to check that $V_\natural|_U$ is injective. But $V_\natural(u_1)=V_\natural
    (u_2)$ for $u_1,u_2\in U$ with $u_1\not=u_2$ would imply 
    that $0\not= u_1-u_2\in U$ would be contained in the kernel of $V_\natural$, 
    so that we would arrive at the contradictory inequality
    $$
    \|V_\natural-U_\natural\|\ge
    \Big|V_\natural\Big(\frac{u_1-u_2}{|u_1-u_2|}\Big)
    - U_\natural\Big(\frac{u_1-u_2}{|u_1-u_2|}\Big)\Big|
    = \Big|\frac{u_1-u_2}{|u_1-u_2|}\Big|=1
    > \|V_\natural-U_\natural\|>0.
    $$
    To show that $\phi$ is also surjective (hence bijective) consider a
    linear isometry $I_V:V\to\R^m$, define $F\in C^0(\R^m,\R^m)$ to be
    $F:=I_V\circ V_\natural|_U\circ\phi\circ I_V^{-1}$, and estimate
    for $\xi\in\R^m$
    \begin{eqnarray*}
        |\xi-F(\xi)| & = & 
        |I_V\circ V_\natural\big(I_V^{-1}(\xi)\big)
        -I_V\circ V_\natural\circ\phi \big(I_V^{-1}(\xi)\big)|
        =  |V_\natural\big(I_V^{-1}(\xi)\big)
        -V_\natural\circ\phi\big(I_V^{-1}(\xi)\big)|\\
        & \le & |I_V^{-1}(\xi)-\phi\big(I_V^{-1}(\xi)\big)|
        \overset{\eqref{phi-deviation}}{\le}
        2\vartheta|I_V^{-1}(\xi)| = 2\vartheta|\xi|.
    \end{eqnarray*}
    Thus, $F$ satisfies the assumptions of Proposition~\ref{prop:deg-one} for
    each  $\rho>0$ and $\sigma:=2\vartheta<1,$ 
    which implies that $F:\R^m\to\R^m$
    is surjective.  Therefore $V_\natural|_U\circ\phi:V\to V$ is surjective, 
    and finally also $\phi:V\to U$ is surjective, hence bijective.

    We are now in the position to define $g := U_\natural^\perp\circ\phi_1
    \circ\phi^{-1} \circ \tau_{-q}  : U\to U^\perp$,  where $\tau_q(x) =
    x+q$ for $x \in \R^n$ is the usual translation. Since $\phi_1$ is of
    class $C^{1,\alpha}$ one finds $\phi\in C^{1,\alpha}$, and so $\phi^{-1}\in
    C^{1,\alpha}$, and hence $g\in C^{1,\alpha}(U,U^\perp)$; see, e.g.,
    \cite[Section~2.2]{BHS-Sard} for a brief self-contained argument showing
    that the class $C^{1,\alpha}$ is closed under composition and inversion.
    Moreover, if $p \in \Sigma$, $V_{\natural}p = x$ and $U_{\natural}p =
    z$, then $z = \phi(x) +q$ and $x + f(x) = p = z + g(z)$; hence, $\graph(f) =
    \Sigma = \graph(g)$.   In particular, if $f(0)=0$ then $0=0+f(0)=
    z+g(z)$ for $z=U_\natural(0)=0$; hence $g(0)=0.$  Since $x\mapsto x+f(x)$ 
    parameterises $\Sigma=\graph(f)$, 
    one has in the
    point $p=x+f(x)\in\Sigma$ the $m$-dimensional tangent plane $
    T_p\Sigma=(\Id + Df(x))(T_xV)=(\Id+Df(x))(V)$, 
    and likewise, $T_p\Sigma=(\Id+Dg(z))(U)$
    if $p=z+g(z).$ Thus, if $f(0)=0$ and $Df(0)=0$, then we find
    $$
    T_0\Sigma=(\Id+Df(0))(V)
    =V=(\Id+Dg(0))(U)\AND \ang(U,(\Id+Dg(0)(U))=\ang(U,V)\le\vartheta,
    $$
    so that we can apply \cite[8.9(5)]{All72} 
    with $S\equiv S_2:=U$ and $S_1:=(\Id+Dg(0))(U),$ 
    $\eta_2:=0$, $\eta_1:=Dg(0)$, to obtain \eqref{Dg0-est}.

    \medskip\noindent
    \emph{Step 2: Lipschitz continuity of $g$ and oscillation of
      $Dg$.}\,\, By definition,  $z + g(z) = x + f(x)$ for $z =
    \phi(x) + q \in U$, $x \in V$, 
    so that by \eqref{eq:zx-comp} and by the assumption
    $\lip(f) \le 1$,
    \begin{eqnarray*}
        \hspace{-0.3cm}   |g(z_2)-g(z_1)| 
        &=& |(z_2 + g(z_2) - (z_1 + g(z_1)) - (z_2 - z_1)|
        \\
        &=& |(x_2 + f(x_2) - (x_1 + f(x_1)) - (\phi(x_2) - \phi(x_1) |
        \\
        &\le& |f(x_2) - f(x_1)| + |(x_2 - \phi(x_2)) - (x_1 - \phi(x_1))|
        \\
        &\le & |x_2 - x_1| + 
        |U_{\natural} (f(x_1) - f(x_2)) + U_{\natural}^{\perp}(x_2 - x_1)|
        \\
        &\overset{\textnormal{Rem. \ref{rem:angle-norm}}}{\le}& 
        |x_2 - x_1| + \ang(U,V) ( |f(x_1) - f(x_2)| + |x_2 - x_1|)
        \overset{\eqref{eq:zx-comp}}{\le}
        \frac{1 + 2\vartheta}{1 - 2\vartheta}|z_2-z_1| \,. 
    \end{eqnarray*}
    With $
    T_p\Sigma=(\Id+Df(x))(V)$ for $p:=x+f(x)\in\Sigma$ we obtain  for any $v\in
    V, $ $v\not=0$,
    $$
    \Big|V^\perp_\natural\Big(\frac{v+Df(x)v}{|v+Df(x)v|}\Big)\Big|^2
    =\frac{|Df(x)v|^2}{|v|^2+|Df(x)v|^2}
    \le\frac{\|Df\|^2_\infty|v|^2}{|v|^2+\|Df\|_\infty^2|v|^2}
    \le\frac 12,
    $$
    since $\|Df\|_\infty=\lip(f)\le 1,$ and by the fact  that for $c>0$ the 
    function $\xi\mapsto\xi/(c+\xi)$ is non-decreasing on $[0,\infty).$
    Thus, according to Remark \ref{rem:angle-norm},
    \begin{equation}\label{angle-quitesmall}
        \ang(T_p\Sigma^{\perp},V^\perp)
        = \ang(T_p\Sigma,V)\le\frac{1}{\sqrt{2}}<1,
    \end{equation}
    which implies
    \begin{equation}\label{angle-small}
        \ang(T_p\Sigma^\perp,U^\perp)=\ang(T_p\Sigma,U)\le
        \ang(T_p\Sigma,V)+\ang(V,U)\le\frac{1}{\sqrt{2}}+\vartheta <1.
    \end{equation}
    Consequently, the oblique projections $F_p:\R^n\to X:=T_p\Sigma$ along
    $V^\perp$ with $\textnormal{ker\,}F_p=V^\perp$, and
    $G_p:\R^n\to X$ along $U^\perp$ with $\textnormal{ker\,}G_p=U^\perp$
    are well-defined, and satisfy
    $
    \|F_p\|=\max_{e\in\S}|F_p(e)|\le\sqrt{2}<2,
    $
    and  $\|G_p\|<2$, which can be seen as follows. Assume without loss of
    generality $\|F_p\|>0$. Since 
    $$
    |F_p(e)|^2=|F_p(V_\natural(e)) +F_p(V^\perp_\natural(e))|^2
    = |F_p(V_\natural(e))|^2<\|F_p\|^2\Big(|V_\natural(e)|^2    
    + |V^\perp_\natural(e)|^2\Big)=\|F_p\|,
    $$
    if $V^\perp_\natural(e)\not=0.$ But $\S=\S^{n-1}$ is compact, 
    so that there exists
    $e^*\in\S$ (not necessarily unique) with 
    $\|F_p\|^2=|F_p(e^*)|^2$, which
    then necessarily means that $V^\perp_\natural(e^*)=0,$ i.e., 
    $e^*\in\S\cap V.$
    For any such $e^*$ we can write
    $$
    |F_p(e^*)|^2=|e^*+F_p(e^*)-e^*|^2
    \overset{\eqref{obliqueproj2}}{=}
    1+|F_p(e^*)-e^*|^2=1+|V^\perp_\natural(F_p(e^*))|^2,
    $$
    since $F_p(e^*)-e^*\in V^\perp$ and $e^*\in V$; see \eqref{obliqueproj2}.
    Now, with 
    $$
    |V^\perp_\natural(F_p(e^*))|\le\ang(V,X)|F_p(e^*)|
    \overset{\eqref{angle-quitesmall}}{\le}\frac{1}{\sqrt{2}}|F_p(e^*)|
    $$
    one finds $|F_p(e^*)|^2\le 1+\frac 12 |F_p(e^*)|^2,$ which immediately gives
    $\|F_p\|=|F_p(e^*)|\le\sqrt{2}.$ 
    A similar argument for $G_p$ using \eqref{angle-small} instead of
    \eqref{angle-quitesmall} leads to
    $\|G_p\|^2 \le 1+((1/\sqrt{2})+\vartheta)^2\|G_p\|^2$, and hence
    $$
    \|G_p\| \le \frac{1}{\sqrt{1 - ((1/\sqrt{2}) + \vartheta)^2}}
    < \frac{1}{\sqrt{1 - ((1/\sqrt{2})  + (1/100))^2}} <  2.
    $$

    For $z_1,z_2\in U$ and $p_i:=z_i+g(z_i)$, $i=1,2$, let
    $x_1,x_2\in V$ be those unique points with $p_i=x_i+f(x_i)$ for $i=1,2.$
    With $T_{p_i}\Sigma=(\Id+Df(x_i))(V)=(\Id+Dg(z_i))(U)$, 
    and $Df(x_i)(V)\subset V^\perp$, $Dg(z_i)(U)\subset U^\perp$ 
    for $i=1,2$, one obtains for $v\in V$ and $u\in U$
    $$
    v+Df(x_i)v \,,\, u+Dg(z_i)u\in T_{p_i}\Sigma\quad\Fo i=1,2,
    $$
    which implies $v+Df(x_i)v=F_{p_i}(v)$ and 
    $u+Dg(z_i)u=G_{p_i}(u)$ for $i=1,2.$
    Thus, it suffices to estimate
    $$
    \|Dg(z_1)-Dg(z_2)\|=\sup_{e\in U\cap\S}|(G_{p_1}-G_{p_2})(e)|.
    $$
    For any given unit vector $e \in U \cap \Sphere$, we set
    \begin{gather*}
        a_e = F_{p_1}e - F_{p_2}e \in V^{\perp} \,, \quad
        b_e = G_{p_1}e - G_{p_2}e \in U^{\perp} \,, \quad
        c_e = F_{p_1}e - G_{p_1}e \in T_{p_1}\Sigma \,,
    \end{gather*}
    \begin{gather*}
        \text{and} \quad
        \bar{a}_e= F_{p_1}(G_{p_1}e) - F_{p_2}(G_{p_1}e) 
        = G_{p_1}e - F_{p_2}(G_{p_1}e) \in V^{\perp} \,.
    \end{gather*}
    Since $F_{p_1}e \in e + V^{\perp}$, we have $e + V^{\perp} = F_{p_1}e +
    V^{\perp}$, which means that $F_{p_2}(F_{p_1}e) = F_{p_2}e$. 
    In~consequence we may write
    \begin{align}
        \label{eq:bar-a-e}
        |\bar{a}_e|
        &\le |(F_{p_1} - F_{p_2}) (G_{p_1}e - F_{p_1}e)| 
        + |F_{p_1}(F_{p_1}e) - F_{p_2}(F_{p_1}e)| \\
        &\le \|F_{p_1} - F_{p_2}\| |c_e| + |(F_{p_1} - F_{p_2})e| \,. \notag
    \end{align}
	\begin{figure}[!t]
	    \begin{center}
            \includegraphics*[totalheight=9.25cm]{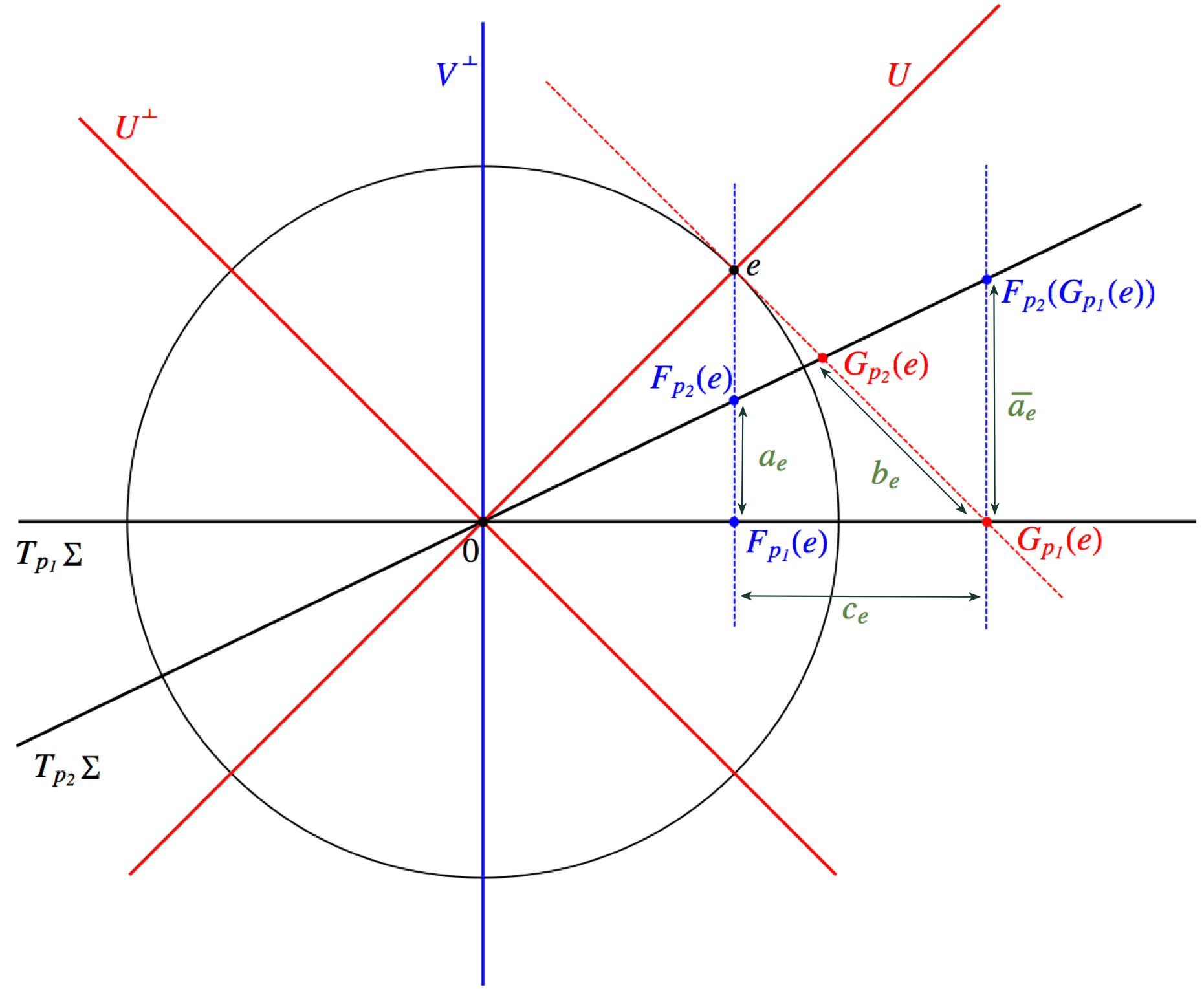}
	    \end{center}

	    \caption{\label{fig:tilting} A unit vector $e \in U \cap \Sphere$ 
        and its corresponding $a_e,b_e,c_e$ and $ \bar{a}_e$.}

	\end{figure}
    Recall that $\ang(U^{\perp},V^\perp)\le\vartheta$ by assumption, and by
    \eqref{angle-small} for $p:=p_1\in\Sigma$ that 
    $$
    \ang(U^\perp,T_{p_1}\Sigma^\perp)=\ang(U,T_{p_1}\Sigma)\le
    \frac{1}{\sqrt{2}}+\vartheta<\frac 34,
    $$
    so that we can apply Proposition \ref{prop:tales1} to the subspaces
    $X:=V^\perp$, $Y:=U^\perp,$ $Z:=T_{p_1}\Sigma$, and to the points
    $x:=F_{p_1}(e)-e\in V^\perp$, and $z:=c_e\in T_{p_1}\Sigma$ (hence
    $z-x=-(G_{p_1}(e)-e)\in U^\perp$) to arrive at
    \begin{equation}\label{eq:c-e}
        |c_e|
        \le 4\vartheta|F_{p_1}(e)-e|
        \le 4\vartheta\big(\|F_{p_1}\|+|e|\big)<12
        \vartheta.
    \end{equation}
    Combining~\eqref{eq:c-e} and~\eqref{eq:bar-a-e} we get
    \begin{gather*}
        |\bar{a}_e| < \|F_{p_1} - F_{p_2}\| (1 + 12 \vartheta) \,.
    \end{gather*}
    Observe that $\bar{a}_e - b_e = 
    G_{p_2}(e) - F_{p_2}\big(G_{p_1}(e)\big) \in
    T_{p_2}\Sigma$. Applying Proposition~\ref{prop:tales1} to 
    $z:=b_e-\bar{a}_e\in T_{p_2}\Sigma=:Z,$ $x:=b_e\in U^\perp=:X$ (with
    $z-x=-\bar{a}_e\in V^\perp=:Y$, so that
    $\ang(X,Y)=\ang(U^\perp,V^\perp)\le\vartheta=:\theta,$
    and
    $\ang(Y,Z^\perp)=\ang(V,T_{p_2}\Sigma)\le 1/\sqrt{2}=:\lambda<3/4$ by 
    \eqref{angle-quitesmall}) yields $|b_e-\bar{a}_e|<4\vartheta|b_e|,$
    and in consequence,
    $
    |G_{p_1}(e)-G_{p_2}(e)|=|b_e|\le |b_e-\bar{a}_e|+|\bar{a}_e|\le
    4\vartheta |b_e|+|\bar{a}_e|,
    $
    i.e., 
    $$
    |b_e|
    \le \tfrac{1}{1-4\vartheta}|\bar{a}_e|
    \le \tfrac{1+12\vartheta}{1-4\vartheta}
    \|F_{p_1}-F_{p_2}\|.
    $$
    Since $e \in U\cap\S^{n-1}$ was arbitrary, we  conclude
    \begin{gather}
        \label{eq:Dg-Holder}
        \|Dg(z_1) - Dg(z_2)\| 
        \le \tfrac{1 + 12 \vartheta}{1 - 4 \vartheta}\|F_{p_1}-F_{p_2}\| =
        \tfrac{1 + 12 \vartheta}{1 - 4 \vartheta}\|Df(x_1)-Df(x_2)\| \,. 
    \end{gather}
    At this point we know already that $\Sigma=\graph(f)=\graph(g)$ and that
    \begin{equation}\label{lipg}
        \lip(g)\le\frac{1+2\vartheta}{1-2\vartheta}.
    \end{equation}
    Exchanging~$U$ with~$V$ and~$f$ with~$g$, 
    and using \eqref{lipg} (instead of $\lip(f)\le 1$)
    in the derivation of~\eqref{eq:zx-comp} one shows
    \begin{gather}
        \label{eq:x2x1-z2z1-comp}
        |(x_2 - x_1) - (z_2 - z_1)| 
        \le \vartheta \left( 1 + \tfrac{1 + 2\theta}{1-2\theta} \right) |z_2 - z_1| 
        \le 3 \vartheta |z_2 - z_1| \,,
    \end{gather}
    whenever $x_i = V_\natural(p_i)$, $z_i = U_\natural(p_i)$ and $p_i = z_i +
    g(z_i) \in \Sigma$ for $i=1,2$. Set $s = \frac{1}{1+3\vartheta}r$. If $v\in
    U$, $x = V_\natural(v + g(v))$, $z_i \in U \cap \Ball{v}{s}$ and $x_i =
    V_\natural(z_i + g(z_i))$ for $i = 1,2$, then $|x_i - x| \le (1 + 3
    \vartheta)|z_i - v| < (1 + 3 \vartheta) s = r$
    by~\eqref{eq:x2x1-z2z1-comp}. Hence, employing~\eqref{eq:Dg-Holder}, 
    \eqref{eq:x2x1-z2z1-comp}, and our assumption on the
    oscillation of $Df$ on $V\cap\Kugel(v,r)$
    one~obtains
    \begin{multline*}
        \|Dg(z_1) - Dg(z_2)\|
        \overset{\eqref{eq:Dg-Holder}}{\le} \tfrac{1 + 12 \vartheta}{1 - 4 \vartheta}\|Df(x_1) - Df(x_2)\| 
        \\
        \le \tfrac{1 + 12 \vartheta}{1 - 4 \vartheta} L |x_1 - x_2|^{\alpha}
        \overset{\eqref{eq:x2x1-z2z1-comp}}{\le}
        \tfrac{(1 + 12 \vartheta)(1+3\vartheta)^\alpha}{1 - 4 \vartheta} 
        L |z_1 - z_2|^{\alpha} \,,
    \end{multline*}
    for all $z_1,z_2 \in U \cap \Ball{v}{\frac{1}{1+3\vartheta}r}$.
\end{proof}

\begin{proof}[Proof of Theorem~\ref{thm:compactness}]
    Applying Blaschke's selection theorem (cf.~\cite{Pri40}) to the
    sequence~$(\Sigma_j)_{j}$ contained in the class $\Cmn(R,L,d)$ we obtain
    a~subsequence (still denoted by $(\Sigma_j)_j$) that converges in
    the~Hausdorff metric to a~compact set~$\Sigma_0 \subset
    \CBall{0}{d}\subset\R^n$.
    Up to possibly choosing another subsequence we can assume that
    $\HD(\Sigma_0, \Sigma_{j+1}) \le \frac 12 \HD(\Sigma_0, \Sigma_j)$ for each
    $j \in \N \without \{0\}$. Then, by induction,
    \begin{gather*}
        \HD(\Sigma_0, \Sigma_{j+k}) \le 2^{-k} \HD(\Sigma_0, \Sigma_{j})
    \end{gather*}
    and in consequence, by the triangle inequality,
    \begin{gather}
        \label{eq:j-j+k}
        \HD(\Sigma_j, \Sigma_{j+k}) \ge (1 - 2^{-k}) \HD(\Sigma_0, \Sigma_{j})
    \end{gather}
    for $j,k \in \N \without \{0\}$.

    We will prove in the first step that $\Sigma_0\in\Cmn(R,L,d)$, that is,
    $\Sigma_0$ satisfies Definition~\ref{class-definition}, and then in the
    second step that the sequence $(\Sigma_j)_{j \in \N}$ converges to
    $\Sigma_0$ in $\Cmn$, i.e., converges in the sense of
    Definition~\ref{def:C1a-conv}.  Note that condition~\ref{cd:i:diam-bounds}
    of Definition~\ref{class-definition} for~$\Sigma_0$ and
    condition~\ref{C1a:i:HD-conv} of Definition~\ref{def:C1a-conv}
    for~$(\Sigma_j)_{j \in \N}$ are automatically satisfied.

    \emph{Step 1: $\Sigma_0\in\Cmn(R,L,d)$.}\,\, 
    The convergence of the $\Sigma_j$ in the Hausdorff metric implies that there
    is an index $j_0 \in \N$ such that
    \begin{gather}
        \label{eq:hd-bound}
        \HD(\Sigma_j,\Sigma_l) 
        < \min \left\{ 2^{-10} R^2 \,,\, L^{-2/\alpha} \,,\, 1 \right\}
        \quad \forall j,l \ge j_0 \,.
    \end{gather}
    Fix a point in the limit set $x_0 \in \Sigma_0$. Choose points $x_j \in
    \Sigma_j$ which realise the distance from $x_0$, i.e.
    \begin{gather}
        \label{eq:xj-dist}
        |x_0 - x_j| = \dist(x_0, \Sigma_j) \le \HD(\Sigma_0, \Sigma_j)
        \quad \text{for $j \in \N$.}
    \end{gather}
    Then $x_j \to x_0$ as $j \to \infty$ and
    \begin{equation}    
        \label{eq:xj-xj+k}    
        \begin{split}    
            |x_j - x_{j+k}| \le |x_j - x_0| + |x_0 - x_{j+k}|
            &\overset{\eqref{eq:xj-dist}}  {\le} 2 \HD(\Sigma_0, \Sigma_j) \\
            &\overset{\eqref{eq:j-j+k}} {\le} 4 \HD(\Sigma_j, \Sigma_{j+k})
            \quad\Foa j,k\ge 1.
        \end{split}
    \end{equation}
    Recalling~\eqref{eq:hd-bound}, we may apply Lemma~\ref{lem:hd-angle}
    with $A:=4$ to deduce, for the above $x_j \in \Sigma_j$ and $x_l \in
    \Sigma_l$ with $j,l \ge j_0$, the angle estimate
    \begin{equation}\label{angleconv}
        \ang(T_{x_j}\Sigma_j,T_{x_l}\Sigma_l)
        \le C_{\text{{\rm ang}}}(L,4)\HD(\Sigma_j,\Sigma_l)^{\alpha/2}\,.
    \end{equation} 
    Consequently, $(T_{x_j}\Sigma_j)_j$ is a Cauchy sequence in $G(n,m)$ that
    converges to some $T\in G(n,m),$ i.e.,
    \begin{equation}\label{conv-tangentplanes}
        \vartheta_j:= \ang(T_{x_j}\Sigma_j, T) \to 0 \As j \to \infty \,.
    \end{equation}

    Recall that by Definition \ref{class-definition} we find for each $j\in\N$ a
    function $f_j\in C^{1,\alpha}(T_{x_j}\Sigma_j,T_{x_j}\Sigma_j^\perp)$
    with $f_j(0)=0$, $Df_j(0)=0$, such that
    \begin{equation}\label{graphcont}
        \Sigma_j\cap\Kugel(x_j,R)=(x_j+\graph(f_j))\cap\Kugel(x_j,R),
    \end{equation}
    with the uniform estimates
    $\lip(f_j)\le 1$ and $\|Df_j(x)-Df_j(y)\|\le L|x-y|^\alpha$ 
    for all $x,y\in T_{x_j}\Sigma_j$.
    We can assume by \eqref{angleconv} that $\vartheta_j\in (0,1/100)$ 
    for all $j\ge j_0$,
    so that Lemma \ref{lem:tilted-graph} applied to 
    the radius $r=\infty$ leads to functions
    $g_j\in C^{1,\alpha}(T,T^\perp)$ such that $\graph(f_j)=\graph(g_j)$, 
    $g_j(0)=0$ and
    \begin{equation}\label{Dgj-est}
        \|Dg_j(\xi)-Dg_j(\eta)\|\le L_j|\xi-\eta|^\alpha
        \quad\textnormal{for all $ \xi,\eta\in T$ and $j\ge j_0,$}
    \end{equation}
    where $L_j:=L_{g_j}(L,\vartheta_j,\alpha)\to L$ as $j\to\infty.$ Moreover,
    \begin{equation}\label{lipgj}
        \lip(g_j)\le\frac{1+2\vartheta_j}{1-2\vartheta_j}\to 1 
        \quad\textnormal{on $T$},
        \AND \|Dg_j(0)\|^2\le\frac{\vartheta_j^2}{1-\vartheta_j^2}\to
        0\quad\textnormal{as $j\to\infty$.}
    \end{equation}
    In addition, \eqref{graphcont} translates into
    \begin{equation}\label{graphcontgj}
        \Sigma_j\cap\Kugel(x_j,R)=(x_j+\graph(g_j))\cap\Kugel(x_j,R)\quad\Foa j\ge j_0.
    \end{equation}
    Because of the uniform estimates \eqref{Dgj-est} and \eqref{lipgj} 
    we can repeatedly apply Arzela-Ascoli's theorem to  
    successively choose subsequences 
    $(j_{i+1})_{i+1}\subset (j_{i})_{i}$ for $i\in\N$, such that 
    $g_{j_i}$ converges in $C^1$ to a function
    $G_i\in C^{1,\alpha}(T\cap \CKugel(0,i),T^\perp)$ with $G_i(0)=0$, such that
    \begin{equation}\label{DGi-est}
        \|DG_i(\xi)-DG_i(\eta)\|\le L|\xi-\eta|^\alpha
        \quad\textnormal{for all $ \xi,\eta\in T\cap \CKugel(0,i),$ }
    \end{equation}
    and with
    \begin{equation}\label{lipGi}
        \lip(G_i)\le 1 \quad\textnormal{on $T\cap \CKugel(0,i)$}\AND DG_i(0)=0.
    \end{equation}
    In addition, one has $G_{i+1}|_{\CKugel(0,i)}=G_i$ for all $i\in\N.$
    Then the diagonal sequence $g_{j_j}$ converges 
    in $C^1_\textnormal{loc}(T,T^\perp)$ to some
    limit function $G\in C^{1,\alpha}(T,T^\perp)$ satisfying 
    $G(0)=0$, $DG(0)=0$, $\lip(G)\le 1$ on $T$, and the estimate
    \begin{equation}\label{DG-est}
        \|DG(\xi)-DG(\eta)\|\le L|\xi-\eta|^\alpha
        \quad\textnormal{for all $ \xi,\eta\in T.$ }
    \end{equation}
    Applying \eqref{graphcontgj} to the diagonal sequence 
    $(g_{j_j})_{j_j}\subset (g_j)_j$ combined with
    \eqref{eq:xj-dist} one finds
    \begin{equation}\label{graphcontG}
        \Sigma_0\cap\Kugel(x_0,R)=(x_0+\graph(G))\cap\Kugel(x_0,R),
    \end{equation}
    which concludes Step 1 since $\Sigma_0$ is represented near the arbitrarily
    chosen point $x_0 \in \Sigma_0$ as a graph of the $C^{1,\alpha}$ function $G
    : T \to T^\perp$ satisfying all the requirements in
    Definition~\ref{class-definition} observing, in addition, that since $x
    \mapsto x_0 + x + G(x)$ parameterises $\Sigma_0$ locally near $x_0$ one has
    $T_{x_0}\Sigma_0=(\Id+DG(0))(T)=T$ s ince $DG(0)=0$, which a posteriori
    shows that the $m$-plane $T$ does not depend on the sequence $x_j\to x_0$.

        \emph{Step 2:  $\Sigma_j$ converges in $\Cmn$ to $\Sigma_0$.
        }\,\, 
        It suffices  to check  condition (iii) of Definition~\ref{def:C1a-conv}.
        Let $j_2 \in \N$, $j_2 \ge 200$ be such that
        \begin{gather}
            \label{eq:j2-choice}
            \HD(\Sigma_j,\Sigma_0) 
            < \min \left\{
                2^{-10} R^2 \,,\, L^{-2/\alpha} \,,\, 2^{-8} R \,,\, \big( 2^{-7} C_{\text{{\rm ang}}}(L,4)^{-1}\big)^{2/\alpha} \,,\, 2^{-8}
            \right\}
            \quad \text{for $j \ge j_2$} \,.
        \end{gather}
        Fix $x \in \Sigma_0$ and set $T = T_{x}\Sigma_0$. As before for each $j
        \in \N$ find $x_j \in \Sigma_j$ such that $|x - x_j| = \dist(x,\Sigma_j)
        \le \HD(\Sigma_0,\Sigma_j)$ and let $f_j : T_{x_j}\Sigma_j \to
        T_{x_j}\Sigma_j^{\perp}$  and $f_0:=f_x:T\to T^\perp$ be the functions
        whose existence is guaranteed by condition~\ref{cd:i:graph-patches} of
        Definition~\ref{class-definition}. 
        According to Lemma~\ref{lem:hd-angle} (generously for  $A=4$), 
        we get by \eqref{eq:j2-choice}
        \begin{gather}
            \label{eq:x0-xj-angle}
            \ang(T_{x}\Sigma_0, T_{x_j}\Sigma_j) \le C_{\text{{\rm ang}}}(L,4)
            \HD(\Sigma_0,\Sigma_j)^{\alpha/2} \overset{\eqref{eq:j2-choice}}{<}
            2^{-7} < \tfrac 1{100} 
            \quad \text{for $j \ge j_2$.}
        \end{gather}
        An  application of Lemma~\ref{lem:tilted-graph}  
        yields now functions $g_j \in
        C^{1,\alpha}(T,T^\perp)$ such that $\graph(g_j) = \graph(f_j)$ and
        $\lip(g_j) \le \frac{51}{49} < 2$ for each $j \ge j_2$, or $j=0$
        where $g_0=f_0$. Set
        $h_j(\eta) = g_j(\eta - T_\natural(x_j - x)) + T^{\perp}_\natural(x_j -
        x)$ for $\eta \in T$ and $j \ge j_2$, and for $j=0$ we have $h_0=g_0=f_0$,
        so that $x_j + \graph(g_j) =
        x + \graph(h_j)$ and consequently, recalling~\eqref{eq:j2-choice},
        \begin{gather}
            \label{eq:hj-graph}
            \Sigma_j \cap \Ball{x}{(1 - 2^{-8}) R} = 
            (x + \graph(h_j)) \cap \Ball{x}{(1 - 2^{-8}) R}
            \quad \text{for $j \ge j_2$ or $j=0$.}
        \end{gather}
        Set  $\rho := \min\{\frac 1{12} R,\frac 12 (2^{-7}/L)^{1/\alpha}\} $ 
        and note that
        \begin{gather}
            \label{eq:hj-graph-rho}
            \{ x + \eta + h_j(\eta) : \eta \in T \cap \Ball{0}{3\rho} \} \subseteq
            \Sigma_j \quad \text{for $j \ge j_2$ or $j=0$,}
        \end{gather}
        because $\lip(g_j) = \lip(h_j) < 2$. Let $\eta \in T \cap
        \Ball{0}{2\rho}$, $j \in \N$, $j \ge j_2$ and set $p := x + \eta +
        h_0(\eta) \in \Sigma_0$ and~$q:= x + \eta + h_j(\eta) \in
        \Sigma_j$. There exists $z \in \Sigma_j$ with $|z - p| \le \HD(\Sigma_0,
        \Sigma_j)$. By~\eqref{eq:j2-choice} we have $\HD(\Sigma_0, \Sigma_j) <
        \rho$, and if we write $z=x+\xi+h_j(\xi)$, then
        $\eta - \xi= T_\natural(p-x)-T_\natural(z-x)$ so that $|\eta - \xi| \le
        |z-p| \le \HD(\Sigma_0, \Sigma_j)$, and therefore $\xi \in T \cap
        \Ball{0}{3\rho}$.
        Since $\lip(h_j) < 2$
        we obtain 
        \begin{multline}
            \label{eq:hj-unif}
            |h_0(\eta) - h_j(\eta)| = |p-q| 
            \le |p - z| + |z - q|
            \\
            \le \HD(\Sigma_0, \Sigma_j) + |\eta - \xi| + |h_j(\eta) - h_j(\xi)|
            < 4 \HD(\Sigma_0, \Sigma_j) \,.
        \end{multline}
        We already know that $\Sigma_0 \in \Cmn(R,L,d)$ so employing
        Lemma~\ref{lem:hd-angle}  for $A=4$, we get
        \begin{gather}\label{TpTq}
            \ang(T_{p}\Sigma_0, T_{q}\Sigma_j) 
            \le C_{\text{{\rm ang}}}(L,4) \HD(\Sigma_0, \Sigma_j)^{\alpha/2}
            \quad \text{ for $j \ge j_2$ or $j=0$.}
        \end{gather}
        Apply~\cite[8.9(5)]{All72} with $\eta_1:= Dh_j(\eta)$, $\eta_2 := Dh_0(\eta)$, $S_1:=T_q\Sigma_j$,
        $S_2:=T_p\Sigma_0$,
        and $S := T$ to obtain, recalling~\eqref{eq:x0-xj-angle} and $\lip(h_0) =
        \lip(f_0) \le 1$,
        \begin{multline}
            \label{eq:Dhj-uni}
            \| Dh_j(\eta) - Dh_0(\eta) \|^2 
            \le \frac{\ang(T_{q}\Sigma_j, T_{p}\Sigma_0)^2}
            {1 - \ang(T_{q}\Sigma_j, T)^2}
            \left( 1 + \|Dh_0(\eta)\|^2 \right)
            \\
            \le \frac{2}{ 1 - \ang(T_{q}\Sigma_j, T)^2}
            C_{\text{{\rm ang}}}(L,4)^2 \HD(\Sigma_j,\Sigma_0)^{\alpha} \,.
        \end{multline}
        To analyze the term in the denominator we
        estimate using \eqref{TpTq} and \eqref{eq:j2-choice}
        $$
        \ang(T_q\Sigma_j,T)
        \le \ang(T_q\Sigma_j,T_p\Sigma_0) + \ang(T_p\Sigma_0,T)
        \overset{\eqref{TpTq},\eqref{eq:j2-choice}}{\le} 2^{-7} + \ang(T_p\Sigma_0,T).
        $$
        For the last summand we again use \cite[8.9(5)]{All72}, this time for
        $S:=T$, $\eta_1:=Dh_0(\eta)$, $S_1:=T_p\Sigma_0=(\Id+ \eta_1)(T)$,
        $\eta_2:=0$, and $S_2:=(\Id+\eta_2)(T)=T$, to deduce by virtue of
        $Dh_0(0)=Df_0(0)=0$ the angle estimate
        $$
        \ang(T_p\Sigma_0,T)
        \le \|Dh_0(\eta)\|\le L|\eta|^\alpha
        \overset{\eqref{eq:j2-choice}}{\le} 2^{-7}
        $$
        by our choice of $\rho$. Therefore we can insert the resulting estimate
        $\ang(T_q\Sigma_j, T)^2\le 2^{-12}$ into \eqref{eq:Dhj-uni} to obtain
        \begin{equation}
            \label{eq:Dhj-unif}
            \| Dh_j(\eta) - Dh_0(\eta) \|^2 
            \le 3C_{\text{{\rm ang}}}(L,4)^2 \HD(\Sigma_j,\Sigma_0)^{\alpha}.
        \end{equation}

        Since $\eta \in T \cap \Ball{0}{2\rho}$ and $j \ge j_2$ were chosen
        arbitrarily, the estimates~\eqref{eq:hj-unif} and~\eqref{eq:Dhj-unif}
        hold for any $\eta \in T \cap \Ball{0}{2\rho}$ and $j \ge j_2$. Fix a
        smooth cutoff function $\varphi : T \to \R$ such that $\varphi(\eta) =
        1$ for $\eta \in T \cap \Ball{0}{\rho}$ and $\varphi(\eta) = 0$ for
        $\eta \in T \without \Ball{0}{2\rho}$. For $j \ge j_2$ and for $j=0$
        define $f_{x,j} \in C^{1,\alpha}(T,T^\perp)$ by $f_{x,j}(\eta) :=
        h_j(\eta) \varphi(\eta)$ for $\eta \in T \cap \Ball{0}{2\rho}$ and
        $f_{x,j}(\eta) = 0$ for $\eta \in T \without \Ball{0}{2\rho}$.
        Estimates~\eqref{eq:hj-unif} and~\eqref{eq:Dhj-unif} show that the
        sequence $(f_{x,j})_{j \in \N}$ converges in $C^1(T,T^\perp)$ to the
        function~$f_{x,0}$. Since the limit function $f_{x,0}=\varphi h_0$ is of
        class $ C^{1,\alpha}$, it follows that $(f_{x,j})_{j \in \N}$ actually
        converges in $C^{1,\alpha'}$ for any $\alpha' \in (0,\alpha)$. Moreover,
        by~\eqref{eq:hj-graph} and~\eqref{eq:hj-graph-rho} one sees that
        \begin{gather*}
            \Sigma_j \cap \Ball{x}{ \rho} = 
            (x + \graph(f_{x,j})) \cap \Ball{x}{ \rho}
            \quad \text{for $j \ge j_2$ or $j=0$.}
        \end{gather*}
        Therefore, $(\Sigma_j)_{j \in \N}$ satisfies condition~\ref{C1a:i:param-conv} of
        Definition~\ref{def:C1a-conv} and the proof is complete.
    \end{proof}

    \section{Isotopies, tubular neighbourhoods and diffeomorphisms}\label{sec:4}

    To prove Theorem~\ref{thm:diff} we proceed as in~\cite[Chapter~4,
    Section~5]{Hir94}.

    We assign to each $V \in G(n,n-m)$  an orthogonal projection 
    $V_{\natural} \in \Hom(\R^n,\R^n)$ onto~$V$.  
    By~\cite[3.1.19(2)]{Fed69} the set
    \begin{gather*}
        \Gr = \{ P \in \Hom(\R^n,\R^n) : P \circ P = P,\, P^* = P,\, 
        \trace P = n-m  \} 
    \end{gather*}
    is a~$C^{\infty}$-submanifold of~$\R^{n^2}$, and the mapping $V \mapsto
    V_{\natural}$ is a~$C^{\infty}$-diffeomorphism \emph{and an
      isometry}. 

    \begin{defin}
        \label{def:e-normal}
        Let $\Sigma \subset \R^n$ be an $m$-dimensional
        $C^1$-submanifold of~$\R^n$ and $\varepsilon > 0$. 
        A map $\Phi : \Sigma \to \Gr$ is called an
        \emph{$\varepsilon$-normal map for $\Sigma$} if $\Phi$ is  $C^1$-smooth,
        $\lip(\Phi) < \infty$ and if
        \begin{gather*}
            \| \Phi(x) - (T_x\Sigma)^{\perp}_{\natural} \| \le \varepsilon \quad
            \forall x\in \Sigma \,.
        \end{gather*}
    \end{defin}

    \begin{lem}[\textbf{nearly normal spaces of class $\mathbf{C^1}$}]
        \label{lem:mollify}
        Let $L,R,d>0$, $\alpha\in (0,1],$ and $\Sigma \in \Cmn(R,L,d)$. 
        Then there exists a constant  $C = C(L,R,\alpha,m,n) \ge 1$ such that for~each
        $\varepsilon \in (0,1]$ there is an $\varepsilon$-normal map 
        $\Phi^{\varepsilon}[\Sigma] : \Sigma \to \Gr$ for $\Sigma$ satisfying,
        in addition, 
        $\lip(\Phi^{\varepsilon}[\Sigma]) \le C \varepsilon^{-1/\alpha}$.
    \end{lem}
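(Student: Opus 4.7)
I would construct $\Phi^\varepsilon[\Sigma]$ by mollifying the Gauss map $G \colon \Sigma \to \Gr$, $G(x) := (T_x\Sigma)^\perp_\natural$, and then projecting the result back onto $\Gr$. The key preliminary observation is that $G$ is $\alpha$-H\"older continuous with a constant $\Lambda$ depending only on $L$ and $R$. Indeed, for $x,y \in \Sigma$ with $|x-y| < R$, condition~\ref{cd:i:graph-patches} of Definition~\ref{class-definition} lets me set $v := (T_x\Sigma)_\natural(y-x)$ and write $y = x + v + f_x(v)$ with $|v| \le |y-x|$, so that $T_y\Sigma = (\Id + Df_x(v))(T_x\Sigma)$. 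Allard's estimate \cite[8.9(5)]{All72}, applied with $\eta_1 := Df_x(v)$ and $\eta_2 := 0$, together with $Df_x(0) = 0$, yields $\ang(T_x\Sigma, T_y\Sigma) \le \|Df_x(v)\| \le L|y-x|^\alpha$; by Remark~\ref{rem:angle-norm} the left-hand side equals $\|G(x) - G(y)\|_{\R^{n^2}}$, and the trivial bound $\ang \le 2$ handles $|x-y| \ge R$.

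For the mollification I would fix a small absolute constant $c$ (to be pinned down later) and set $\delta := (c\varepsilon/\Lambda)^{1/\alpha}$. Pick a maximal $\delta$-separated subset $\{y_i\}_{i \in I} \subset \Sigma$: the balls $\Ball{y_i}{\delta}$ then cover $\Sigma$ by maximality, and a standard packing argument (using the disjointness of $\Ball{y_i}{\delta/2}$) bounds the multiplicity of $\{\Ball{y_i}{2\delta}\}_{i \in I}$ by some $N = N(n)$. Let $\{\chi_i\}_{i \in I}$ be a $C^\infty$ partition of unity in $\R^n$ subordinate to this cover, with $\lip(\chi_i) \le C(n)/\delta$, and define
\begin{gather*}
    \tilde\Phi(x) := \sum_{i \in I} \chi_i(x)\, G(y_i), \qquad x \in \R^n.
\end{gather*}
Writing $\tilde\Phi(x) - G(x) = \sum_i \chi_i(x) (G(y_i) - G(x))$ (using $\sum_i \chi_i \equiv 1$ on $\Sigma$), every non-vanishing summand has $|y_i - x| < 2\delta$, so $\|\tilde\Phi(x) - G(x)\| \le \Lambda(2\delta)^\alpha \le 2^\alpha c \varepsilon$ for $x \in \Sigma$. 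The telescoping identity $\tilde\Phi(x) - \tilde\Phi(x') = \sum_i (\chi_i(x) - \chi_i(x'))(G(y_i) - G(x))$, combined with bounded multiplicity, gives $\lip(\tilde\Phi) \le 2N \cdot (C(n)/\delta) \cdot \Lambda(2\delta)^\alpha \le C(L,R,n)\,\delta^{\alpha-1}$. Substituting back the choice of $\delta$ yields $\lip(\tilde\Phi) \le C(L,R,\alpha,n)\,\varepsilon^{(\alpha-1)/\alpha} \le C\varepsilon^{-1/\alpha}$ for $\varepsilon \in (0,1]$, since $\varepsilon^{(\alpha-1)/\alpha}/\varepsilon^{-1/\alpha} = \varepsilon \le 1$.

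To obtain a map into $\Gr$ rather than $\R^{n^2}$, I would compose with the nearest-point projection. As $\Gr$ is a compact $C^\infty$ submanifold of $\R^{n^2}$, it admits a tubular neighbourhood $U \supset \Gr$ of positive reach $r_0 = r_0(m,n)$ on which $\pi_\Gr \colon U \to \Gr$ is $C^\infty$ and Lipschitz with a constant $C(m,n)$. Choosing $c$ small enough that $2^\alpha c \le r_0$ ensures $\tilde\Phi(\Sigma) \subset U$ for every $\varepsilon \in (0,1]$, so I set $\Phi^\varepsilon[\Sigma] := \pi_\Gr \circ \tilde\Phi|_\Sigma$. This map is $C^1$, takes values in $\Gr$, and satisfies both $\|\Phi^\varepsilon[\Sigma](x) - G(x)\| \le C(m,n)\|\tilde\Phi(x) - G(x)\| \le \varepsilon$ (after a further harmless rescaling of $c$) and $\lip(\Phi^\varepsilon[\Sigma]) \le C(L,R,\alpha,m,n)\,\varepsilon^{-1/\alpha}$. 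The main technical step is precisely the projection onto $\Gr$, which is handled once and for all by the existence of its tubular neighbourhood; the remainder is careful bookkeeping of the constants, and the stated exponent $-1/\alpha$ is strictly weaker than the $(\alpha-1)/\alpha$ that the construction actually produces, which provides slack in matching the bound.
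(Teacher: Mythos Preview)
Your proof is correct and follows the same overall strategy as the paper: establish H\"older continuity of the Gauss map $x\mapsto (T_x\Sigma)^\perp_\natural$, mollify at scale $\delta\sim(\varepsilon/\Lambda)^{1/\alpha}$, and compose with the nearest-point projection onto the smooth compact submanifold $\Gr\subset\R^{n^2}$. The only difference is the mollification device: the paper first extends the Gauss map componentwise to all of $\R^n$ (preserving the H\"older constant) and then convolves with a standard mollifier, whereas you smooth directly on $\Sigma$ via a partition of unity with frozen values $G(y_i)$; both are standard and yield the same estimates. Your observation that the construction actually gives the sharper Lipschitz bound $C\varepsilon^{(\alpha-1)/\alpha}$ is correct, and the paper's convolution argument would give the same improvement had it used $\int D\phi_r\,dz=0$ together with the H\"older estimate for $\Phi_1$ rather than the crude sup bound~\eqref{phi1bound}.
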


    \begin{rem*}
     A similar statement for smooth manifolds (including the $C^1$-case)
        can be found in \cite[Thm. 10A, p.121]{whitney-book_1957}, 
        but for the convenience
        of the reader, and to emphasise how the constants depend quantitatively
        on the parameters determining the class $\Cmn(R,L,d)$ we provide
        the full argument here.
        We are going to 
        construct $\Phi^{\varepsilon}[\Sigma]$ simply by mollifying the map
        $x \mapsto (T_x\Sigma)_{\natural}$. 
        Note that since $\Sigma$ is embedded we do not
        need to use the \emph{center of mass} tool known from Riemannian geometry,
        which was used in~\cite{Bre12}.
    \end{rem*}

    \begin{proof}
        For $\Phi_0 : \Sigma \to \Gr$ given by $\Phi_0(x) :=
        (T_x\Sigma^{\perp})_{\natural}$ for $x\in\Sigma$, we first prove a simple 
        H\"older estimate as follows.
        For $x,y\in\Sigma $ we find
        \begin{gather}
            \label{eq:Phi0-holder}
            \| \Phi_0(x) - \Phi_0(y) \|
            =\|(T_x\Sigma)^\perp_\natural- (T_y\Sigma)^\perp_\natural\|
            \le \frac{2|x-y|^\alpha}{\min\{R^\alpha,(L\sqrt{2})^{-1}\}}
        \end{gather}
        if $|x-y|^\alpha\ge\min\{R^\alpha,(L\sqrt{2})^{-1}\}.$ If not, then
        $y\in \Ball{x}{R}$ so that we can use the local graph representation
        $$
        \Sigma\cap\Ball{x}{R}=(x+\graph(f))\cap\Ball{x}{R}
        $$
        to express the point $y$ as $y=x+\xi+f(\xi)$ for some $\xi\in T_x\Sigma$
        and the function $f:=f_x\in C^{1,\alpha}(T_x\Sigma,T_x\Sigma^\perp)$
        with $f(0)=0,$ $Df(0)=0$, $\lip(f)\le 1,$ and the H\"older estimate on
        $Df$ as in Definition~\ref{class-definition}. In other words, the
        mapping $F(\xi):=x+\xi + f(\xi)$ for $\xi\in T_x\Sigma$ parameterises
        $\Sigma$ over the tangent plane $T_x\Sigma$ locally near $x$, so that
        its differential $DF(\xi):T_x\Sigma \to T_y\Sigma $ can be used to
        estimate for an orthonormal basis $\{e_1,\ldots,e_m\}$ of $T_x\Sigma$
        \begin{multline*}
            \dist(e_i,T_y\Sigma)  \le  |e_i-DF(\xi)e_i|=|e_i-(\Id +Df(\xi))e_i|\\
            \le  \|Df(\xi)-Df(0)\|\le L|\xi|^\alpha\le L|(x+\xi+f(\xi))-x|^\alpha
            =L|y-x|^\alpha
            \quad\forall i=1,\ldots,m,
        \end{multline*}
        where we also used that $f(\xi)\perp \xi$ by definition of $f$.  Since
        $|y-x|^\alpha<\min\{R^\alpha,(L\sqrt{2})^{-1}\}\le
        (L\sqrt{2})^{-1}$ 
        in the present case, we can apply a quantitative linear 
        algebra estimate  \cite[Prop.~2.5]{KStvdM-GAFA} to find 
        a constant $C=C(m)$ such that
        $$
        \|\Phi_0(x)-\Phi_0(y)\|=\ang(T_x\Sigma,T_y\Sigma)\le C(m)L|x-y|^\alpha.
        $$
        Combining both cases leads to the desired H\"older estimate 
        for $\Phi_0$ with H\"older constant
        $$
        C_0=C_0(L,R,\alpha,m)
        :=\max\Big\{\frac{2}{\min\{(L\sqrt{2})^{-1},R^\alpha\}},C(m)L
        \Big\}.
        $$
        Notice that the constant $C_0$ does not depend on $R$ or 
        $\alpha$ if $R^\alpha\ge (L\sqrt{2})^{-1}.$

        Choosing an orthonormal
        coordinate system in~$\R^n$ we can represent $\Phi_0$ as an 
        $(n\times n)$-matrix
        of~functions $(\Phi_0^{ij})_{i,j = 1}^{n}$ and extend each
        $\Phi_0^{ij}$ to all of~$\R^n$ by setting
        \begin{gather*}
            \Phi_1^{ij}(x) 
            = \inf_{z\in\Sigma} \{ \Phi_0^{i,j}(z) + C_0|z-x|^{\alpha}\}
        \end{gather*}
        preserving the same H\"older exponent $\alpha$ and  H\"older constant
        $C_0$ for each $i,j=1,\ldots,n$ (cf. for the proof of
        \cite[Theorem 1, p.80]{EG-book_1992} which carries over to all 
        $\alpha\in (0,1]$).
        The matrix $(\Phi_1^{ij})_{i,j=1}^n$ represents the H\"older
        continuous mapping $\Phi_1:\R^n\to\Hom(\R^n,\R^n)$ with $\Phi_1|_\Sigma=
        \Phi_0$ and the estimate
        \begin{gather}
            \label{eq:Phi1-holder}
            \| \Phi_1(x) - \Phi_1(y) \| \le C_1 |x-y|^{\alpha} 
            \quad\forall x,y\in\R^n\,,
        \end{gather}
        where $C_1=C_1(L,R,\alpha,m,n) := n C_0(L,R,\alpha,m)$.  
        Now let $\phi \in C_0^\infty(\Ball{0}{1})$ with $\phi(x)=1$ for all
        $x\in\Ball{0}{1/2}),$ $0\le\phi(x)\le 1$ and $|\nabla\phi(x)|\le 4$ for
        all $x\in \Ball{0}{1}$, and $\int_{\R^n}\phi(x)\,dx=1,$ and consider for
        $r>0$ the usual scaling $\phi_r(x):=r^{-n}\phi(x/r)$ to define the
        convolution $\Phi_{2,r}: \R^n\to\Hom(\R^n,\R^n)$ as
        \begin{gather*}
            \Phi_{2,r}(x) = \phi_r * \Phi_1(x) 
            = \int_{\R^n} \phi_r(x-z)\Phi_1(z)\,dz \,.
        \end{gather*}
        Since 
        $$
        \|\Phi_1(z)\|\le
        \|\Phi_1(x)\|+C_1|x-z|^\alpha =
        \|\Phi_0(x)\|+C_1|x-z|^\alpha\le 1+C_1r^\alpha\le 1+2C_1
        $$
        for all $x\in\Sigma$, $z\in\Ball{x}{r},$ $r\in (0,2]$, we find 
        \begin{equation}\label{phi1bound} 
            \|\Phi_1(\cdot)\|\le 1+2C_1\quad\textnormal{on $\Sigma+\Ball{0}{2}$},
        \end{equation}
        where the constant on the right-hand side depends on $L,R,\alpha,m,$ and
        $n$.
        Therefore, we can estimate for $x,y\in\Sigma+\Ball{0}{1}$, $e\in\S^{n-1}$,
        $r\in (0,1)$,
        \begin{equation}\label{sup-Phi2r} 
            |\Phi_{2,r}(x)e|=\Big|\int_{\R^n}\phi_r(x-z)\Phi_1(z)e\,dz\Big|
            \le\int_{\Ball{x}{r}}\|\Phi_1(z)\|\phi_r(x-z)\,dz
            \le 1+2C_1,
        \end{equation} 
        because $\dist(z,\Sigma)\le |z-x|+1<2$ for all $z\in\Ball{x}{r},$
        whence
        \begin{equation}\label{osc-Phi2r-nonlocal}
            \|\Phi_{2,r}(x)-\Phi_{2,r}(y)\|\le 2(1+2C_1)\le 2(1+2C_1)\frac{|x-y|}{r}
        \end{equation}
        for all $x,y\in\Sigma+\Ball{0}{1}$ with $|x-y|\ge r,$ $r\in (0,1).$
        On the other hand, for $x,y\in\Ball{0}{1}+\Sigma$ with
        $|x-y|<r$ and for $e\in\S^{n-1}$ one estimates
        \begin{eqnarray}
            \big|\big(\Phi_{2,r}(x)-\Phi_{2,r}(y)\big)e\big| & = &
            \Big|\int_0^1\nabla\Phi_{2,r}(tx+(1-t)y)\cdot (x-y)e\,dt\Big|\notag\\
            &\le & \frac{1}{r^{n+1}}\int_0^1\int_{\Ball{tx+(1-t)y)}{r}}\big|
            \nabla\phi\big(\tfrac{tx+(1-t)y-z}{r}\big)\big|\|\Phi_1(z)\|\,dzdt|x-y|.
            \label{osc-Phi2r-local}
        \end{eqnarray}
        Since $\dist(tx+(1-t)y,\Sigma)\le\dist(x,\Sigma)+(1-t)|x-y|$ for all $t\in
        [1/2,1]$ and $\dist(tx+(1-t)y,\Sigma)\le\dist(y,\Sigma)+t|x-y|$ for all
        $t\in [0,1/2]$ we find $\dist(tx+(1-t)y,\Sigma)<1+r/2$ for all $t\in [0,1]$;
        hence $\dist(z,\Sigma)<1+ 3r/2 <2$ for all $z\in \Ball{tx+(1-t)y}{r}$, $r\in
        (0,2/3)$, which implies $\|\Phi_1(z)\|\le 1+2C_1$ for such $z$ by virtue
        of~\eqref{phi1bound},
        which inserted in
        \eqref{osc-Phi2r-local} gives
        \begin{equation}\label{eq:Phi2-lip}
            \begin{split}
                \|\Phi_{2,r}(x) - \Phi_{2,r}(y)\|
                \le 4(1+2C_1)\frac{\omega_n}{r}|x-y|
                &   \le 24(1+2C_1) \frac{|x-y|}{r}\\
                & =: C_2(L,R,\alpha,m,n)\frac{|x-y|}{r}
            \end{split}
        \end{equation}

        \noindent
        for all $x,y\in\Sigma+\Ball{0}{1}$ and $r\in (0,2/3)$. (We have
        used that the volume $\omega_n$ of the $n$-dimensional unit ball is at most $6$ for all $n=1,2,\ldots$)

        Furthermore, for $x\in\Sigma$,
        $$
        \Phi_0(x)-\Phi_{2,r}(x) 
        = \int_{\R^n} \big(\Phi_0(x)-\Phi_1(z)\big)\phi_r(x-z) \,dz
        = \int_{\Ball{x}{r}} \big(\Phi_1(x)-\Phi_1(z)\big)\phi_r(x-z)\,dz,
        $$
        since $\Phi_1|_\Sigma=\Phi_0$, so that by \eqref{eq:Phi1-holder}
        \begin{equation}\label{eq:Phi2-dist}
            \|\Phi_0(x) - \Phi_{2,r}(x)\|
            \le C_1\int_{\Ball{x}{r}}|x-z|^\alpha\phi_r(x-z)\,dz
            \le C_1r^\alpha<C_2r^\alpha\quad\forall x\in\Sigma,\, r>0.
        \end{equation}
        Since $\Gr$ is a~$C^{\infty}$-submanifold
        of~$\Hom(\R^n,\R^n)\simeq\R^{n^2}$, it has positive reach
        $r_{\Gr}=r_{\Gr}(m,n)>0$ in the sense of Federer \cite[Definition
        4.1]{federer_1959} such that the nearest point projection
        $P_\Gr:\Gr+\Ball{0}{r_\Gr}\to\Gr$ is $C^\infty$-smooth; see, e.g.,
        \cite[Lemma, p. 153]{Foote84}\footnote{Formally, Foote 
          \cite[Lemma, p.  153]{Foote84} mentions only \emph{a neighbourhood}
          of the manifold $M$. However, this neighbourhood is defined via
          an application of the inverse function theorem, which -- in light
          of Federer \cite[Theorem 4.8(13)]{federer_1959} -- is possible
          on the whole $\Gr+\Ball{0}{r_\Gr}$.}.
        In addition, for any $\delta\in
        (0,r_\Gr/2]$ it follows from \cite[Theorem 4.8(8)]{federer_1959} that
        $P_\Gr$ has Lipschitz constant $\lip(P_\Gr)\le 2$ on
        $\Gr+\Ball{0}{\delta}$.

        According to \eqref{eq:Phi2-dist} the map 
        $\Phi_{3,r}:=P_\Gr\circ\Phi_{2,r}|_\Sigma$
        maps $\Sigma$ into $\Gr$ if $C_2r^\alpha\le r_\Gr/2.$ 
        Now choose for a given
        $\varepsilon \in (0,1]$ first
        \begin{equation}
            \label{def:d0reach}
            \delta_0=\delta_0(L,R,\alpha,m,n)
            :=\min\{r_\Gr,(2/3)^\alpha C_2,1\},
        \end{equation} and
        then $r_\varepsilon:=(\delta_0/(2C_2))^{1/\alpha}\varepsilon^{1/\alpha}\in
        (0,2/3). $ Then $\Phi^\varepsilon
        [\Sigma]:=\Phi_{3,r_\varepsilon}:\Sigma\to\Gr$ as a composition of
        $C^1$-maps is also of class $C^1$, and according to \eqref{eq:Phi2-lip} with
        Lipschitz constant
        \begin{equation}
            \label{def:lipnorm}
            \lip(\Phi^\varepsilon [\Sigma])
            \le \lip(P_\Gr)C_2/r_\varepsilon
            \le
            (2C_2)^{1+(1/\alpha)}(\delta_0\varepsilon)^{-1/\alpha}
            =:C(L,R,\alpha,m,n)\varepsilon^{-1/\alpha}
        \end{equation}
        Finally, $\Phi^\varepsilon[\Sigma]$ is an 
        $\varepsilon$-normal map for $\Sigma$,
        since by \eqref{eq:Phi2-dist}
        $$
        \|\Phi^\varepsilon[\Sigma](x) 
        - (T_x\Sigma)^\perp_\natural\|=
        \|P_\Gr\circ\Phi_{2,r_\varepsilon}(x) -  
        P_\Gr\circ\Phi_0(x)\|\le\lip(P_\Gr)\|
        \Phi_{2,r_\varepsilon}(x) - \Phi_0(x)\| 
        \overset{\eqref{eq:Phi2-dist}}{<}
        2C_2r_\varepsilon^{\alpha}\le\varepsilon.
        $$
    \end{proof}

    \begin{rem} 
        \label{rem:lipnorm}	
        An inspection of the proof yields $C_0 \le C(m)L + 2R^{-\alpha} \le
        c(m,n,l,p)(L+1)$, $\alpha=1-p_0(\E)/p$, whenever $R,L$ are given by
        \eqref{RLfromEp} for an energy threshold $E\ge\E(\Sigma)$ for a
        particular energy $\E\in\{\E^l_p,\TP_p,\TP^G_p\}$. This gives $C_2\le
        c(m,n,l,p)(E^{1/p} +1)$. Assuming w.l.o.g. that $C_2 \ge \frac 32$,
        we obtain $\delta_0$ in \eqref{def:d0reach} unrelated to $C_2$, and
        finally, for a fixed $\eps\in (0,\frac{1}{100})$ and
        $\alpha=\alpha(p)=1-p_0(\E)/p$,
        \[
        \lip(\Phi^\varepsilon [\Sigma])
        \le c(m,n,l,p) (E^{1/p}+1)^{1+(1/\alpha)} \delta_0^{-1/\alpha}, 
        \qquad\mbox{where}\quad \delta_0=\min\{r_\Gr,1\}.
        \]
    \end{rem}

    \begin{defin}
        \label{def:Psi-N}
        Let $R,L,d>0$, and $\alpha\in (0,1]$, $\Sigma \in \Cmn(R,L,d)$, and for
        some $\varepsilon\in (0,1/100)$ let $\Phi:\Sigma\to\Gr$ 
        be an $\varepsilon$-normal map for $\Sigma$. 
        For $\delta >0 $ define the \emph{$\delta$-normal neighbourhood}
        \begin{gather*}
            N_{\delta}(\Sigma,\Phi) := 
            \{ (x,v) \in \Sigma \times \R^n :  \Phi(x)v = v,\, |v| < \delta \}
            \\
            \text{and the map}\quad
            \Psi_{\delta}[\Sigma,\Phi] : N_{\delta}(\Sigma,\Phi) \to \R^n \,,
            \qquad
            \Psi_{\delta}[\Sigma,\Phi](x,v) := x + v \,.
        \end{gather*}
    \end{defin}

    \begin{lem}
        [\textbf{tubular neighbourhoods for $\boldsymbol{C^{1,\alpha}}$ manifolds}]
        \label{lem:tubular}
        Assume $R,L,d >0$, $\alpha \in (0,1]$, and let $\Sigma \in \Cmn(R,L,d)$, and
        for
        $\varepsilon \in (0,1/100)$ let  $\Phi : \Sigma \to \Gr$ be an
        $\varepsilon$-normal map for $\Sigma$. Then there is a  
        constant 
        $\delta_{tub} = \delta_{tub}(R,L,\alpha,\varepsilon,\lip(\Phi)) > 0$ such
        that for all $\delta \in (0,\delta_{tub}]$
        \begin{enumerate}[label=(\roman*)]
        \item\label{i:Psi-embed}
            $\Psi = \Psi_{\delta}[\Sigma,\Phi]$ is a~$C^1$-embedding,
        \item\label{i:Psi-bilip}
            $(1/4) |(x-y,u-v)| \le |\Psi(x,u) - \Psi(y,v)| \le \sqrt{2}|(x-y,u-v)|$
            for all $(x,u), (y,v) \in N_{\delta}(\Sigma,\Phi)$,
        \item\label{i:Psi-dist}
            $\dist(\Psi(x,v), \Sigma) > \tfrac 14 |v|$
            for all $(x,v) \in N_{\delta}(\Sigma,\Phi)$, $v\not=0$,
        \item\label{i:Psi-onto}
            $\Sigma + \Ball{0}{\delta/2} \subset 
            \Psi_{\delta}[\Sigma,\Phi](N_{\delta}(\Sigma,\Phi))$.
        \end{enumerate}
    \end{lem}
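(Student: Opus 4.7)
I would prove~(ii) first, then deduce~(i) and~(iii) as structural consequences, and finally treat~(iv), which I expect to be the main obstacle, via Proposition~\ref{prop:deg-one}.

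The upper bound in~(ii) is immediate: $|\Psi(x,u) - \Psi(y,v)| = |(x-y) + (u-v)| \le |x-y| + |u-v| \le \sqrt{2}\,|(x-y,u-v)|$. For the lower bound I would split on the scale of $|x-y|$ relative to~$\delta$. In the \emph{far} case $|x-y| \ge 4\delta$ one has $|u-v| \le 2\delta \le \tfrac{1}{2}|x-y|$, so the triangle inequality alone gives $|\Psi(x,u) - \Psi(y,v)| \ge \tfrac 12 |x-y| \ge \tfrac 14 |(x-y,u-v)|$. In the \emph{near} case $|x-y| < 4\delta$, choosing $\delta_{tub} \le R/8$ places~$y$ inside the graph patch of~$x$, so I write $y-x = \xi + f_x(\xi)$ with $\xi\in T_x\Sigma$, $|\xi|\ge|x-y|/\sqrt 2$ and $|f_x(\xi)|\le L|\xi|^{1+\alpha}$. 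Decomposing $\Psi(x,u) - \Psi(y,v)$ into its $T_x\Sigma$- and $T_x\Sigma^\perp$-components, I use $u = \Phi(x)u$, $v = \Phi(y)v$, $\|\Phi(x)-(T_x\Sigma)^\perp_\natural\|\le\varepsilon$ and $\|\Phi(y)-\Phi(x)\|\le\lip(\Phi)|x-y|$ to bound $|(T_x\Sigma)_\natural u| \le \varepsilon\delta$, $|(T_x\Sigma)_\natural v| \le (\varepsilon + \lip(\Phi)|x-y|)\delta$, and analogously the defects $|(T_x\Sigma)^\perp_\natural u - u|$ and $|(T_x\Sigma)^\perp_\natural v - v|$. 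The tangential piece of $\Psi(x,u) - \Psi(y,v)$ is then bounded below by $|\xi| - O((\varepsilon + \lip(\Phi)\delta)\delta)$ and the normal piece by $|u-v| - L|x-y|^{1+\alpha} - O((\varepsilon + \lip(\Phi)\delta)\delta)$. A careful case split (treating separately the subregimes in which $|x-y|$ or $|u-v|$ dominates) then shows that for $\delta \le \delta_{tub}$, chosen small enough in terms of $R,L,\alpha,\varepsilon,\lip(\Phi)$, the bilipschitz estimate $|\Psi(x,u) - \Psi(y,v)| \ge \tfrac 14 |(x-y,u-v)|$ holds.

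Part~(i) follows cleanly from~(ii). The set $N_\delta(\Sigma,\Phi)$ is a $C^1$-submanifold of $\Sigma\times\R^n$ of dimension~$n$, being the total space of the rank-$(n-m)$ $C^1$ vector bundle $x\mapsto\im\Phi(x)$ over~$\Sigma$, and $\Psi$ is $C^1$ by construction. The lower bilipschitz estimate forces $\Psi$ to be injective and $D\Psi$ to be injective at each point; since source and target both have dimension~$n$, this makes $D\Psi$ an isomorphism everywhere and $\Psi$ a $C^1$-embedding. Part~(iii) is immediate from~(ii) applied with $u=0$ and $y$ ranging over~$\Sigma$: $|\Psi(x,v) - y| = |\Psi(x,v) - \Psi(y,0)| \ge \tfrac 14 \sqrt{|x-y|^2 + |v|^2} \ge \tfrac 14 |v|$. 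Strict inequality follows from compactness of~$\Sigma$: a minimiser $y_0$ exists, and either $y_0\ne x$ (so $\sqrt{|x-y_0|^2 + |v|^2} > |v|$) or $y_0 = x$ (so $|\Psi(x,v) - x| = |v| > \tfrac 14 |v|$ for $v\ne 0$).

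For~(iv), the main obstacle, I fix $z\in\Sigma + \Ball{0}{\delta/2}$ and let $x_0\in\Sigma$ realise $|z-x_0| = \dist(z,\Sigma) < \delta/2$. Working in the orthogonal decomposition $\R^n = T_{x_0}\Sigma\oplus T_{x_0}\Sigma^\perp$ translated to origin~$x_0$, I parametrise the graph patch of~$\Sigma$ near $x_0$ as $x(\xi) = x_0 + \xi + f_{x_0}(\xi)$ for $\xi\in T_{x_0}\Sigma$, and parametrise the fibre $\im\Phi(x(\xi))$ by $\eta\in T_{x_0}\Sigma^\perp$ as follows. The angle $\ang(\im\Phi(x(\xi)), T_{x_0}\Sigma^\perp) \le \varepsilon + \ang(T_{x(\xi)}\Sigma, T_{x_0}\Sigma) \le \varepsilon + L|\xi|^\alpha$ can be made strictly less than~$1$ by choosing $\delta_{tub}$ small, so Lemma~\ref{lem:angle-isomorphism} supplies a linear isomorphism $T_{x_0}\Sigma^\perp \to \im\Phi(x(\xi))$ of norm at most $(1 - (\varepsilon + L|\xi|^\alpha)^2)^{-1/2}$; call its image of $\eta$ simply $v(\xi,\eta)$. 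Define $G(\xi,\eta) := \Psi(x(\xi), v(\xi,\eta)) - x_0 = \xi + f_{x_0}(\xi) + v(\xi,\eta)$, a continuous map on a closed ball $\CKugel(0,\rho)\subset T_{x_0}\Sigma\oplus T_{x_0}\Sigma^\perp = \R^n$. A direct computation gives $|G(\xi,\eta) - (\xi+\eta)| \le L|\xi|^{1+\alpha} + C(\varepsilon + L|\xi|^\alpha)|\eta|$. For $\rho$ slightly larger than $\delta/2$ and $\delta_{tub}$ small enough, this deviation is bounded by $\sigma\rho$ for a fixed $\sigma\in(0,1/2)$, so Proposition~\ref{prop:deg-one} produces $(\xi_*,\eta_*)\in\CKugel(0,\rho)$ with $G(\xi_*,\eta_*) = z - x_0$; the same constants force $|v(\xi_*,\eta_*)| < \delta$, yielding the desired preimage $(x(\xi_*), v(\xi_*,\eta_*))\in N_\delta(\Sigma,\Phi)$ with $\Psi$-image~$z$.
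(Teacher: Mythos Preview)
Your proposal is correct and follows essentially the same route as the paper for~(i), (ii) and~(iv): the same far/near split in~(ii), the same reduction of~(i) to the bilipschitz estimate, and the same degree argument via Proposition~\ref{prop:deg-one} for~(iv). Two small variations are worth noting. In the near case of~(ii) you decompose along $T_x\Sigma\oplus T_x\Sigma^\perp$, whereas the paper decomposes along $U^\perp\oplus U$ with $U=\im\Phi(x)$; the latter is marginally cleaner because $u\in U$ exactly, so $U^\perp_\natural u=0$ on the nose and the bookkeeping simplifies. In~(iv) you parametrise the fibre by inverting the projection $T_{x_0}\Sigma^\perp\leftarrow\im\Phi(x(\xi))$ via Lemma~\ref{lem:angle-isomorphism}, whereas the paper applies $\Phi(\psi(z))$ directly to $(T_x\Sigma)^\perp_\natural z$; both give a continuous map close to the identity and both feed into Proposition~\ref{prop:deg-one} in the same way.

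The one genuine difference is your argument for~(iii), which is more elegant than the paper's. The paper proves~(iii) by a separate geometric argument (showing that a ball $\Ball{x+u}{|v|/2}$ with $u=(T_x\Sigma)^\perp_\natural v$ misses $\Sigma$), while you simply observe that $y=\Psi(y,0)$ and apply the already-established lower bilipschitz bound of~(ii) with $(y,0)$ in place of $(y,v)$. This is both shorter and conceptually cleaner, and your treatment of the strict inequality via a compactness minimiser is correct.
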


    \begin{proof}
        For any $\delta > 0$ the mapping $\Psi=\Psi_{\delta}[\Sigma,\Phi]$ is
        the  restriction of the smooth function
        $$
        \R^n\times\R^n\ni (x,v) \mapsto x+v \in\R^n
        $$ 
        to the $C^1$-submanifold
        
        $$
        N:=N_{\delta}(\Sigma,\Phi))=\bigcup_{x\in\Sigma}
        \big[\{x\}\times \big(\textnormal{ker\,}(\Phi(x)-\Id)\cap\Ball{0}{\delta}
        \big)\big];
        $$ 

        \noindent
        hence $\Psi$ is of  class $C^1$. 
        To show that $\Psi$ is an embedding it suffices to prove that
        it is bilipschitz, i.e., that \ref{i:Psi-bilip} holds, for sufficiently
        small $\delta.$
        For any $(x,u),(y,v)\in N$ one has
        \begin{gather}
            \label{eq:Psi-upper}
            |\Psi(x,u) - \Psi(y,v)| 
            = |(x-y) + (u-v)| \le\sqrt{2}|(x-y,u-v)|\,,
        \end{gather}
        and therefore it is enough to prove the estimate from
        below in \ref{i:Psi-bilip}.
        Set
        \begin{gather}
            \label{eq:delta-tb}
            \delta_{tub}:=
            \min\left\{\frac R4 ,
                \frac 14  \Big(\frac{\varepsilon}{L}\Big)^{1/\alpha},
                \frac{\varepsilon}{4\lip(\Phi)},1
            \right\} \,.
        \end{gather}
        Assume $0 < \delta \le \delta_{tub}$. 
        For $(x,u),(y,v)\in N$ define the subspaces $U:=\im\Phi(x)$ and $V:=
        \im\Phi(y)$ and observe that if $|x-y|\ge 4\delta$ 
        then, on the one hand, 
        $$
        |\Psi(x,u) - \Psi(y,v)|  \ge |x-y| - |u| - |v| \ge |x-y|-2\delta\ge
        |x-y|/2,
        $$
        and, on the other hand, 
        $$
        |(x-y,u-v)|\le |x-y|+|u-v|\le |x-y|+2\delta\le \tfrac 32 |x-y|,
        $$
        so that
        \begin{gather}
            \label{eq:xy-far-away}
            |\Psi(x,u) - \Psi(y,v)|\ge \frac 13 |(x-y,u-v)|\,.
        \end{gather}
        Thus we have to treat the case $|x-y| < 4 \delta$. Since $\Sigma 
        \in \Cmn(R,L,d)$
        and $|x-y| < 4 \delta \le R$, we can use the local graph representation
        $$
        \Sigma\cap\Ball{x}{R}=(x+\graph(f_x))\cap\Ball{x}{R}
        $$
        for a function $f:=f_x\in C^{1,\alpha}(T_x\Sigma,T_x\Sigma^\perp)$
        satisfying $f(0)=0$, $Df(0)=0$, $\lip(f)\le 1,$ and the H\"older estimate
        for $Df$ as in Definition \ref{class-definition},
        to find for $y=x+\eta+f(\eta)\in x+\graph(f)$, $\eta\in T_x\Sigma$,
        by means of 
        \eqref{improved-f_x-estimate} 
        $$
        \dist(y,x+T_x\Sigma)=|f(\eta)|\overset{\eqref{improved-f_x-estimate}}{\le}
        L|\eta|^{1+\alpha}\le L|x-y|^{1+\alpha}\le L(4\delta)^\alpha|x-y|,
        $$
        so that we obtain by our choice of $\delta_{tub}$ in \eqref{eq:delta-tb}
        \begin{gather}\label{19}
            |(T_x\Sigma^{\perp})_{\natural}(y-x)| =  \dist(y, x + T_x\Sigma) 
            \overset{\eqref{eq:delta-tb}}{\le} \varepsilon |x-y| \,.
        \end{gather}
        Using this estimate together with the fact that $\Phi$ is 
        an~$\varepsilon$-normal
        map for $\Sigma$  we can write
        \begin{eqnarray}
            |U^{\perp}_{\natural}(x-y)| 
            & \ge &  |(T_x\Sigma)_{\natural}(x-y)| 
            - \|U^{\perp}_{\natural}-(T_x\Sigma)_\natural\||x-y|
            \notag\\
            & \ge & (1-\varepsilon) |x-y| -     
            \|\Phi(x)-(T_x\Sigma)^\perp_\natural\||x-y|
            \ge (1 - 2\varepsilon) |x-y| \,,
            \label{eq:U-perp-xy}
        \end{eqnarray}
        which implies by means of $|U_{\natural}(x-y)|^2 = |x-y|^2 -
        |U^{\perp}_{\natural}(x-y)|^2$  the inequality
        $
        |U_{\natural}(x-y)|^2 \le \big(1-(1-2\varepsilon)^2\big) |x-y|^2 \,;
        $
        hence, 
        \begin{equation}\label{eq:U-xy}
            |U_{\natural}(x-y)|\le 2\sqrt{\varepsilon}|x-y|.
        \end{equation}
        Recall our choice of $\delta_{tub}$ in \eqref{eq:delta-tb} to estimate for
        $u=\Phi(x)u\in U\cap\Ball{0}{\delta}$ and 
        $v=\Phi(y)v\in V\cap\Ball{0}{\delta}$
        \begin{gather}
            \label{eq:U-perp-uv}
            |U^{\perp}_{\natural}(u-v)| = |U^{\perp}_{\natural}v| 
            = |(U_{\natural} - V_{\natural}) v| 
            \le |v| \|\Phi(x) - \Phi(y)\|
            \le \delta \lip(\Phi) |x - y|
            \overset{\eqref{eq:delta-tb}}{\le} \varepsilon |x-y| \,,
        \end{gather}
        so that
        \begin{gather}
            \label{eq:U-uv}
            |U_{\natural}(u-v)| =|(\Id-U^\perp_\natural)(u-v)|
            \ge  |u-v| - \varepsilon |x-y| \,.
        \end{gather}
        Combining \eqref{eq:U-perp-xy}, \eqref{eq:U-xy}, \eqref{eq:U-perp-uv},
        \eqref{eq:U-uv} with  the  triangle inequality, we
        arrive at
        \begin{eqnarray}
            |\Psi(x,u) - \Psi(y,v)|  
            & = & |(U^\perp_\natural+U_\natural)
            \big((x-y)+(u-v)\big)|\notag\\
            & \ge & 
            |U^{\perp}_{\natural}(x-y) + U_{\natural}(u-v)| 
            - |U_{\natural}(x-y)| - |U^{\perp}_{\natural}(u-v)| \notag\\
            & \overset{\eqref{eq:U-xy},\eqref{eq:U-perp-uv}}{\ge} &
            \frac{1}{\sqrt{2}} 
            \Big(|U^{\perp}_{\natural}(x-y)| + |U_{\natural}(u-v)|\Big) 
            - (2 \sqrt{\varepsilon}+\varepsilon) |x-y| \notag\\
            & \overset{\eqref{eq:U-uv},\eqref{eq:U-perp-xy}}{\ge}& 
            \Big(\frac{1}{\sqrt{2}}(1-3\varepsilon)
            -2\sqrt{\varepsilon}-\varepsilon\Big)|x-y|+\frac{1}{\sqrt{2}}|u-v|
            \notag \\
            & \ge & \frac 14 |(x-y,u-v)|\,, \label{eq:Psi-lower}
        \end{eqnarray}
        since $\varepsilon\in (0,1/100)$.  
        So, part \ref{i:Psi-bilip}  
        of Lemma~\ref{lem:tubular} follows from \eqref{eq:Psi-upper}, 
        \eqref{eq:xy-far-away}, and \eqref{eq:Psi-lower}, which -- as observed
        above -- implies part \ref{i:Psi-embed} as well.
        
        We now turn to the proof of part \ref{i:Psi-dist}. For 
        $(x,v) \in
        N_{\delta}(\Sigma,\Phi)$ with $v\not= 0$, denote 
        $u:= (T_x\Sigma)^{\perp}_{\natural}v$
        and note that by definition of $N$ and the fact that $\Phi$ is an
        $\varepsilon$-normal map for $\Sigma$,
        \begin{equation}\label{24a}
            |u-v|=\big|\big(T_x\Sigma)^\perp_\natural -\Phi(x)\big)v\big|\le
            \|\big(T_x\Sigma)^\perp_\natural-\Phi(x)\||v|<\varepsilon|v|.
        \end{equation}
        Since $\Sigma
        \in \Cmn(R,L,d)$, we find for any  $y \in \Sigma$ with 
        $|y - x|<4\delta_{tub}\overset{\eqref{eq:delta-tb}}{\le}R$ 
        as in \eqref{19}
        \begin{gather}
            \label{eq:sigma-x-To}
            \dist(y, x + T_x\Sigma) \le L|y-x|^{1+\alpha} \le \varepsilon |y-x| \,.
        \end{gather}
        On the other hand, if $y\in\Ball{x+u}{\tfrac 12 |v|}$, then
        \begin{gather}
            \label{eq:x-o}
            |y-x| \le |y-(x+u)| + |u| \le \tfrac 12 |v|+\big|
            \big(T_x\Sigma\big)^\perp_\natural v\big|\le \tfrac 32 |v|,
        \end{gather}
        and by \eqref{24a}
        \begin{multline}
            \label{eq:x-To}
            \dist(y,x+T_x\Sigma) 
            = \big|\big(T_x\Sigma\big)^{\perp}_{\natural}(y-x)\big|
            \ge \big|\big(T_x\Sigma\big)^{\perp}_{\natural}u\big| - \big|
            \big(T_x\Sigma\big)^{\perp}_{\natural}(y-(x+u))| 
            \\
            =|u|-\big|\big(T_x\Sigma\big)^\perp_\natural (y-(x+u))\big|
            \overset{\eqref{24a}}{\ge} |v| - |u-v| - |y - (x+u)|
            \ge \big(\tfrac 12 - \varepsilon\big) |v| \,.
        \end{multline}
        Combining
        \eqref{eq:sigma-x-To}, \eqref{eq:x-o} and \eqref{eq:x-To} would yield for
        $y\in\Sigma\cap\Ball{x+u}{\tfrac 12 |v|}\cap \Ball{x}{4\delta_{tub}}$
        \begin{gather*}
            \big( \tfrac 12 - \varepsilon \big) |v| \overset{\eqref{eq:x-To}}{\le}
            |T_x\Sigma^{\perp}_{\natural}(y-x)|=\dist(y,x+T_x\Sigma)
            \overset{\eqref{eq:sigma-x-To}}{\le}\varepsilon|y-x|
            \overset{\eqref{eq:x-o}}{\le} \tfrac 32 \varepsilon |v| 
        \end{gather*}
        contradicting $\varepsilon\in (0,1/100)$ because $|v|\not=0.$
        Since
        $\Ball{x+u}{ \tfrac 12|v|} \subset \Ball{x}{4\delta_{tub}}$ because
        $|u|\le |v|<\delta<\delta_{tub}$, this can only mean that 
        $\Ball{x+u}{ \tfrac 12|v|} \cap \Sigma = \emptyset$, which implies by
        \eqref{24a} 
        \begin{gather*}
            \dist(\Psi(x,v), \Sigma) 
            \ge \dist(x+u, \Sigma) - |u-v| 
            \ge \tfrac 12 |v| - |u-v|
            \overset{\eqref{24a}}{\ge} \big( \tfrac 12 - \varepsilon \big) |v|
            > \tfrac 14|v| \,.
        \end{gather*}
        Finally we prove part  \ref{i:Psi-onto}. For $x \in \Sigma\in\Cmn(R,L,d)$
        there exists a function 
        $f=f_x\in C^{1,\alpha}(T_x\Sigma,T_x\Sigma^\perp)$ with
        $f(0)=0,$ $Df(0)=0$,  $\lip(f)\le 1,$ and the H\"older condition for 
        $Df$ in Definition \ref{class-definition},
        such that by \eqref{eq:delta-tb}
        $$
        \Ball{x}{4\delta_{tub}}\cap\Sigma=(x+\graph(f))\cap
        \Ball{x}{4\delta_{tub}}.
        $$
        For any $\xi\in T_x\Sigma\cap\Ball{0}{4\delta_{tub}}$ one can use 
        \eqref{improved-f_x-estimate} and \eqref{eq:delta-tb} to estimate
        \begin{gather}
            \label{eq:f-small}
            |f(\xi)|\overset{\eqref{improved-f_x-estimate}}{\le} 
            L|\xi|^{1+\alpha} \le L (4\delta_{tub})^{\alpha} |\xi| 
            \le \varepsilon |\xi| \,.
        \end{gather}
        Again by~\eqref{eq:delta-tb} in combination with 
        Definition~\ref{def:e-normal} we have for $\zeta \in
        \Ball{x}{4\delta_{tub}} \cap \Sigma$ 
        \begin{gather}
            \label{eq:Phi-Tan}
            \|\Phi(\zeta) - \big(T_x\Sigma\big)_{\natural}^{\perp}\| 
            \le \|\Phi(\zeta) - \Phi(x)\| + \|\Phi(x) - 
            \big(T_x\Sigma\big)_{\natural}^{\perp}\|
            < \lip(\Phi) 4\delta_{tub} + \varepsilon \le 2\varepsilon \,.
        \end{gather}
        For fixed $\delta\in (0,\delta_{tub}]$ consider
        the $C^1$-functions
        \begin{gather*}
            \psi : \CBall{0}{\delta} \to \Sigma
            \quad \text{given by} \quad
            \psi(w) := x+(T_x\Sigma)_\natural w
            +f\big((T_x\Sigma)_\natural w\big) \,,
            \\
            F : \CBall{0}{\delta}\to N_\delta(\Sigma,\Phi)
            \quad \text{defined by} \quad
            F(z) := 
            \big(\psi(z),\Phi(\psi(z))\big(T_x\Sigma\big)^\perp_\natural z\big)
            \,, 
            \\
            \text{and} \quad
            G = \big(\Psi_\delta[\Sigma,\Phi]\circ F\big)-x:
            \CBall{0}{\delta}\to\R^n
            \,.
        \end{gather*}
        Employing \eqref{eq:f-small} and \eqref{eq:Phi-Tan} we obtain for $z \in
        \Ball{0}{\delta}$
        \begin{eqnarray*}
            |G(z) - z| &=&|\psi(z)+\Phi(\psi(z))(T_x\Sigma)^\perp_\natural z-x-
            (T_x\Sigma)_\natural z-(T_x\Sigma)^\perp_\natural z|\\
            &   \le & |\psi(z)-(T_x\Sigma)_\natural z-x|
            +| \Phi(\psi(z))(T_x\Sigma)^\perp_\natural z 
            - (T_x\Sigma)^\perp_\natural z|\\
            & \le & |f\big((T_x\Sigma)_\natural z\big)|+
            \|\Phi(\psi(z))-(T_x\Sigma)^\perp_\natural\|
            |(T_x\Sigma)^\perp_\natural z|\\
            &\overset{\eqref{eq:f-small},\eqref{eq:Phi-Tan}}{\le} &
            \varepsilon |(T_x\Sigma)_\natural z|
            +2\varepsilon |(T_x\Sigma)^\perp_\natural z|
            \le 2\sqrt{2}\varepsilon |z|<2\sqrt{2}\varepsilon\delta,
        \end{eqnarray*}
        so that  we can apply
        Proposition~\ref{prop:deg-one} to get
        \begin{gather*}
            \CBall{0}{\delta/2}
            \subset \Ball{0}{(1 - 2 \sqrt 2 \varepsilon} \delta)
            \subset G(\CBall{0}{\delta})
            = \Psi_{\delta}[\Sigma,\Phi](F(\CBall{0}{\delta})) -x;
        \end{gather*}
        hence 
        \begin{gather*}
        x+\Ball{0}{\delta/2}\subset
        \Psi_\delta[\Sigma,\Phi](F(\CBall{0}{\delta})) 
        \subset \Psi_\delta[\Sigma,\Phi](N_\delta(\Sigma,\Phi)).
        \end{gather*}
    \end{proof}

    \begin{prop}
        \label{prop:tales2}
        Let $\theta \in [0,1]$, $\lambda,\gamma \in [0,1)$ and $k \in \{ 1, \ldots,
        n-1\}$ and suppose that  $W,T \in G(n,k)$ and $U,V \in G(n,n-k)$ satisfy
        $\ang(W,T) \le \theta$, $\ang(T,U^{\perp}) \le \lambda$,
        $\ang(T,V^{\perp}) \le \lambda$, and
        $\ang(U,V) \le \gamma$. Given any vectors $\mathbf{w} \in W$, $\mathbf{t}
        \in T$, $\mathbf{u} \in U$ and $\mathbf{v} \in V$ such that
        $\mathbf{u}+\mathbf{t} = \mathbf{w}+\mathbf{v}$ the following holds:
        \begin{gather}
            \left( |\mathbf{u}| - \tfrac{\theta}{1 - \lambda}|\mathbf{w}| \right)
            \left(1 - \tfrac{\gamma}{1 - \lambda} \right) \le
            |\mathbf{v}| \le
            \left( |\mathbf{u}| + \tfrac{\theta}{1 - \lambda}|\mathbf{w}| \right)
            \left(1 + \tfrac{\gamma}{1 - \lambda} \right)
        \end{gather}
        \begin{gather}
            \label{eq2-tales2}
            \text{and} \quad
            |\mathbf{u}-\mathbf{v}| 
            \le \tfrac{\gamma}{1-\lambda} \left(
                |\mathbf{u}| + \tfrac{\theta}{1-\lambda}|\mathbf{w}|
            \right)
            + \tfrac{\theta}{1-\lambda}|\mathbf{w}| \,.
        \end{gather}
    \end{prop}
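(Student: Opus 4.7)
The plan is to split $\mathbf{w}$ along $T$ and $V$ — this is possible because $\ang(T,V^{\perp}) \le \lambda < 1$ forces $T \cap V = \{0\}$ and hence $T \oplus V = \R^n$ — writing $\mathbf{w} = \mathbf{t}_2 + \mathbf{v}_1$ with $\mathbf{t}_2 \in T$ and $\mathbf{v}_1 \in V$. Proposition~\ref{prop:tales1} applied with $X = W$, $Y = T$, $Z = V$, $\mathbf{x} = \mathbf{w}$, $\mathbf{z} = \mathbf{v}_1$ (legitimate since $\mathbf{v}_1 - \mathbf{w} = -\mathbf{t}_2 \in T$) immediately gives $|\mathbf{v}_1| \le \tfrac{\theta}{1-\lambda}|\mathbf{w}|$. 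Substituting $\mathbf{w} = \mathbf{t}_2 + \mathbf{v}_1$ into the hypothesis $\mathbf{u} + \mathbf{t} = \mathbf{w} + \mathbf{v}$, I arrive at the key reformulation $\mathbf{u} = \tilde{\mathbf{v}} + \tau$, where $\tilde{\mathbf{v}} := \mathbf{v} + \mathbf{v}_1 \in V$ and $\tau := \mathbf{t}_2 - \mathbf{t} \in T$.

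The next step is to bound $|\tau|$ using the angle $\gamma$. Projecting $\mathbf{u} = \tilde{\mathbf{v}} + \tau$ onto $V^{\perp}$ annihilates $\tilde{\mathbf{v}}$, so $V^{\perp}_{\natural}\tau = V^{\perp}_{\natural}\mathbf{u}$. On one side, $|V^{\perp}_{\natural}\mathbf{u}| = |(V^{\perp}_{\natural} - U^{\perp}_{\natural})\mathbf{u}| \le \gamma|\mathbf{u}|$ because $U^{\perp}_{\natural}\mathbf{u} = 0$. On the other side, $|V_{\natural}\tau| = |(V_{\natural} - T^{\perp}_{\natural})\tau| \le \lambda|\tau|$ since $T^{\perp}_{\natural}\tau = 0$, whence $|\tau| \le |V_{\natural}\tau| + |V^{\perp}_{\natural}\tau| \le \lambda|\tau| + |V^{\perp}_{\natural}\tau|$, yielding $(1-\lambda)|\tau| \le |V^{\perp}_{\natural}\tau| \le \gamma|\mathbf{u}|$. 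Hence $|\tau| \le \tfrac{\gamma}{1-\lambda}|\mathbf{u}|$. Since $\mathbf{u} - \mathbf{v} = \tau + \mathbf{v}_1$, the triangle inequality gives $|\mathbf{u} - \mathbf{v}| \le \tfrac{\gamma}{1-\lambda}|\mathbf{u}| + \tfrac{\theta}{1-\lambda}|\mathbf{w}|$, which already implies \eqref{eq2-tales2}; combining with $|\mathbf{v}| \le |\mathbf{u}| + |\mathbf{u} - \mathbf{v}|$ produces $|\mathbf{v}| \le |\mathbf{u}|(1+\tfrac{\gamma}{1-\lambda}) + \tfrac{\theta}{1-\lambda}|\mathbf{w}|$, and since $1 + \tfrac{\gamma}{1-\lambda} \ge 1$ this is stronger than the upper bound stated in the proposition.

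For the lower bound the naive triangle inequality $|\mathbf{v}| \ge |\mathbf{u}| - |\mathbf{u} - \mathbf{v}|$ only produces $|\mathbf{v}| \ge |\mathbf{u}|(1 - \tfrac{\gamma}{1-\lambda}) - \tfrac{\theta}{1-\lambda}|\mathbf{w}|$, missing the extra factor $(1 - \tfrac{\gamma}{1-\lambda})$ that ought to multiply $\tfrac{\theta}{1-\lambda}|\mathbf{w}|$. To recover it I repeat the previous argument with the roles of $(\mathbf{u},U)$ and $(\mathbf{v},V)$ exchanged — the hypotheses are symmetric in these two pairs — decomposing $\mathbf{w} = \mathbf{t}_1 + \mathbf{u}_1$ with $\mathbf{u}_1 \in U$ and obtaining the companion estimate $|\mathbf{u} - \mathbf{v}| \le \tfrac{\gamma}{1-\lambda}|\mathbf{v}| + \tfrac{\theta}{1-\lambda}|\mathbf{w}|$. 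Inserted into $|\mathbf{u}| \le |\mathbf{v}| + |\mathbf{u} - \mathbf{v}|$ this rearranges to $|\mathbf{v}|(1 + \tfrac{\gamma}{1-\lambda}) \ge |\mathbf{u}| - \tfrac{\theta}{1-\lambda}|\mathbf{w}|$; dividing by $1 + \tfrac{\gamma}{1-\lambda}$ and using the elementary inequality $\tfrac{1}{1+x} \ge 1 - x$ valid for $x \ge 0$ turns this into the required multiplicative lower bound. The main subtle point is precisely the need to use the symmetric estimate bounding $|\mathbf{u}-\mathbf{v}|$ in terms of $|\mathbf{v}|$ rather than $|\mathbf{u}|$, since only in this way does the product structure $(|\mathbf{u}|-\tfrac{\theta}{1-\lambda}|\mathbf{w}|)(1-\tfrac{\gamma}{1-\lambda})$ actually emerge.
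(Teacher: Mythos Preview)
Your proof is correct and close in spirit to the paper's, but the bookkeeping is organized differently. The paper introduces a single intermediate point $\bar{\mathbf{u}} := P(\mathbf{u}-\mathbf{w}) \in U$, where $P$ is the oblique projection onto $U$ along $T$, and applies Proposition~\ref{prop:tales1} twice to obtain $|\bar{\mathbf{u}} - \mathbf{u}| \le \tfrac{\theta}{1-\lambda}|\mathbf{w}|$ and $|\bar{\mathbf{u}} - \mathbf{v}| \le \tfrac{\gamma}{1-\lambda}|\bar{\mathbf{u}}|$; both the upper and the lower bound on $|\mathbf{v}|$ then drop out simultaneously from the resulting two-sided sandwich on $|\bar{\mathbf{u}}|$. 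Your route instead decomposes $\mathbf{w}$ in $T\oplus V$, and after one application of Proposition~\ref{prop:tales1} plus a direct projection estimate (which is essentially that proposition inlined a second time) you get the upper bound and \eqref{eq2-tales2} --- in fact you obtain the slightly sharper inequality $|\mathbf{u}-\mathbf{v}| \le \tfrac{\gamma}{1-\lambda}|\mathbf{u}| + \tfrac{\theta}{1-\lambda}|\mathbf{w}|$ without the cross term $\tfrac{\gamma\theta}{(1-\lambda)^2}|\mathbf{w}|$. The price is that you must rerun the whole argument with $(U,\mathbf{u})$ and $(V,\mathbf{v})$ swapped to reach the lower bound, whereas the paper's single intermediate $\bar{\mathbf{u}}$ handles both sides at once.
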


    \begin{figure}[!ht]
        \begin{center}
            \includegraphics*[totalheight=7.5cm]{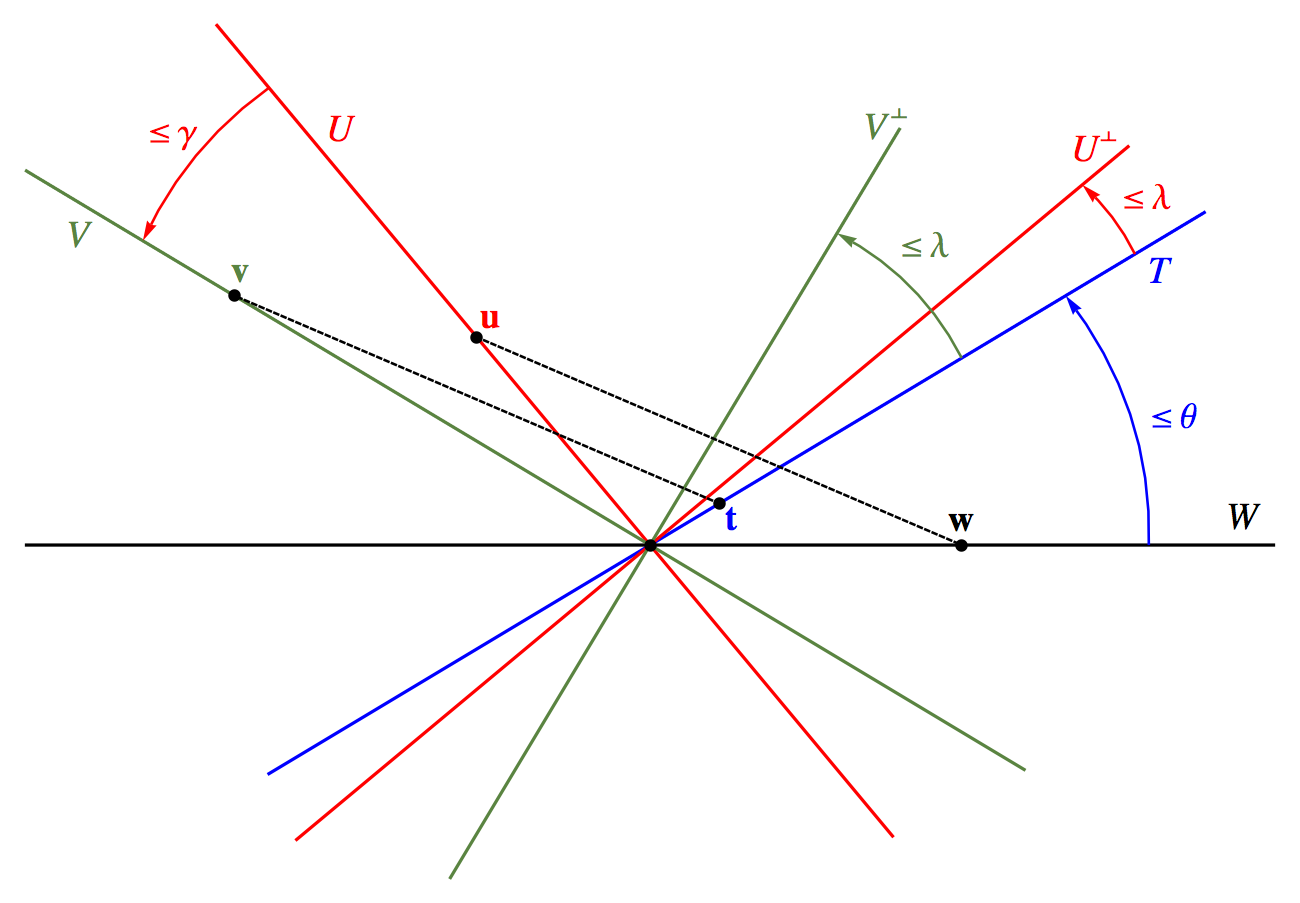}
        \end{center}
        \caption{\label{fig:prop4.6} The situation in Proposition~\ref{prop:tales2}: the
          vectors $\mathbf{u}-\mathbf{w}$ and $\mathbf{v}-\mathbf{t}$ are equal.}
    \end{figure}

    \begin{proof}
        Let $P : \R^n \to U$ be the oblique projection onto $U$
        with $\ker P = T$ and set
        $\bar{\mathbf{u}} = P(\mathbf{u}-\mathbf{w})\in U,$  so that
        $$
        \mathbf{w} + (\bar{\mathbf{u}} - \mathbf{u})
        = \mathbf{w} + (P(\mathbf{u}-\mathbf{w})-\mathbf{u})=
        \mathbf{w} - P\mathbf{w}+P\mathbf{u}-\mathbf{u}
        =\mathbf{w}- P\mathbf{w} \in \ker P  = T.
        $$
        Thus 
        we can apply
        Proposition~\ref{prop:tales1} to $\mathbf{z}:=\mathbf{u}-\bar{\mathbf{u}}
        \in U=:Z$  and $\mathbf{x}:=\mathbf{w}\in W=:X$ (with $Y:=T$ 
        implying $\ang(X,Y)\le\theta,$ $\ang(Y,Z^\perp)\le\lambda$, and
        $\mathbf{z}-\mathbf{x}\in Y$) to obtain $|\bar{\mathbf{u}} - \mathbf{u}| \le 
        \theta |\mathbf{w}|/(1-\lambda)$ which directly leads to
        \begin{gather}\label{intermed}
            |\mathbf{u}| - \frac{\theta}{1 - \lambda}|\mathbf{w}| 
            \le |\mathbf{u}|-|\bar{\mathbf{u}}-\mathbf{u}|
            \le |\bar{\mathbf{u}}| \le|\mathbf{u}|+|\bar{\mathbf{u}}-\mathbf{u}|
            \le |\mathbf{u}| + \frac{\theta}{1 - \lambda}|\mathbf{w}| \,.
        \end{gather}
        Applying Proposition~\ref{prop:tales1} now to $\mathbf{x}
        :=\bar{\mathbf{u}}\in U=:X$ and to
        $$
        \mathbf{z}:=\bar{\mathbf{u}}-\mathbf{v}=P(\mathbf{u}-\mathbf{w})-\mathbf{v}=
        \mathbf{u}-\mathbf{v}-P\mathbf{w}=\mathbf{w}-\mathbf{t}-P\mathbf{w}\in T=:Z
        $$
        (so that $\mathbf{z}-\mathbf{x}=-\mathbf{v}\in V=:Y,$ and hence $
        \ang(X,Y)=\ang(U,V)\le\gamma$ and  $\ang(Y,Z^\perp)=
        \ang(V,T^\perp)=\ang(V^\perp,T)\le\lambda$) to arrive at
        $|\bar{\mathbf{u}} - \mathbf{v}| 
        \le \gamma |\bar{\mathbf{u}}|/(1 - \lambda)$,
        and in consequence
        \begin{gather*}
            |\bar{\mathbf{u}}| \big(1 - \tfrac{\gamma}{1 - \lambda}\big) \le
            |\bar{\mathbf{u}}|-|\bar{\mathbf{u}}-\mathbf{v}|
            \le |\mathbf{v}| \le |\bar{\mathbf{u}}|+|\mathbf{v}-\bar{\mathbf{u}}|
            \le |\bar{\mathbf{u}}| \big(1 + \tfrac{\gamma}{1 - \lambda}\big) \,.
        \end{gather*}
        This together with \eqref{intermed} 
        gives the first part of the proposition. To get the second we
        use \eqref{intermed} to write
        \begin{gather*}
            |\mathbf{u} - \mathbf{v}| 
            \le  |\bar{\mathbf{u}} - \mathbf{v}| 
            + |\bar{\mathbf{u}} - \mathbf{u}|
            \le \tfrac{\theta}{1-\lambda}|\mathbf{w}| 
            + \tfrac{\gamma}{1-\lambda}|\bar{\mathbf{u}}|
            \overset{\eqref{intermed}}{\le}  
            \tfrac{\theta}{1-\lambda}|\mathbf{w}|
            + \tfrac{\gamma}{1-\lambda}\big(|\mathbf{u}| 
            + \tfrac{\theta}{1 - \lambda}|\mathbf{w}|
            \big)\,.
            \qedhere
        \end{gather*}
    \end{proof}

    \begin{defin}
        \label{def:mt}
        For $t\in\R$ we define the continuous map
        \begin{gather*}
            m_t : \R^n \times \R^n \to \R^n \times \R^n \,,
            \quad
            m_t(x,v) := (x,tv) \,.
        \end{gather*}
    \end{defin}

    \begin{lem}[\textbf{bilipschitz diffeomorphisms}]
        \label{lem:bilip-est}
        For $R,L,d \in (0,\infty)$, $\alpha \in (0,1]$, $ \varepsilon \in
        (0,10^{-2})$ let $\Sigma_1,\Sigma_2 \in \Cmn(R,L,d)$ with
        $\rho:=\HD(\Sigma_1, \Sigma_2)<\delta_{tub}/8$, and with
        $\varepsilon$-normal map  $\Phi_1:\Sigma_1\to\Gr$ for $\Sigma_1$,
        where $\delta_{tub}=\delta_{tub}(R,L,\alpha,\eps,\lip(\Phi_1))$
        is the radius of the tubular neighbourhood of $\Sigma_1$ established
        in Lemma \ref{lem:tubular}.
        Set $\Psi := \Psi_{\delta_{tub}}[\Sigma_1,\Phi_1]$ and define
        \begin{gather*}
            F : \Sigma_2 \to \R^n \quad\textnormal{by}
            \quad
            F := \Psi \circ m_0 \circ \Psi^{-1}|_{\Sigma_2}\,,
            \\
            G : \Sigma_2 \to \R^n \quad\textnormal{by}
            \quad
            G := F - \Id \,.
        \end{gather*}
        Then $\Sigma_1\subset\im(F)$, and
        there exist $C_l = C_l(L, \/ \lip(\Phi_1)) \ge 1$ 
        and $\rho_G =
        \rho_G(R,L,\alpha,\eps,\lip(\Phi_1))$ $ \in (0,\delta_{tub}/8]$ 
        such that for all 
        $ \rho=\HD(\Sigma_1, \Sigma_2)\in(0,\rho_G)$
        \begin{enumerate}[label=(\roman*)]
        \item \label{i:be:G-lip}
            $\lip(G) \le C_l \rho^{\alpha/2}$,
        \item \label{i:be:G-small}
            $|G(x)| \le 4 \dist(x,\Sigma_1)$ for all  $x \in \Sigma_2$,
        \item \label{i:be:F-diffeo}
            $F$ is a~bilipschitz diffeomorphism onto its image $\Sigma_1$ satisfying
            \begin{gather*}
                \big(1 - C_{l}\rho^{\alpha/2}\big)|x-y| \le |F(x) - F(y)| \le \big(1 
                + C_{l}\rho^{\alpha/2}\big)|x-y|\quad\forall x,y \in \Sigma_2 \,.
            \end{gather*}
        \end{enumerate}
    \end{lem}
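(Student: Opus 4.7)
The plan is to handle the three conclusions in the order (ii), (i), (iii), after first verifying that $F$ is well-defined and surjects onto $\Sigma_1$.

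First, I would check the basic setup. Since $\rho=\HD(\Sigma_1,\Sigma_2)<\delta_{tub}/8$, every $x\in\Sigma_2$ satisfies $\dist(x,\Sigma_1)<\delta_{tub}/2$, so Lemma~\ref{lem:tubular}\ref{i:Psi-onto} gives $\Sigma_2\subset\Psi(N_{\delta_{tub}}(\Sigma_1,\Phi_1))$, and Lemma~\ref{lem:tubular}\ref{i:Psi-embed} shows that $\Psi^{-1}$ is well-defined and $C^1$ on this image. Hence $F=\Psi\circ m_0\circ\Psi^{-1}|_{\Sigma_2}$ is $C^1$. If $\Psi^{-1}(x)=(x_0,u)$ with $x_0+u=x$ and $u\in\im\Phi_1(x_0)\cap\Ball{0}{\delta_{tub}}$, then $F(x)=x_0$ and $G(x)=-u$, which immediately gives~\ref{i:be:G-small} from Lemma~\ref{lem:tubular}\ref{i:Psi-dist}: $|G(x)|=|u|<4\dist(\Psi(x_0,u),\Sigma_1)=4\dist(x,\Sigma_1)$. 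To see $\Sigma_1\subset\im(F)$ I would fix $x_0\in\Sigma_1$, consider the normal disk $D_{x_0}:=\{x_0+w:w\in\im\Phi_1(x_0),\,|w|<\delta_{tub}/2\}$, and run a degree argument via Proposition~\ref{prop:deg-one}: compose the local graph parametrisation of $\Sigma_2$ at a nearest-point $y\in\Sigma_2$ to $x_0$ with the oblique projection onto $\im\Phi_1(x_0)$ along $T_{y}\Sigma_2$, whose deviation from the identity is controlled by $\varepsilon+CL\rho^{\alpha/2}$ via Lemma~\ref{lem:hd-angle}, and conclude that this map is surjective onto a disk of radius $\approx\delta_{tub}/2$.

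For the main estimate \ref{i:be:G-lip}, let $x,y\in\Sigma_2$, $\Psi^{-1}(x)=(x_0,u)$, $\Psi^{-1}(y)=(y_0,v)$, so that $G(x)-G(y)=v-u$ and the identity $x-y=(x_0-y_0)+(u-v)$ holds. I would split into two regimes at the threshold $|x-y|\asymp\rho^{1-\alpha/2}$. For $|x-y|\ge\rho^{1-\alpha/2}$, the trivial bound $|u-v|\le|u|+|v|\le 8\rho\le 8\rho^{\alpha/2}|x-y|$ from (ii) already gives the claim. For $|x-y|<\rho^{1-\alpha/2}$, Lemma~\ref{lem:tubular}\ref{i:Psi-bilip} forces $|x_0-y_0|\le 4|x-y|<4\rho^{1-\alpha/2}\le R$, so the graph representations of $\Sigma_1\in\Cmn(R,L,d)$ at $x_0$ and $y_0$ apply. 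The H\"older continuity of the tangent-plane map within $\Sigma_1$ (the same computation used in the derivation of \eqref{eq:Phi0-holder} in Lemma~\ref{lem:mollify}) then gives $\ang(T_{x_0}\Sigma_1,T_{y_0}\Sigma_1)\le C(m)L|x_0-y_0|^\alpha\le C'L\rho^{\alpha(1-\alpha/2)}\le C''L\rho^{\alpha/2}$ since $\alpha\le 1$. Combined with the $\varepsilon$-normality of $\Phi_1$, this controls $\ang(U,V)$ with $U=\im\Phi_1(x_0)$, $V=\im\Phi_1(y_0)$: using in addition Lipschitzness of $\Phi_1$ one gets $\ang(U,V)=\|\Phi_1(x_0)-\Phi_1(y_0)\|\le\lip(\Phi_1)\cdot 4|x-y|$, and analogously $\ang(T_{x_0}\Sigma_1,U^\perp)\le\varepsilon$, $\ang(T_{y_0}\Sigma_1,V^\perp)\le\varepsilon+C''L\rho^{\alpha/2}$.

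With these four angle bounds in hand I would apply Proposition~\ref{prop:tales2} (with $k=m$, $W=T_{y_0}\Sigma_1$, $T=T_{x_0}\Sigma_1$, and $U,V$ as above). The vectors are chosen so as to decompose the common quantity $x-y$ in two ways: $\mathbf{u}+\mathbf{t}=\mathbf{w}+\mathbf{v}=x-y$, where $\mathbf{u}\in U$, $\mathbf{t}\in T$, $\mathbf{w}\in W$, $\mathbf{v}\in V$ are determined by the oblique projections onto $U$ along $T$ and onto $V$ along $W$ (these exist thanks to the angle bounds and Remark~\ref{rem:oblique-proj}). Using $u\in U$, $v\in V$ with $x-y-u\in T_{x_0}\Sigma_1$ up to an error of order $L|x_0-y_0|^{1+\alpha}$ from the graph of $\Sigma_2$ through $y$ (and the local graph of $\Sigma_1$ through $y_0$), I would identify $\mathbf{u}=u+O(L|x-y|^{1+\alpha})$, $\mathbf{v}=v+O(L|x-y|^{1+\alpha})$, so that the estimate \eqref{eq2-tales2} translates into $|u-v|\le C_l\rho^{\alpha/2}|x-y|$. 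The main obstacle here is the bookkeeping in this identification: one must match the $\rho^{\alpha/2}$ arising from the angle bounds against the trivial H\"older error $L|x_0-y_0|^\alpha\le CL|x-y|^\alpha\le CL\rho^{(1-\alpha/2)\alpha}$, and verify that both summands in the right-hand side of \eqref{eq2-tales2} are absorbed into $C_l(L,\lip(\Phi_1))\rho^{\alpha/2}|x-y|$. Once \ref{i:be:G-lip} is established, part \ref{i:be:F-diffeo} follows by writing $F=\Id+G$ and choosing $\rho_G$ small enough that $C_l\rho^{\alpha/2}<1$: the inequality in \ref{i:be:F-diffeo} is then immediate, while $F$ is a local $C^1$-diffeomorphism by the inverse function theorem applied to the $C^1$ map $F$ whose differential is $\Id+dG$ with $\|dG\|<1$, and $F(\Sigma_2)=\Sigma_1$ by the surjectivity established above.
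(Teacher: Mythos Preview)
Your outline for well-definedness, part~\ref{i:be:G-small}, and the deduction of~\ref{i:be:F-diffeo} from~\ref{i:be:G-lip} is fine and matches the paper. The surjectivity argument you sketch (via Proposition~\ref{prop:deg-one}) is different from the paper's linking argument, but the paper itself notes an alternative route, so this is not a concern.

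The gap is in~\ref{i:be:G-lip}. With your choice of subspaces $T=T_{x_0}\Sigma_1$, $W=T_{y_0}\Sigma_1$, $U=\im\Phi_1(x_0)$, $V=\im\Phi_1(y_0)$ and the decomposition of $x-y$, the identification $\mathbf{u}=u+O(L|x-y|^{1+\alpha})$ fails. Write $x-y=(x_0-y_0)+u-v$. The oblique projection onto $U$ along $T$ sends $u\mapsto u$, $x_0-y_0\mapsto O(L|x_0-y_0|^{1+\alpha})$ (since $x_0-y_0$ is nearly in $T$), but it sends $-v\mapsto -v+O(\gamma|v|)$ (since $v\in V$ is nearly in $U$, \emph{not} nearly in $T$). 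Hence $\mathbf{u}\approx u-v$, and by the symmetric computation $\mathbf{v}\approx u-v$ as well. Proposition~\ref{prop:tales2} then correctly gives $|\mathbf{u}-\mathbf{v}|$ small, but this tells you nothing about $|u-v|=|G(x)-G(y)|$.

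The paper's fix is to bring in a tangent plane of $\Sigma_2$: it takes $W=T_x\Sigma_2$ and invokes Lemma~\ref{lem:hd-angle} to get $\ang(T_x\Sigma_2,T_{x_0}\Sigma_1)\le C_{\mathrm{ang}}\rho^{\alpha/2}$. It then decomposes not $x-y$ but (essentially) $\bar y-a$, where $\bar y$ and $\bar b$ are intersections of the affine normal fibre $y+Y$ with $x+T_x\Sigma_2$ and $a+T_a\Sigma_1$; this forces $\mathbf{v}=x-a=u$ \emph{exactly} and $\mathbf{u}=\bar y-\bar b\approx y-b=v$ up to genuine graph errors $|\bar y-y|+|\bar b-b|\lesssim L|x-y|^{1+\alpha}$. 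The cross-manifold angle estimate from Lemma~\ref{lem:hd-angle} is the source of the $\rho^{\alpha/2}$; your argument, relying only on H\"older continuity of tangents \emph{within} $\Sigma_1$, cannot produce it at the right place. (Your threshold $\rho^{1-\alpha/2}$ versus the paper's $\sqrt{\rho}$ is harmless; both work for the far regime.)
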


    \begin{figure}[!ht]
        \begin{center}
            \includegraphics*[totalheight=6cm]{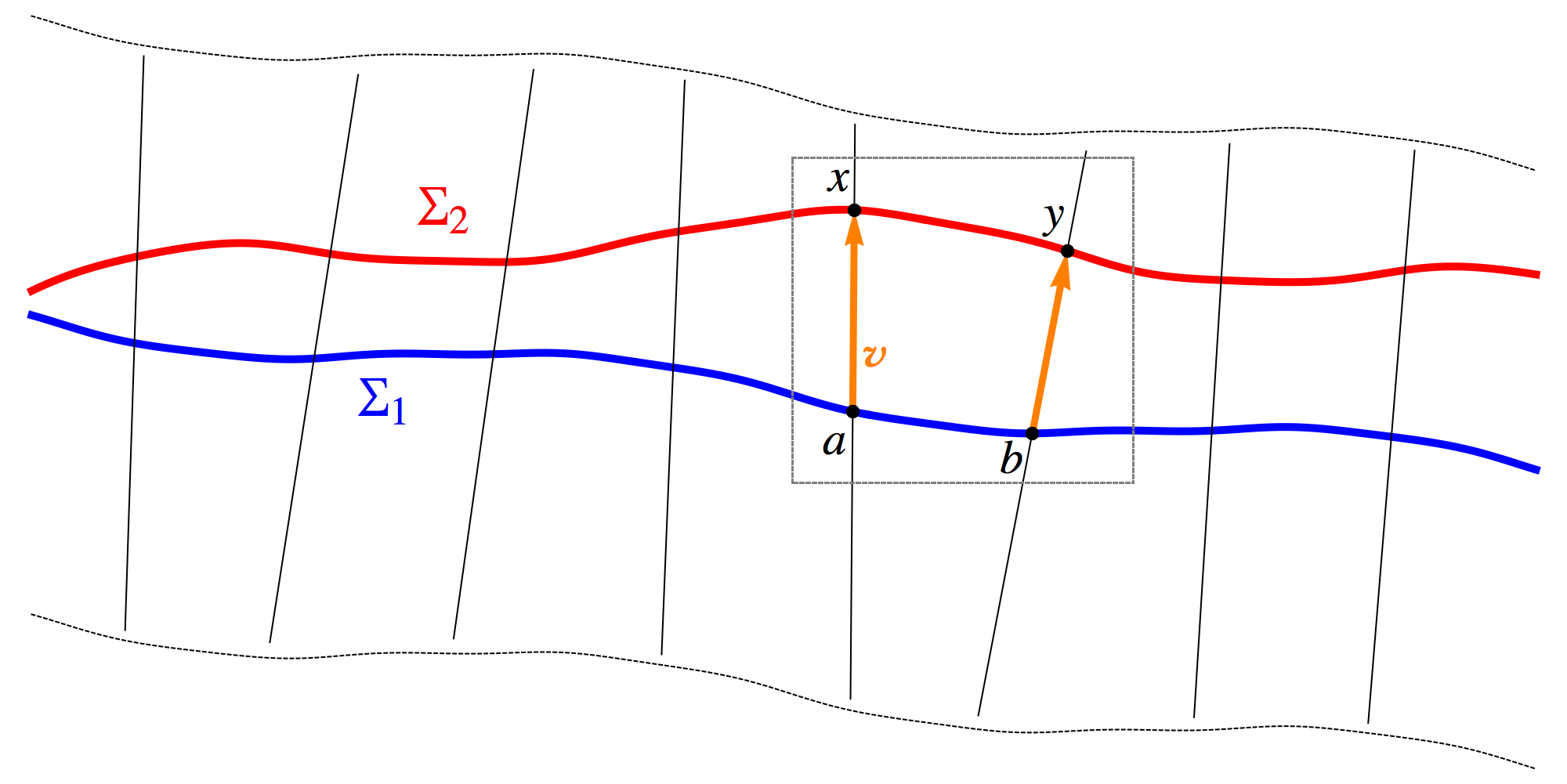}
        \end{center}

        \caption{\label{fig:mapF}
          The definition of $F\colon \Sigma_2\to\Sigma_1$. Thin nearly vertical lines
          represent $\varepsilon$-normal spaces to $\Sigma_1$.
          We have $x=\Psi(a,v)$, so that $\Psi^{-1}|_{\Sigma_2}(x)=(a,v)\in\R^{2n}$. 
          Next, $m_0(a,v)=(a,0)$, and $\Psi(a,0)=a+0=a$. This yields $F(x)=a$.}
    \end{figure}

    \begin{proof}
        Notice that $\Psi^{-1}$ is well-defined on a neighbourhood of $\Sigma_2$ by
        virtue of Lemma \ref{lem:tubular} \ref{i:Psi-onto}, since $\HD(\Sigma_1,
        \Sigma_2) <\delta_{tub}/8,$ so that for $x\in\Sigma_2$ we find a unique
        pair $(\xi,v)\in N_{\delta_{tub}}(\Sigma_1,\Phi_1)$ such that
        $x=\xi+v=\Psi(\xi,v).$ 

        By definition of the map $F$ it is clear that $\im(F)\subset\Sigma_1$,
        however, the converse $\Sigma_1\subset\im(F)$ is not so obvious.
        To establish that we 
        use a topological argument by means of the degree mod 2 as follows.
        For the $l$-plane $P\in G(n,l)$ denote the $l-1$-dimensional sphere
        $$
        \S^{l-1}
        (\xi,r,P):=\xi+\{v\in P:|v|=r\}\quad\textnormal{for $\xi\in\R^n$, $r>0$},
        $$
        and observe that  for $\xi\in\Sigma_1,$ $r\in (0,\delta_{tub}),$
        $$
        \S^{n-m-1}(\xi,r,\im(\Phi_1(\xi)))
        =\Psi(\xi,\im(\Phi_1(\xi))) \cap \partial\Ball{0}{r},
        $$
        so that by virtue of Lemma \ref{lem:tubular} \ref{i:Psi-dist}
        $\S^{n-m-1}(\xi,r,\im(\Phi_1(\xi)))\cap\Sigma_1=\emptyset$ 
        and, in addition,  $\S^{n-m-1}(\xi,r,\im(\Phi_1(\xi)))$ 
        and $\Sigma_1$ are nontrivially linked
        for all $r\in (0,\delta_{tub})$, that is, the map
        $$
        \Sigma_1\times\S^{n-m-1}
        (\xi,r,\im(\Phi_1(\xi)))\ni (w,z)\mapsto\frac{w-z}{|w-z|}
        \in\S^{n-1}
        $$
        has non-vanishing degree mod 2, for each $\xi\in\Sigma_1$ and 
        $r\in (0,\delta_{tub})$, since $\Sigma_1$ is a compact  $m$-dimensional
        $C^1$-submanifold without boundary. Since $\HD(\Sigma_1,\Sigma_2)<
        \delta_{tub}/8$ also $\Sigma_2$ and $\S^{n-m-1}(\xi,r,\im(\Phi_1(\xi)))$
        are non-trivially linked for each $\xi\in\Sigma_1$ and for
        all $r\in (\delta_{tub}/2,\delta_{tub})$, because $\HD(\Sigma_1,\Sigma_2)<
        \HD(\Sigma_1,\S^{n-m-1}(\xi,r,\im(\Phi_1(\xi))))$, again by virtue of
        Lemma \ref{lem:tubular} \ref{i:Psi-dist}.
        Therefore, each $n-m$-dimensional
        disk  
        \begin{equation}\label{disk-rep}
            \D^{n-m}(\xi,r,\im(\Phi_1(\xi)))
            :=\xi+\{v\in \im(\Phi_1(\xi)):|v|\le r\}
            = \im\big(\Psi(\xi,\im(\Phi_1(\xi))\cap \Ball{0}{r})\big)
        \end{equation} 
        for $\xi\in\Sigma_1$ and $r\in (\delta_{tub}/2,\delta_{tub})$
        contains at least one point of $\Sigma_2$; 
        see \cite[Lemma 3.5]{KStvdM-GAFA}.
        Take for fixed $\xi\in\Sigma_1$ and $r=3\delta_{tub}/4$
        one of those points
        $$
        z\in\Sigma_2\cap \D^{n-m}(\xi,3\delta_{tub}/4,\im(\Phi_1(\xi))),
        $$
        and use \eqref{disk-rep} to 
        express $z$ as $z=\xi+v=\Psi(\xi,v)$ for some $v\in\im(\Phi_1(\xi))$
        with $|v|<3\delta_{tub}/4$ to find
        $$
        F(z)=\Psi\circ m_0\circ\Psi^{-1}(z)=\Psi\circ m_0(\xi,v)=\Psi(\xi,0)=\xi,
        $$
        which establishes $\Sigma_1\subset\im(F).$

        \medskip\noindent
        \textbf{Remark.} One can also prove that 
        $\Sigma_1 \subseteq \im F$ later, right after proving that $F$
        is bilipschitz (i.e. after proving that $G$ is Lipschitz): once this is
        established, $F$ is a $C^{1,\alpha}$ diffeomorphism onto its image. Thus, the
        image of $F$ is a submanifold of $\R^n$ --- actually it is a submanifold of
        $\Sigma_1$, of the same dimension as $\Sigma_1$. Hence, it is open in
        $\Sigma_1$, which can be seen using a local graph representation; it is also
        closed in $\Sigma_1$ as a continuous image of a compact set. Therefore,
        $\Sigma_1 \without \im F$ is a connected component of $\Sigma_1$; assuming
        $\Sigma_1 \without \im F$ is not empty and using the definition of
        $\Cmn(R,L,d)$ one sees that $\Sigma_1 \without \im F$ is at least $R$ away 
        from $\im F$ which contradicts the assumption $\HD(\Sigma_1,\Sigma_2) \le
        \delta_{tub}$.

        \medskip\noindent
        \emph{Proof of \ref{i:be:G-lip}:}\,
        If $\rho=\HD(\Sigma_1,\Sigma_2)=0$ then $\Sigma_1=\Sigma_2$ and 
        $F=\Id$, and there is nothing to prove. Assume $\rho>0$ from now on.
        Let $x,y \in \Sigma_2$ and set 
        \begin{gather*}
            a := F(x) \,,
            \quad
            b := F(y) \,,
            \quad
            X = \Phi_1(a) \,,
            \quad
            Y = \Phi_1(b) \,.
        \end{gather*}
        Observe that, by~Lemma~\ref{lem:tubular}\ref{i:Psi-dist}, $|x-a| \le 4
        \dist(\Psi(a,x-a), \Sigma_1) \le 4 \rho$ for $\Psi(a,x-a)=x\in\Sigma_1$,
        and in the same way, $|y-b| \le 4
        \rho$, so that we infer immediately
        \begin{equation}
            \label{eq:far-bilip}
            \begin{split}
                |G(x) - G(y)|
                & \le\,  |a-x| + |b-y| \\
                & \le\, 8 \rho 
                \,\le\, 8 \sqrt{\rho}\, |x-y| \qquad
                \mbox{for all $x,y \in \Sigma_2$ with $|x-y| \ge \sqrt{\rho}$}\,.
            \end{split}
        \end{equation}
        Assume now that $x,y\in\Sigma_2$ satisfy $|x-y| < \sqrt{\rho}$. Note that
        by~Lemma~\ref{lem:tubular}\ref{i:Psi-bilip}
        \begin{gather}
            \label{eq:F-lip}
            \lip(F) \le \lip(\Psi)\lip(m_0)\lip(\Psi^{-1})\le 4 \sqrt 2 \,;\\
            \qquad  \qquad \text{hence,} \quad
            |a-b| = |F(x) - F(y)| \le 4 \sqrt 2 |x-y| < 4 \sqrt{2 \rho} \,.\notag
        \end{gather}
        Set
        \begin{gather*}
            \rho_0 := \min \left\{
                2^{-3}\delta_{tub} \,,\,
                (2 L)^{-2/\alpha} \,,\,
                2^{-6}R^2 \,,\,
                2^{-12/\alpha} C_{\text{{\rm ang}}}(L,4)^{-2/\alpha} \,,\,
                2^{-9} \lip(\Phi_1)^{-2}
            \right\} \,.
        \end{gather*}
        As $\delta_{tub}\le 1$, cf.~\eqref{eq:delta-tb} in the proof of 
        Lemma~\ref{lem:tubular}, we have $\rho_0\le \frac 18$.  If we require
        \begin{equation}\label{1st}
            \HD(\Sigma_1,\Sigma_2) = \rho < \rho_0 \,,
        \end{equation}
        then, as $|a-x|<4\rho$, we can use Lemma~\ref{lem:hd-angle} 
        with $A:=4$ to write
        \begin{equation}
            \label{eq:ang-TaS1-TxS2}
            \ang(T_a\Sigma_1,T_x\Sigma_2) \le C_{\text{{\rm ang}}}(L,4) \rho^{\alpha/2} \, .
        \end{equation}
        Moreover, by~\eqref{eq:F-lip} and the choice of $\rho_0$ above,
        \begin{equation}
            \label{eq:ang-YX}
            \begin{split}
                \ang(X,Y)=\|\Phi_1(a)-\Phi_1(b)\| 
                & \le \lip(\Phi_1) |a-b| \\
                & \le 4 \sqrt 2 \lip(\Phi_1) |x-y| 
                \le 4 \sqrt{2\rho} \lip(\Phi_1) < \tfrac 14\, .
            \end{split}
        \end{equation}
        Thus, by \eqref{eq:ang-YX}, \eqref{eq:ang-TaS1-TxS2}, 
        and our choice of $\rho_0$
        \begin{multline}
            \label{eq:ang-Y-TxS2}
            \ang(Y^{\perp},T_x\Sigma_2) 
            \le \ang(Y^{\perp},X^{\perp}) 
            + \ang(X^{\perp}, T_a\Sigma_1) + \ang(T_a\Sigma_1,T_x\Sigma_2)
            \\
            \overset{\eqref{eq:ang-YX}}{\le} 4\sqrt{2\rho}\lip(\Phi_1)
            + \|\Phi_1(a) - (T_a\Sigma_1)^\perp_\natural\| 
            + \ang(T_a\Sigma_1, T_x\Sigma_2)
            \\
            \overset{\eqref{eq:ang-TaS1-TxS2}}{\le} 4 \sqrt{2\rho} \lip(\Phi_1) + \varepsilon + C_{\text{{\rm ang}}}(L,4) \rho^{\alpha/2} < \tfrac 12\, .
        \end{multline}
        Similarly,
        \begin{equation}
            \label{eq:ang-Y-TaS1}
            \ang(Y^{\perp}, T_a\Sigma_1)\,\,
            \le \,\, \ang(Y^{\perp}, X^{\perp}) + \ang(X^{\perp}, T_a\Sigma_1)
            \,\, \overset{\eqref{eq:ang-YX}}{\le} \,\,
            4\sqrt{2\rho} \lip(\Phi_1) + \varepsilon \,\, < \,\, \tfrac 12 \,.
        \end{equation}

        \begin{figure}[!ht]
            \begin{center}
                \includegraphics*[totalheight=6.5cm]{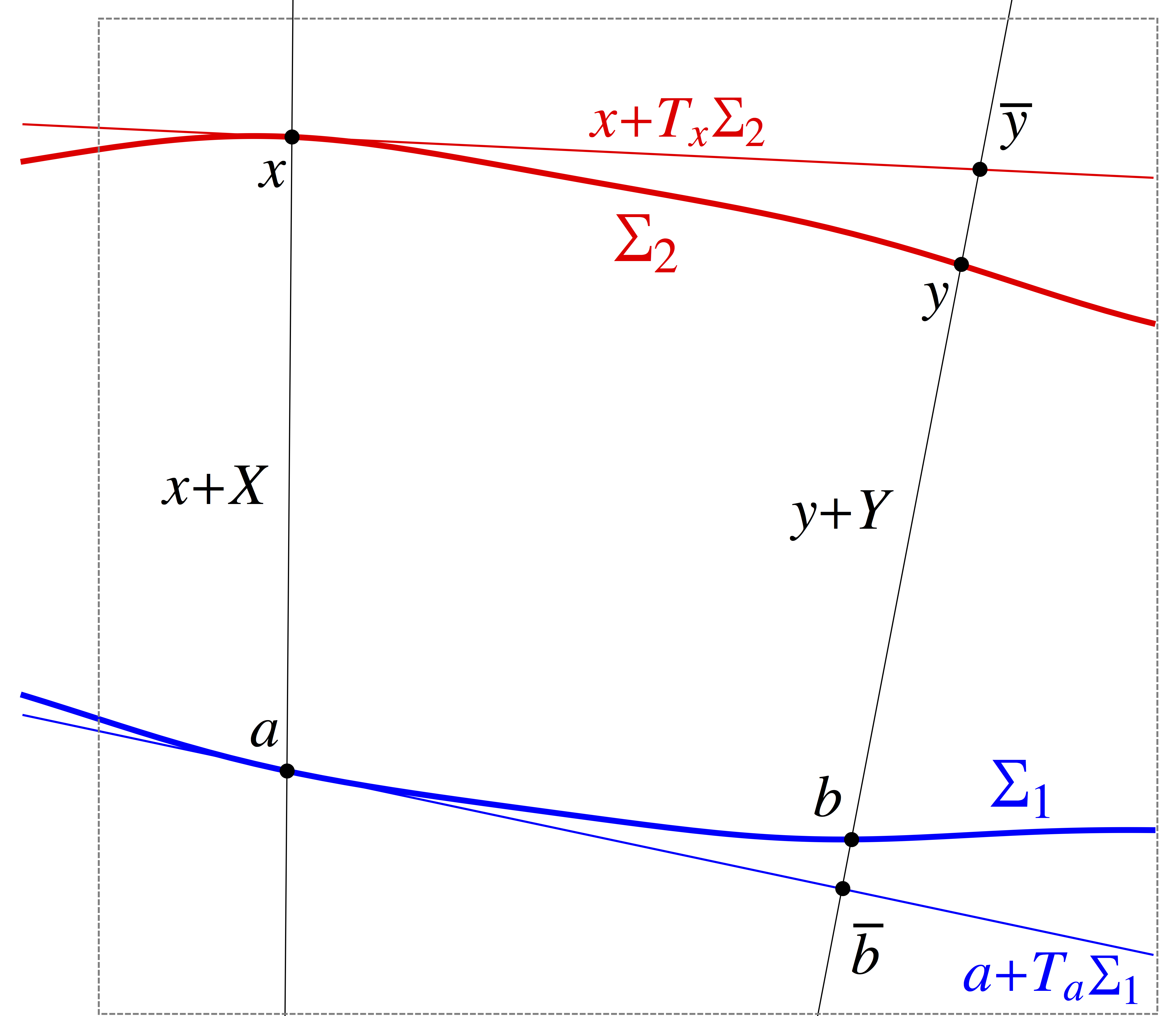}
            \end{center}

            \caption{\label{fig:lipG} An enlarged fragment 
              of \textsc{Figure~\ref{fig:mapF}}.
              We have $G(x)=a-x$, $G(y)=b-y$. 
              However, to prove that $G$ is Lipschitz, 
              we do not deal with $(a-x)-(b-y)$ directly. Instead, we use
              Proposition~\ref{prop:tales2}
              to estimate $|(a-x)-(\bar{b}-\bar{y})|$, 
              and add an error term $|b-\bar{b}|
              +|y-\bar{y}|$, which is small as both $\Sigma_1$ and $\Sigma_2$ are 
              of class $\Cmn(R,L,d)$. The final dependence of 
              $\lip(G)$ on a power of 
              $\rho$ is due to this error term. 
            }
        \end{figure}

        These angle conditions imply by Lemma \ref{lem:angle-isomorphism}
        that $Y \cap T_a\Sigma_1 = Y \cap T_x\Sigma_2 =
        \{0\}$, Therefore, there exist points $\bar{b}$, $\bar{y} \in \R^n$ (see Figure~\ref{fig:lipG}) such
        that
        \begin{gather}\label{oblproj}
            (y + Y) \cap (a + T_a\Sigma_1) = \{ \bar{b} \} \,,
            \quad \text{and} \quad
            (y + Y) \cap (x + T_x\Sigma_2) = \{ \bar{y} \} \,.
        \end{gather}
        Indeed, the characterisation $\{P(y-a)\}=((y-a)+Y)\cap T_a\Sigma_1$
        of the well-defined oblique projection $P:\R^n\to T_a\Sigma_1$ along $Y$
        (see Remark \ref{rem:oblique-proj})
        immediately gives $\bar{b}:=P(y-a)+a$, and similarly one finds $\bar{y}.$

        To prove that $G$ is Lipschitz we need to estimate $|G(x) - G(y)| = |(x-a) -
        (y-b)|$. To this end, we shall first estimate $|(x-a) - (\bar y - \bar b)|$
        treating $|(\bar y - \bar b) - (y-b)| \le |\bar b - b| + |\bar y - y|$ as
        a~small error term. Employing \eqref{eq2-tales2} of 
        Proposition~\ref{prop:tales2} with
        \begin{gather*}
            U:= Y \,,\,
            \quad
            V := X \,,\,
            \quad
            W := T_x\Sigma_2 \,,\,
            \quad
            T := T_a\Sigma_1 \,,\,
            \\
            \mathbf u := \bar y - \bar b \,,\,
            \quad
            \mathbf v := x - a  \,,\,
            \quad
            \mathbf w := \bar y - x \,,\,
            \quad
            \mathbf t := \bar b - a \,,\, 
        \end{gather*}
        in combination with \eqref{eq:ang-TaS1-TxS2}, \eqref{eq:ang-YX},
        \eqref{eq:ang-Y-TxS2}, \eqref{eq:ang-Y-TaS1} to estimate the angles
        by our choice of $\rho_0$
        \begin{gather*}
            \ang(W,T)\le\theta:=C_{\text{{\rm ang}}}(L,4) \rho^{\alpha/2}, 
            \qquad 
            \ang(U,V)\le\gamma:=4 \sqrt 2 \lip(\Phi_1) |x-y|,\\
            \qquad 
            \max\Big\{\!\ang(T,U^\perp),\ang(T,V^\perp)\Big\}\le \lambda 
            := \frac 12,
        \end{gather*}
        we obtain
        \begin{multline}
            \label{eq:initial-estimate}
            |(x-a) - (\bar y - \bar b)| 
            \\
            \overset{\eqref{eq2-tales2}}{\le} 8 \sqrt 2 \lip(\Phi_1) |x-y| \left( |\bar y - \bar b| + 2 C_{\text{{\rm ang}}}(L,4) \rho^{\alpha/2} |\bar  y - x| \right) 
            + 2 C_{\text{{\rm ang}}}(L,4) \rho^{\alpha/2} |\bar  y - x| \,.
        \end{multline}
        By \eqref{eq:F-lip} and the choice of $\rho_0$ we have 
        \[
        |a-b|<4\sqrt{(2\rho)} <4 (2^{-5}R^2)^{1/2}<R\, , 
        \qquad
        |a-b|^{1+\alpha} \le (4\sqrt 2)^2 |x-y|^{1+\alpha}= 2^5 |x-y|^{1+\alpha}\, .
        \]
        Thus, 
        since $\bar y - y \in Y$ and $\bar b - b= (\bar b -y)+(y-b) \in Y$
        (we use \eqref{oblproj} and note that $y=\Psi(b,y-b)$ 
        with $y-b\in\Phi_1(b)=Y$), it follows
        from~\eqref{eq:F-lip}, \eqref{eq:ang-Y-TxS2} and~~\eqref{eq:ang-Y-TaS1} that
        \begin{equation}
            \label{est:error}
            \begin{split}
                |\bar y - y| &\le 2 \dist(y, x + T_x\Sigma_2) \le 2L|x-y|^{1+\alpha} 
                \\ \text{and} \quad
                |\bar b - b| &\le 2 \dist(b, a + T_a\Sigma_1) 
                \le 2L|a-b|^{1+\alpha} \le 2^6 L |x-y|^{1+\alpha} \,,
            \end{split}
        \end{equation}
        where we estimated similarly as in \eqref{16}.
        Hence, since we observed $|y-b|\le 4\rho$ earlier, and   $|x-y|<\sqrt{\rho} \le \frac 1{\sqrt 8} <1$, we obtain 
        \begin{gather*}
            |\bar y - \bar b| \le |\bar y - y| +|y-b| + |\bar b - b| 
            \le 2^7 L |x-y|^{1+\alpha}
            +4\rho   < (2^7L+4)\rho^{\alpha/2} \, ,
            \\
            |\bar y - x| \le |\bar y - y| + |y - x| 
            \le |y - x| \left( 1 + 2 L \rho^{\alpha/2} \right) 
            \le 2 |x-y| < 2 \sqrt{\rho}  < 1 \,.
        \end{gather*}
        Therefore, plugging these 
        two estimates into~\eqref{eq:initial-estimate}, 
        and adding the error $ |\bar y - y| + |\bar b - b|$ which can be 
        estimated by \eqref{est:error},
        we compute 
        \begin{eqnarray*}
            \lefteqn{|G(x) - G(y)|}\\
            & = &
            |(x-a) - (y-b)| \le |(x-a) - (\bar y - \bar b)|
            + |\bar y - y| + |\bar b - b|
            \\
            & \overset{\eqref{eq:initial-estimate},\eqref{est:error}}{\le} & 
            |x-y|\rho^{\alpha/2}\Big\{8 \sqrt 2 \lip(\Phi_1)
            \big[2^7L+4  +2 C_{\text{{\rm ang}}}(L,4)\big]+
            4C_{\text{{\rm ang}}}(L,4)+2L+2^6L\Big\} \,.
        \end{eqnarray*}
        As $\rho<1$, taking into account  $C_{\text{{\rm ang}}}(L,4)=257L+8$ 
        (cf. Lemma~\ref{lem:hd-angle}), 
        we finally obtain an estimate of the Lipschitz constant of $G$,
        \begin{equation}
            \label{eq:G-lip}
            |G(x)-G(y)|\le C_l\rho^{\alpha/2} |x-y|, 
            \qquad C_l=C_l(L,\lip(\Phi_1)):=10^4(\lip(\Phi_1)+1)(L+1)\, .
        \end{equation}

        \medskip\noindent
        \emph{Proof of \ref{i:be:G-small}:}
        Directly from the definition of $\Psi$ we infer $\Psi(F(x),x-F(x)) =
        F(x)+x-F(x)=x$ for any $x\in\Sigma_2$, so that we obtain from Lemma
        \ref{lem:tubular}\ref{i:Psi-dist} 
        \begin{gather*}
            \dist(x,\Sigma_1) = \dist(\Psi(F(x),x-F(x)),\Sigma_1) 
            \overset{L.\ref{lem:tubular}\ref{i:Psi-dist}}{\ge} \tfrac 14 
            |x - F(x)|
            = \tfrac 14 |G(x)| \quad\forall x\in\Sigma_2\,.
        \end{gather*}

        \medskip\noindent
        \emph{Proof of \ref{i:be:F-diffeo}:} Since $F$ is a composition of
        $C^1$-smooth functions it is $C^1$-smooth. We can find $\rho_G
        =\rho_G(R,L,\alpha,\lip(\Phi_1))\in (0,\rho_0)$
        so small that   
        \begin{equation}
            \label{def:rhoG}
            C_l \rho^{\alpha/2} < 1\qquad\mbox{for all $\rho \in (0,
              \rho_G)$},
        \end{equation}
        and then
        \begin{multline*}
            |F(x)-F(y)|\le |x-y|+|F(x)-x-(F(y)-y)| \\ =|x-y|+|G(x)-G(y)| 
            \overset{\eqref{eq:G-lip}}{\le} \big(1+C_l\rho^{\alpha/2}\big)|x-y|.
        \end{multline*}
        The lower estimate in \ref{i:be:F-diffeo} follows in the same manner; hence
        $F$ is bilipschitz and, in~consequence, a diffeomorphism.
    \end{proof}

    As a corollary we can can establish a bound on the Hausdorff-distance
    $\HD(\Sigma_1,\Sigma_2)$ under which two submanifolds $\Sigma_1,\Sigma_2\in
    \Cmn(R,L,d)$ are actually ambient isotopic. Moreover, in Lemma \ref{lem:amb-diff}
    we construct a global diffeomorphism of the ambient space mapping
    $\Sigma_2$ onto $\Sigma_1$. Both results will be essential ingredients in the
    proof of  Theorem \ref{thm:diff}.

    \begin{cor}[\textbf{ambient isotopies}]
        \label{cor:isotopy}
        For $R,L,d\in (0,\infty)$, $\alpha\in (0,1]$, and $\varepsilon\in
        (0,1/100)$ let 
        $\Sigma_1, \Sigma_2\in\Cmn(R,L,d) $ with $\varepsilon$-normal map
        $\Phi_1$ for $\Sigma_1$, such that $\HD(\Sigma_1,\Sigma_2)\in (0,
        \rho_G)$, where $\rho_G=\rho_G(R,L,\alpha,\eps,\lip(\Phi_1))$ 
        is the 
        constant of  Lemma~\ref{lem:bilip-est}. 
        Then
        $\Sigma_1$ and $\Sigma_2$ are $C^1$-ambient isotopic.
    \end{cor}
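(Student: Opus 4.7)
The plan is to construct $H$ explicitly by writing $\Sigma_2$ as a graph over $\Sigma_1$ inside the tubular neighbourhood $\Psi:=\Psi_{\delta_{tub}}[\Sigma_1,\Phi_1]$ supplied by Lemma~\ref{lem:tubular}, and then sliding this graph down to $\Sigma_1$ along the $\Phi_1$-fibres, cutting off smoothly to the identity outside the tube. Throughout, set $\rho:=\HD(\Sigma_1,\Sigma_2)$.

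First I would introduce the $C^1$ projection $\pi\colon\im\Psi\to\Sigma_1$, $\pi(\Psi(a,v)):=a$, which satisfies $\lip(\pi)\le 4$ by Lemma~\ref{lem:tubular}\,\ref{i:Psi-bilip}. Using the bilipschitz diffeomorphism $F\colon\Sigma_2\to\Sigma_1$ from Lemma~\ref{lem:bilip-est}, I would define the $C^1$ section
\[
s\colon\Sigma_1\to\R^n,\qquad s(a):=F^{-1}(a)-a\in\im\Phi_1(a),
\]
so that $\Sigma_2=\{a+s(a):a\in\Sigma_1\}$. Lemma~\ref{lem:bilip-est}\,\ref{i:be:G-small} applied at $x=F^{-1}(a)$ gives $|s(a)|\le 4\rho$, and from $s(a_1)-s(a_2)=-\big(G(F^{-1}(a_1))-G(F^{-1}(a_2))\big)$ combined with Lemma~\ref{lem:bilip-est}\,\ref{i:be:G-lip},\,\ref{i:be:F-diffeo} I obtain $\lip(s)\le C'\rho^{\alpha/2}$ with $C'=C'(R,L,\alpha,\varepsilon,\lip(\Phi_1))$. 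Extending to $\tilde s(p):=s(\pi(p))$ on $\im\Psi$ and fixing a smooth cutoff $\chi\colon[0,\infty)\to[0,1]$ with $\chi\equiv 1$ on $[0,\delta_{tub}/4]$ and $\chi\equiv 0$ on $[\delta_{tub}/2,\infty)$, I would put
\[
H(p,t):=\begin{cases} p-(1-t)\,\chi(|p-\pi(p)|)\,\tilde s(p), & p\in\im\Psi,\\ p, & p\notin\im\Psi,\end{cases}
\]
which is $C^1$ on $\R^n\times[0,1]$ because the two cases agree on the open collar where $\chi(|p-\pi(p)|)=0$.

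The endpoint conditions come out for free: $H(\cdot,1)=\Id$ by construction, and for $x=a+s(a)\in\Sigma_2$ with $a=F(x)=\pi(x)$ one has $|x-\pi(x)|=|s(a)|\le 4\rho\le\delta_{tub}/4$ (after possibly shrinking $\rho_G$), so $\chi=1$ at $x$ and $H(x,0)=x-s(a)=a\in\Sigma_1$; thus $H(\Sigma_2\times\{0\})=\Sigma_1$.

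The principal obstacle is showing that each $H_t:=H(\cdot,t)$ is a $C^1$-diffeomorphism of~$\R^n$. Expanding
\[
\chi(|p_1-\pi(p_1)|)\tilde s(p_1)-\chi(|p_2-\pi(p_2)|)\tilde s(p_2)
\]
into a $\chi$-difference term and a $\tilde s$-difference term and plugging in $\lip(\pi)\le 4$, $\|\chi'\|_\infty\le 8/\delta_{tub}$, $|\tilde s|\le 4\rho$, and $\lip(\tilde s)\le 4C'\rho^{\alpha/2}$, I expect $\lip(H_t-\Id)\le C''\rho^{\alpha/2}$ uniformly in $t\in[0,1]$, with $C''$ independent of $\rho$. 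Shrinking $\rho_G$ once more so that $C''\rho_G^{\alpha/2}<1$ then makes $H_t$ a bilipschitz perturbation of the identity on~$\R^n$; being $C^1$ with non-singular differential, it is a $C^1$-diffeomorphism of~$\R^n$ by the inverse function theorem, completing the ambient isotopy.
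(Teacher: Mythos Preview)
Your argument is correct and takes a genuinely different route from the paper's. The paper does \emph{not} build the ambient isotopy by hand: it only constructs the non-ambient isotopy of embeddings
\[
h(x,t)=\Psi\circ m_t\circ\Psi^{-1}\big|_{\Sigma_2}(x),\qquad (x,t)\in\Sigma_2\times[0,1],
\]
checks that each $h(\cdot,t)$ is a $C^1$-embedding (as a composition of bilipschitz $C^1$ maps for $t\in(0,1]$, and equal to $F$ for $t=0$), and then invokes Blatt's $C^1$ isotopy extension theorem \cite[Thm.~1.2]{Bl09a} to promote $h$ to an ambient $C^1$-isotopy. This needs no smallness beyond $\rho<\rho_G$.

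Your construction is instead a one-parameter version of the ambient diffeomorphism built later in Lemma~\ref{lem:amb-diff}: you translate each $\Phi_1$-fibre by $-(1-t)s(a)$ and cut off in the fibre variable. This has the advantage of being completely self-contained (no external isotopy extension result). Two small points deserve tightening. First, the bound $\lip(s)\le C'\rho^{\alpha/2}$ with $C'$ independent of $\rho$ uses $\lip(F^{-1})\le(1-C_l\rho^{\alpha/2})^{-1}$, which is only uniformly bounded once you shrink the threshold below the $\rho_G$ of Lemma~\ref{lem:bilip-est}; so strictly speaking you prove the corollary for a possibly smaller constant (which is what Lemma~\ref{lem:amb-diff} calls $\rho_g$). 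This is harmless for every application in the paper. Second, ``by the inverse function theorem'' gives only that $H_t$ is a local $C^1$-diffeomorphism; to conclude it is a diffeomorphism of $\R^n$ you also need surjectivity, which follows from $\lip(H_t-\Id)<1$ via the contraction mapping principle (or the degree argument of Proposition~\ref{prop:deg-one}). With these clarifications your proof goes through.
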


    \begin{proof}
        According to \cite[Theorem 1.2]{Bl09a} it suffices to come up with
        a $C^1$-isotopy $h:\Sigma_2\times [0,1]\to\R^n$,  i.e., a family
        of $C^1$-embeddings $h_t(\cdot):=h(\cdot,t):\Sigma_2\to\R^n$, with
        \begin{equation}\label{iso}
            \Sigma_1 = h(\Sigma_2\times\{0\}) \quad\textnormal{and}\quad
            h(\Sigma_2\times\{1\})=\Sigma_2.
        \end{equation}
        Indeed, the map $h(x,t):=\Psi\circ m_t\circ\Psi^{-1}|_{\Sigma_2}(x)$
        for $(x,t)\in\Sigma_2\times [0,1]$, and with $m_t(y,v)=(y,tv)$ and
        $\Psi:=\Psi_{\delta_{tub}}[\Sigma_1,\Phi_1]$ for
        $y,v\in\R^n$ will do. Here $\delta_{tub}$ 
        is the constant from Lemma
        \ref{lem:tubular} defined in \eqref{eq:delta-tb}, and $\Phi_1$ is
        an $\varepsilon$-normal map for $\Sigma_1$.

        Observe that part \ref{i:Psi-onto} of Lemma \ref{lem:tubular}
        implies that $\Sigma_2\subset\Psi(N_{\delta_{tub}}(\Sigma_1,\Phi_1))$,
        since we have $\HD(\Sigma_1,\Sigma_2)<\rho_G<\delta_{tub}/8$ (see Lemma 
        \ref{lem:bilip-est}).
        Therefore, $\Psi^{-1}$ is a well-defined $C^1$-map 
        in an open neighbourhood
        of $\Sigma_2$, which implies that $h$ itself as a composition 
        of $C^1$-maps is 
        of class  $C^1.$ With Lemma \ref{lem:bilip-est} \ref{i:be:F-diffeo} 
        we obtain
        $h(\Sigma_2\times\{0\})=F(\Sigma_2)=\Sigma_1$, and $h(\cdot,0)$
        is a bilipschitz diffeomorphism from $\Sigma_2$ onto $\Sigma_1.$
        Moreover, one immediately sees that
        $h(x,1)=\Psi\circ m_1\circ\Psi^{-1}(x)=x$ for all $x\in\Sigma_2$
        by the very definition of $m_t$ for $t=1$, so that
        $h(\Sigma_2\times\{1\})=\Sigma_2$, which proves \eqref{iso}.

        So, it remains to be shown that $h(\cdot,t):\Sigma\to\R^n$ is an
        embedding for each $t\in (0,1)$. Note that $\Psi:N_\delta(\Sigma_1,\Phi_1)
        \to\R^n$ is bilipschitz  for all $\delta\in (0,\delta_{tub}]$
        by Lemma \ref{lem:tubular}  \ref{i:Psi-bilip}, and hence
        so is $\Psi^{-1}$ on $\Psi(N_{\delta_{tub}}(\Sigma_1,\Phi_1))$. 
        In addition,  $m_t$ is bilipschitz for $t \in (0,1)$, and
        $m_t(N_\delta(\Sigma_1,\Phi_1))\subset N_\delta(\Sigma_1,\Phi_1)$
        for all $t\in [0,1],  $ $\delta\in (0,\delta_{tub}].$
        Recall again from Lemma \ref{lem:tubular} \ref{i:Psi-onto} that
        $\HD(\Sigma_1,\Sigma_2)<\rho_G<\delta_{tub}/8$ implies
        that
        $\Sigma_2\subset\Psi(N_{2\HD(\Sigma_1,\Sigma_2)}(\Sigma_1,\Phi_1))$,
        so that $\Psi^{-1}|_{\Sigma_2}$ is just the restriction of a $C^1$-bilipschitz
        map, and in consequence $h(\cdot,t)$ is bilipschitz and $C^1$-smooth,
        and therefore a $C^1$-diffeomorphism onto its image $h(\Sigma_2,t)$
        for each $t\in [0,1]$. Consequently, $h(\cdot,t):\Sigma_2\to\R^n$
        is an embedding for each $t\in [0,1].$
    \end{proof}

    \begin{lem}[\textbf{diffeomorphisms of the ambient space}]
        \label{lem:amb-diff}
        For $R,L,d > 0$, $\alpha \in (0,1]$ there exist  constants
        $\rho_g:=\rho_g(R,L,\alpha,n,m)$ 
        and $C_J=C_J(R,L,\alpha,n,m)\ge 0$,
        such that for any two manifolds $\Sigma_1,\Sigma_2\in\Cmn(R,L,d)$ with
        $\rho:=\HD(\Sigma_1,\Sigma_2)\in (0,\rho_g]$ there exists a 
        bilipschitz $C^1$-diffeomorphism $J : \R^n \to \R^n$
        satisfying
        \begin{enumerate}
        \item $J(\Sigma_2) = \Sigma_1$,
        \item $J(x) = x$ for $x \in \R^n \without (\Sigma_2 + \Ball{0}{\rho_g})$,
        \item $(1-C_J\rho^{\alpha/2})|z_1-z_2| \le |J(z_1)-J(z_2)|
            \le (1 + C_{J} \rho^{\alpha/2})|z_1-z_2|$ for all $z_1,z_2\in\R^n$. 
        \end{enumerate}
    \end{lem}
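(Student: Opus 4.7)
My plan is to extend the bilipschitz map $F:\Sigma_2\to\Sigma_1$ built in Lemma~\ref{lem:bilip-est} to an ambient diffeomorphism by means of a cut-off supported in a tubular neighbourhood of $\Sigma_2$. First, I apply Lemma~\ref{lem:mollify} to $\Sigma_1$ and to $\Sigma_2$ with a fixed $\varepsilon\in(0,10^{-2})$ (e.g.\ $\varepsilon=1/200$) to obtain $\varepsilon$-normal maps $\Phi_1\colon\Sigma_1\to\Gr$ and $\Phi_2\colon\Sigma_2\to\Gr$ whose Lipschitz constants depend only on $R,L,\alpha,m,n$. Lemma~\ref{lem:tubular} then produces $C^1$-bilipschitz embeddings $\Psi_i=\Psi_{\delta_i}[\Sigma_i,\Phi_i]$ defined on $\delta_i$-normal neighbourhoods, with $\delta_i=\delta_{tub}(R,L,\alpha,\varepsilon,\lip(\Phi_i))$ also depending only on $R,L,\alpha,m,n$; in particular the projection $\pi_2(q):=$ first component of $\Psi_2^{-1}(q)$ is $C^1$-smooth, at most $4$-Lipschitz on $\Psi_2(N_{\delta_2}(\Sigma_2,\Phi_2))$, and satisfies $\pi_2|_{\Sigma_2}=\Id$.

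Next, using $\Phi_1$, I apply Lemma~\ref{lem:bilip-est} to get the bilipschitz diffeomorphism $F=\Psi_1\circ m_0\circ\Psi_1^{-1}|_{\Sigma_2}\colon\Sigma_2\to\Sigma_1$ with residual $G=F-\Id$, provided $\rho:=\HD(\Sigma_1,\Sigma_2)<\rho_G$; Lemma~\ref{lem:bilip-est} guarantees $|G(y)|\le 4\rho$ on $\Sigma_2$ and $\lip(G)\le C_l\rho^{\alpha/2}$, with $C_l$ depending only on $R,L,\alpha,m,n$. I now set $\rho_g:=\min\{\delta_2/4,\rho_G\}$ (further shrunk later) and fix a smooth cut-off $\chi\colon\R^n\to[0,1]$ with $\chi\equiv1$ on $\Sigma_2+\Ball{0}{\rho_g/2}$, $\chi\equiv0$ outside $\Sigma_2+\Ball{0}{\rho_g}$, and $\|\nabla\chi\|_\infty\le C/\rho_g$. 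Define
\begin{equation*}
H(q):=\begin{cases}\chi(q)\,G(\pi_2(q))&\text{if }q\in\Psi_2(N_{\delta_2}(\Sigma_2,\Phi_2)),\\ 0&\text{otherwise},\end{cases}\qquad J:=\Id+H.
\end{equation*}
By construction $H\in C^1(\R^n,\R^n)$ (the two definitions agree smoothly across $\{\chi=0\}$), $J=\Id$ outside $\Sigma_2+\Ball{0}{\rho_g}$, and $J|_{\Sigma_2}=\Id+G=F$, so property~(i) $J(\Sigma_2)=\Sigma_1$ and property~(ii) are automatic.

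The core calculation is the bilipschitz estimate. Writing $H(q_1)-H(q_2)=(\chi(q_1)-\chi(q_2))G(\pi_2(q_1))+\chi(q_2)(G(\pi_2(q_1))-G(\pi_2(q_2)))$ and combining the bounds from Lemma~\ref{lem:bilip-est} with $\lip(\pi_2)\le 4$ and $\|\nabla\chi\|_\infty\le C/\rho_g$ yields
\begin{equation*}
\lip(H)\le \tfrac{4C\rho}{\rho_g}+4C_l\rho^{\alpha/2}\le C_J\,\rho^{\alpha/2},
\end{equation*}
where I use that $\alpha\le 1$ and $\rho\le\rho_g\le 1$ imply $\rho\le\rho^{\alpha/2}$, and where $C_J:=4(C/\rho_g+C_l)$ depends only on $R,L,\alpha,m,n$. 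After possibly shrinking $\rho_g$ so that $C_J\,\rho_g^{\alpha/2}\le 1/2$, the estimate in (iii) follows, and $J$ is a bijection of $\R^n$ by the Banach fixed point theorem applied to the contraction $q\mapsto p-H(q)$; finally $DJ=\Id+DH$ is invertible pointwise via the Neumann series, making $J$ a global $C^1$-diffeomorphism.

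The main obstacle is calibrating the width $\rho_g$ of the cut-off region so that the product $\|\nabla\chi\|_\infty\cdot\|G\|_\infty\approx\rho/\rho_g$ is of the same order $\rho^{\alpha/2}$ as the intrinsic Lipschitz constant of $G\circ\pi_2$. This is precisely what makes the construction work with a \emph{fixed} $\rho_g$ depending only on the structural parameters: the bound $|G|\le 4\rho$ (which is better than merely $\lip(G)\le C_l\rho^{\alpha/2}$) absorbs the factor $1/\rho_g$ once the cut-off is chosen, and the resulting Lipschitz constant of $H$ tends to $0$ at the rate $\rho^{\alpha/2}$ as $\rho\to 0$.
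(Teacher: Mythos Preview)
Your proof is correct and follows essentially the same strategy as the paper: extend $G=F-\Id$ to a tubular neighbourhood of $\Sigma_2$ via the $C^1$ retraction coming from $\Psi_2^{-1}$, multiply by a cut-off of scale $\sim\rho_g$, and use $|G|\le 4\rho$ together with $\lip(G)\le C_l\rho^{\alpha/2}$ to get $\lip(H)\le C_J\rho^{\alpha/2}$. The only cosmetic difference is your choice of cut-off: you take a smooth $\chi$ depending directly on ambient position (which one obtains, e.g., by mollifying the characteristic function of $\Sigma_2+\Ball{0}{3\rho_g/4}$), whereas the paper uses a cut-off $\phi(|\pi_2\circ\Psi_2^{-1}(\cdot)|)$ depending on the normal coordinate in tubular coordinates; both lead to the same bilipschitz estimate and the same $C^1$-gluing with the identity.
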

    The constant $\rho_G$ was introduced in Lemma \ref{lem:bilip-est}.

    \begin{proof}
        Set $\varepsilon:=1/200.$ Lemma \ref{lem:mollify} guarantees the existence
        of  $\varepsilon$-normal maps $\Phi_i : \Sigma_i \to \Gr$ for
        $\Sigma_i$, $i=1,2$.  
        Define $\Psi_2 = \Psi_{\delta_{tub}}[\Sigma_2,\Phi_2]$ as
        in Definition~\ref{def:Psi-N}. 

        Choose $\rho_0 =
        \rho_0(R,L,\alpha,\lip(\Phi_1),\lip(\Phi_2))\in (0,\min\{\delta_{tub}/16,\rho_G/2\})$ so small that
        \begin{gather}
            \label{eq:rho-0}
            4 C_{l} \rho^{\alpha/2} < \varepsilon=\frac{1}{200} 
            \quad\textnormal{for all $\rho\in (0,\rho_0]$},
        \end{gather}
        where we denote by $\delta_{tub}=\delta_{tub}(R,L,\alpha,\lip(\Phi_2))$
        the tubular radius for $\Sigma_2$ established in Lemma \ref{lem:tubular} for our
        fixed $\eps=1/200.$ Moreover, $\rho_G=\rho_G(R,L,\alpha,\lip(\Phi_1))$ and 
        $C_{l}=C_{l}(L,\lip(\Phi_1))$ are the constants estimating
        the maps $F,G:\Sigma_2\to\R^n$ 
        in Lemma~\ref{lem:bilip-est} for $\eps=1/200$.
        Consider the projections $\pi_1,\pi_2 \colon 
        N_2:=N_{\delta_{tub}/2}(\Sigma_2,\Phi_2) \to
        \R^n$ via  $\pi_1(x,v):=x$ and $\pi_2(x,v):=v$ for $(x,v)\in N_2$, 
        define the map $\lambda:N_2\to\R^n$ by $\lambda(x,v):=F(x)+v,$ and finally,
        \begin{gather*}
            \tilde{J} : \Sigma_2 + \Ball{0}{\rho_0} \to \R^n \,,
            \quad
            \tilde{J} = \lambda \circ \Psi_2^{-1}
            \\
            \text{and the map} \quad
            I : \Sigma_2 + \Ball{0}{\rho_0} \to \R^n \,,
            \quad
            I(z) = \tilde{J}(z) - z \,.
        \end{gather*}
        measuring the deviation of $\tilde{J}$ from the identity.
        According to 
        Lemma~\ref{lem:bilip-est}\ref{i:be:G-small} one has for $x \in \Sigma_2$ 
        \begin{equation}
            \label{eq:sup-I}
            \begin{split}        
                |G(x)| = |F(x) - x| 
                & \le 4 \dist(x,\Sigma_1) \\
                & \le 4\HD(\Sigma_2,\Sigma_1)=
                4  \rho
                \le 4\rho_0<\delta_{tub}/4 \quad
                \textnormal{for all $\rho\in (0,\rho_0]$}\,,
            \end{split}
        \end{equation}
        and therefore, for $z\in\Sigma_2+\Ball{0}{\rho_0}$ with $\Psi_2^{-1}(z)=(x,v),$
        \begin{gather}\label{41A}
            I(z) = \tilde{J}(z) - z = (F(x) + v) - (x+v) = F(x) - x 
            = G(x) = G \circ \pi_1 \circ \Psi_2^{-1}(z) \,,
        \end{gather}
        so
        \begin{equation}\label{41B}
            |I(z)|\le 4\rho <\delta_{tub}/4,
        \end{equation}
        whence $\tilde{J}(\Sigma_2+\Ball{0}{\rho_0})\subset\Sigma_2+
        \Ball{0}{\delta_{tub}/2}.$ 
        The identity \eqref{41A} together with 
        Lemma~\ref{lem:tubular}\ref{i:Psi-bilip} applied to $\Sigma_2$ and $\Psi_2$
        and~Lemma~\ref{lem:bilip-est}\ref{i:be:G-lip} implies
        \begin{gather}
            \label{eq:lip-I}
            \lip(I) \le \lip(G) \lip(\Psi_2^{-1}) \le 4 C_{l} \rho^{\alpha/2} \,.
        \end{gather}
        Thus, we can estimate the difference $\tilde{J}(z_1)-\tilde{J}(z_2)=
        I(z_1)-I(z_2)+z_1-z_2$ using \eqref{41A} for $z_1,z_2\in\Sigma_2+
        \Ball{0}{\rho_0}$ as
        \begin{gather}
            \label{eq:bilip-J}
            (1 - 4 C_{l} \rho^{\alpha/2}) |z_1 - z_2|
            \le |\tilde J(z_1) - \tilde J(z_2)|
            \le (1 + 4 C_{l} \rho^{\alpha/2}) |z_1 - z_2|\,,
        \end{gather}
        so that
        by our choice of $\rho_0$ in
        \eqref{eq:rho-0},
        $\tilde{J}$ turns out to be bilipschitz, and since $\Psi_2^{-1}$ is $C^1$
        on $\Psi_2(N_2)$ and
        $$
        \Sigma_2+\Ball{0}{\rho_0}\subset\Sigma_2
        +\Ball{0}{\delta_{tub}/4}\overset{Lem. 
          \ref{lem:tubular}}{\subset}\Psi_2(N_2),
        $$
        and $\lambda$ is $C^1$ on $N_2$, the map $\tilde{J}$ is 
        a $C^1$-diffeomorphism
        from $\Sigma_2+\Ball{0}{\rho_0}$ onto its  image.
        Note that this image $\tilde{J}(\Sigma_2+\Ball{0}{\rho_0})$ 
        contains $\Sigma_1$,
        since by Lemma \ref{lem:bilip-est}, $F$ maps $\Sigma_2$ diffeomorphically
        onto $\Sigma_1$. In particular, for any $\xi\in\Sigma_1$ there is exactly
        one $x\in\Sigma_2$ such that $F(x)=\xi$, so that for
        $z=x+0$ one has $\tilde{J}(z)=\lambda(x,0)=F(x)+0=\xi.$ This also shows
        that $\tilde{J}(\Sigma_2)=\Sigma_1.$

        To construct the global diffeomorphism we smoothly extend $\tilde{J}$ to
        all of $\R^n$ by the identity in the following way. 
        Let $\phi\in  C^1(\R)$
        be a cut-off function satisfying
        $0\le\phi\le 1 $ on $\R$, $\phi(t)=0$ for $t\le \rho_0/8,$
        $\phi(t)=1$ for $t\ge\rho_0/4,$ and $|\phi'(t)|\le 16/\rho_0$ for all
        $t\in\R.$ Define 
        $\eta:\Sigma_2+\Ball{0}{\rho_0}\to\R$ as
        $\eta(z):=|\pi_2\circ\Psi_2^{-1}(z)|,$ and the transition term
        $T:\Sigma_2+\Ball{0}{\rho_0}\to\R^n$ as $T(z):=\phi(\eta(z))I(z),$
        which is of class $C^1$ since $\phi(\eta(z))$ vanishes for
        $0\le\eta(z)\le \rho_0/8.$
        In addition, we can estimate the Lipschitz constant of the transition term
        using Lemma \ref{lem:tubular}\ref{i:Psi-bilip} 
        for $\Sigma_2$ and $\Psi_2$, \eqref{41B}, 
        and \eqref{eq:lip-I} as
        \begin{gather}
            \label{eq:lip-E}
            \lip(T) 
            \le \lip(\phi \circ \eta) \|I\|_{\infty} + 
            \|\phi \circ \eta\|_{\infty} \lip(I) 
            \le \frac{16^2}{\rho_0} \rho + 4 C_{l} \rho^{\alpha/2}
            \le  C_T \rho^{\alpha/2},\quad\rho\in (0,\rho_0] \,
        \end{gather}
        for $C_T:=16^2/\rho_0^{\alpha/2}+4C_l.$
        The global
        diffeomorphism $J:\R^n\to\R^n$ can now be defined as
        \begin{gather*}
            J(z) := 
            \left\{
                \begin{array}{ll}
                    \tilde{J}(z) -  T(z) & \text{for } 
                    z \in \Sigma_2 + \Ball{0}{\rho_0} 
                    \\
                    z & \text{otherwise} \,,
                \end{array}
            \right.
        \end{gather*}
        which is of class $C^1$ since $T(z)=I(z)$ (and hence
        $\tilde{J}(z)-T(z)=z$) if $z$ is contained in the transition
        zone $\Sigma_2+\Ball{0}{\rho_0}\without\Ball{0}{\rho_0/2}$.
        Indeed, then $z=x+v$ for some $(x,v)\in N_2$ satisfying, by
        Lemma \ref{lem:tubular}\ref{i:Psi-bilip},
        $$
        \frac{\rho_0}{2}\le\dist(z,\Sigma_2)=\dist(\Psi_2(x,v),\Sigma_2)
        \le|\Psi_2(x,v)-\Psi_2(x,0)|
        \le\sqrt{2}|(x-x,v-0)|=\sqrt{2}|v|,
        $$
        so that $|v|\ge\rho_0/(2\sqrt{2})>\rho_0/4,$ from which
        $
        \eta(z)=|\pi_2\circ\Psi_2^{-1}(z)|=|\pi_2(x,v)|=|v|>\rho_0/4
        $
        follows, and thus $\phi(\eta(z))=1$ for such $z$ in the transition zone.
        Combining \eqref{eq:bilip-J} with \eqref{eq:lip-E} we arrive at the desired
        bilipschitz estimate
        \begin{gather*}
            \big{(}1 - (4 C_{l} + C_T) \rho^{\alpha/2}\big{)} |z_1 - z_2|
            \le |J(z_1) - J(z_2)|
            \le \big{(}1 + (4 C_{l} +  C_T) \rho^{\alpha/2}\big{)} |z_1 - z_2| \,,
        \end{gather*}
        which establishes Part (3) of Lemma \ref{lem:amb-diff} if we set
        $C_J:=4C_l+C_T,$ and if we choose
        $\rho_g=\rho_g(R,L,\alpha,\lip(\Phi_1),\lip(\Phi_2))\in (0,\rho_0)$ so small
        that $C_J\rho^{\alpha/2}<1$ for all $\rho\in (0,\rho_g].$ Recall that we
        have fixed $\varepsilon=1/200$ and that $\lip(\Phi_1)$ and $\lip(\Phi_2)$
        depend only on $\varepsilon,R,L,\alpha,m,n$ according to Lemma
        \ref{lem:mollify}, which means that $\rho_0$ and hence also $\rho_g$ and
        $C_J$ actually depend on $R,L,\alpha,m,n$ only.
    \end{proof}

    \begin{rem}
        Inspecting the above proof one can see that
        \begin{gather*}
            \lip(J - \Id) = \lip(I - T) 
            = \lip(I (1 - \phi \circ \eta)) 
            \le \frac{16^2C_l}{\rho_0} \rho^{\alpha/2} \,.
        \end{gather*}
    \end{rem}

    \begin{proof}[Proof of Theorem~\ref{thm:diff}]
        According to Definition \ref{def:C1a-conv} we have 
        $\HD(\Sigma_0,\Sigma_j) \to 0$ as $j \to
        \infty$, so that  we can choose $j_0$ such  that 
        $\HD(\Sigma_0,\Sigma_j) \le \rho_g<\rho_G/2$
        for all $j \ge j_0$, where $\rho_G$ is 
        the constant from Lemma \ref{lem:bilip-est} for fixed $\eps:=1/200$.
        Therefore, by Corollary~\ref{cor:isotopy}, $\Sigma_j$ 
        is  ambient isotopic to $\Sigma_0$ for all $j \ge j_0$. 
        Moreover, 
        by means of Lemma~\ref{lem:amb-diff} we can find  for each $j \ge j_0$ 
        a~$C^1$-diffeomorphism of the ambient space $J_j : \R^n \to \R^n$ such
        that $\bilip(J_j) \le 1 + C_J\HD(\Sigma_1,\Sigma_2)^{\alpha/2}$ and 
        $J_j(\Sigma_j) = \Sigma_0$.
    \end{proof}

    \section{Semicontinuity}   
    \label{sec:lsc}

    \subsection{Preliminaries}
    \label{sec:5.1}

    Before passing to the proof of Theorem~\ref{thm:lsc}, we set up some
    notation, and prove two technical lemmata which explain how our discrete
    curvatures change under small bilipschitz perturbations of the identity map.

    \begin{defin}
        \label{def:meas-deriv}
        Let $N \in \N$ and $\mu, \nu \in \RM(\R^{N})$ be Radon measures. We~set
        \begin{gather*}
            D(\nu,\mu,x) = \lim_{r \downarrow 0}
            \frac{\nu(\Kugel^{N} (x,r))}{\mu(\Kugel^{N} (x,r))} \,,
        \end{gather*}
        where we interpret $0/0 = 0$.
    \end{defin}

    \begin{defin}
        \label{def:cart-power}
        For any function $F : X \to Y$ and any $l \in \N$, 
        we~define $F^{\times l} : X^l \to Y^l$ by the formula 
        $F^{\times l}(x_1,\ldots,x_l) = (F(x_1),\ldots,F(x_l))$. 
    \end{defin}

    \begin{defin}
        \label{def:DC_l}
        Let $l \in \{1,2,\ldots,m+2\}$ and $\Sigma \in \Cmn$. 
        For $l\le m+1$,  we define
        \begin{gather*}
            \DC_l[\Sigma](y_0,\ldots,y_{l-1}) 
            = \sup_{y_l,\ldots,y_{m+1} \in \Sigma} \DC(y_0,\ldots,y_{m+1}) 
            \quad \text{for $y_0, \ldots, y_{l-1} \in \Sigma$}
        \end{gather*}
        for $\DC$  given in the introduction by formula \eqref{def:DC}. 
        We additionally set $\DC_{m+2}[\Sigma]\equiv \DC$.
    \end{defin}

    \begin{defin}
        Let $A \subset \R^n$ and $l \in \N$. 
        We define \emph{the $l$-diagonal of $A$}
        \begin{gather*}
            \Delta^l A = 
            \{ (x_0,\ldots,x_l) \in A^l : x_0 = x_1 = \cdots = x_l \} \,.
        \end{gather*}
    \end{defin}

    Formally, the integrands $\DC$ and $\Rtp^{-1}$ are defined
    only off the diagonal $\Delta^l\Sigma$. It does not matter
    how one defines them on the diagonal: it does not
    affect the integral, since $\HM^{ml}(\Delta^l \Sigma) = 0$.
    Below, we also freely use the equivalence of measures
    $\HM^{ml} \narrow \Sigma^l \simeq (\HM^m \narrow \Sigma)^l$ 
    which holds as long as $\Sigma$ is an embedded submanifold 
    due to~\cite[3.2.23]{Fed69}. (Actually, it holds even if $\Sigma$ 
    is just a~subset of the image of a \emph{single} 
    Lipschitz function.)

    \begin{rem}
        \label{rem:RN-theo}
        Let $N \in \N$ and $\mu$, $\nu$ be Radon measures on $\R^N$. The
        Radon-Nikodym theorem (cf.~\cite[Theorem~2.12]{Mat95}) implies that if
        $\nu$ is absolutely continuous with respect to~$\mu$, then for any $f
        \in L^1(\R^N,\nu)$
        \begin{gather*}
            \int f(x)\ d\nu(x) = \int f(x) D(\nu,\mu,x)\ d\mu(x) \,.
        \end{gather*}
    \end{rem}

    \begin{lem}
        \label{lem:density}
        Let $\Sigma_1, \Sigma_2 \in \Cmn$, $F : \R^n \to \R^n$ be a~bilipschitz
        homeomorphism such that $F(\Sigma_1) = \Sigma_2$. Set $\mu = \HM^{ml}
        \narrow \Sigma_2^{l}$ and $\nu = (F^{\times l})_*(\HM^{ml} \narrow
        \Sigma_1^{l})$. Then $\mu$ and $\nu$ are mutually absolutely continuous and
        \begin{gather*}
            D(\nu,\mu,x) \le \lip(F^{-1})^{ml}
            \quad \text{and} \quad
            D(\mu,\nu,x) \le \lip(F)^{ml}
        \end{gather*}
        for all $x \in \R^{nl}$.
    \end{lem}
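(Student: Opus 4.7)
The plan is to reduce the density bounds to the classical Lipschitz scaling property of Hausdorff measure, namely $\HM^{ml}(g(A)) \le \lip(g)^{ml} \HM^{ml}(A)$ (see e.g.\ Federer~\cite{Fed69}), after first observing that the Cartesian power $F^{\times l}$ remains bilipschitz with the same constants as $F$ itself. Indeed, equipping $\R^{nl}$ with the Euclidean norm, one has
\begin{gather*}
|F^{\times l}(x) - F^{\times l}(y)|^2 = \sum_{i=1}^{l} |F(x_i) - F(y_i)|^2 \le \lip(F)^2 |x - y|^2,
\end{gather*}
so $\lip(F^{\times l}) \le \lip(F)$ and, symmetrically, $\lip((F^{-1})^{\times l}) \le \lip(F^{-1})$. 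Since $F$ is a bijection mapping $\Sigma_1$ onto $\Sigma_2$, the maps $F^{\times l}$ and $(F^{-1})^{\times l}$ are mutually inverse bilipschitz homeomorphisms between $\Sigma_1^l$ and $\Sigma_2^l$.

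Next I would exploit the set-theoretic identities
\begin{gather*}
\Sigma_2^l \cap E = F^{\times l}\bigl(\Sigma_1^l \cap (F^{-1})^{\times l}(E)\bigr), \\
\Sigma_1^l \cap (F^{-1})^{\times l}(E) = (F^{-1})^{\times l}\bigl(\Sigma_2^l \cap E\bigr),
\end{gather*}
valid for every Borel $E \subset \R^{nl}$, which follow immediately from the bijectivity of $F^{\times l}$ and $F^{\times l}(\Sigma_1^l) = \Sigma_2^l$. Applying the Lipschitz scaling inequality once to $g = (F^{-1})^{\times l}$ and once to $g = F^{\times l}$ then yields the two-sided comparison
\begin{gather*}
\nu(E) = \HM^{ml}\bigl((F^{-1})^{\times l}(\Sigma_2^l \cap E)\bigr) \le \lip(F^{-1})^{ml}\, \mu(E), \\
\mu(E) = \HM^{ml}\bigl(F^{\times l}(\Sigma_1^l \cap (F^{-1})^{\times l}(E))\bigr) \le \lip(F)^{ml}\, \nu(E).
\end{gather*}

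From these, mutual absolute continuity is immediate: $\mu(E) = 0$ forces $\nu(E) = 0$ and vice versa. Specialising $E = \Kugel^{nl}(x,r)$, dividing, and letting $r \downarrow 0$ gives the pointwise density estimates, with the $0/0 = 0$ convention taking care of those $x$ at which $\mu$ (respectively $\nu$) vanishes on every sufficiently small ball. I do not anticipate any real obstacle; the only subtlety worth monitoring is that passing from $F$ to the Cartesian power $F^{\times l}$ must not introduce an $l$-dependent factor in the Lipschitz constant, and the Euclidean computation above confirms precisely this.
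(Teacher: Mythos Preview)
Your proof is correct and follows essentially the same approach as the paper: both arguments rest on the Lipschitz scaling of Hausdorff measure applied via the set identity $(F^{-1})^{\times l}(\Sigma_2^l \cap E) = \Sigma_1^l \cap (F^{-1})^{\times l}(E)$, after noting that $\lip(F^{\times l}) = \lip(F)$. The only cosmetic difference is that you prove the global comparison $\nu(E) \le \lip(F^{-1})^{ml}\mu(E)$ for all Borel $E$ first and then specialise to balls, whereas the paper works directly with balls; your packaging is arguably slightly cleaner since mutual absolute continuity drops out immediately.
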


    \begin{proof}
        If $x \in \R^{nl} \without \Sigma_2^l$, then $\dist(x,\Sigma_2^l) > 0$, so
        for $0 < r < \dist(x,\Sigma_2^l)$ we have $\mu(\Kugel^{ml} (x,r)) = 0 =
        \nu(\Kugel^{ml} (x,r))$ and, according to Definition~\ref{def:meas-deriv},
        $D(\mu,\nu,x) = D(\nu,\mu,x) = 0$.

        Note that $\lip(F^{\times l}) = \lip(F)$ and $(F^{\times l})^{-1} =
        (F^{-1})^{\times l}$. Furthermore, observe that for $x \in \Sigma_2^l$ and
        $0 < r < \infty$
        \begin{gather*}
            \Sigma_1^l \cap (F^{\times l})^{-1}(\Kugel^{ml} (x,r))
            = \Sigma_1^l \cap (F^{\times l})^{-1}(\Sigma_2^l \cap \Kugel^{ml} (x,r))
            = (F^{\times l})^{-1}(\Sigma_2^l \cap \Kugel^{ml} (x,r)) \,;
        \end{gather*}
        hence
        \begin{gather*}
            \frac{\nu(\Kugel^{ml} (x,r))}{\mu(\Kugel^{ml}(x,r))}
            = \frac{\HM^{ml}((F^{\times l})^{-1}(\Sigma_2^l \cap \Kugel^{ml} (x,r)))}
            {\HM^{ml}(\Sigma_2^l \cap \Kugel^{ml} (x,r))}
            \le \lip(F^{-1})^{ml}
        \end{gather*}
        and consequently $D(\nu,\mu,x) \le \lip(F^{-1})^{ml}$. The estimate for
        $D(\mu,\nu,x)$ is obtained by writing
        \begin{gather*}
            \Sigma_2^l \cap \Kugel^{ml}(x,r)  
            = F^{\times l} 
            ( (F^{\times l})^{-1}(\Sigma_2^l \cap \Kugel^{ml}(x,r))) \,.
            \qedhere
        \end{gather*}
    \end{proof}

    \begin{lem}
        \label{lem:DC-conv}
        Let $\Sigma_1, \Sigma_2 \in  \Lmn$, $0 < \varepsilon < 1/2$. 
        Assume $F :
        \Sigma_1 \to \R^n$ is bilipschitz and satisfies 
        $F(\Sigma_1) = \Sigma_2$,
        $F(z) = z + G(z)$ for $z \in \Sigma_1$ and some 
        $G : \Sigma_1 \to \R^n$
        having $\lip(G) \le \varepsilon$. 
        Then for any $T \in \Sigma_1^l \without
        \Delta^l \Sigma_1$ and $l \in \{2,\ldots,m+2\}$ 
        \begin{gather*}
            \left| \DC_l[\Sigma_2](F^{\times l}(T)) - \DC_l[\Sigma_1](T) \right|
            \le \varepsilon C_{\ref{lem:DC-conv}} \diam(\simp T)^{-1} \,,
        \end{gather*}
        where $C_{\ref{lem:DC-conv}} = C_{\ref{lem:DC-conv}}(m) > 0$.
    \end{lem}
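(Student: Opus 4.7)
The plan is to first reduce to the case $l = m+2$ and then estimate separately the numerator (the $(m+1)$-dimensional simplex volume) and the denominator ($\diam$ raised to the power $m+2$) appearing in $\DC$.

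For the \emph{reduction step}, fix $T = (y_0,\ldots,y_{l-1})$. Since $F : \Sigma_1 \to \Sigma_2$ is a bijection, substituting $y_i' = F(y_i)$ for $i \ge l$ in the definition of $\DC_l[\Sigma_2](F^{\times l}(T))$ rewrites it as
\begin{gather*}
    \sup_{y_l,\ldots,y_{m+1} \in \Sigma_1} \DC\bigl(F(y_0),\ldots,F(y_{m+1})\bigr).
\end{gather*}
Combined with the elementary inequality $|\sup_y A(y) - \sup_y B(y)| \le \sup_y |A(y) - B(y)|$ and the observation that $\diam\{y_0,\ldots,y_{m+1}\} \ge \diam(\simp T) > 0$ whenever $T \notin \Delta^l \Sigma_1$, the claim reduces to the pointwise estimate for $l = m+2$: I need to show that for any $(x_0,\ldots,x_{m+1}) \in \Sigma_1^{m+2}$ with $d := \diam\{x_0,\ldots,x_{m+1}\} > 0$,
\begin{gather*}
    \bigl|\DC\bigl(F(x_0),\ldots,F(x_{m+1})\bigr) - \DC(x_0,\ldots,x_{m+1})\bigr| \le C(m)\, \varepsilon/d \,.
\end{gather*}

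For the \emph{numerator}, I set $v_i := x_i - x_0$ and $w_i := G(x_i) - G(x_0)$, so that $|w_i| \le \varepsilon |v_i| \le \varepsilon d$, and the edge vectors of $\simp(F(T))$ emanating from $F(x_0)$ are $v_i + w_i$. Using the representation $\HM^{m+1}(\simp T) = |v_1 \wedge \cdots \wedge v_{m+1}|/(m+1)!$ and multilinearly expanding
\begin{gather*}
    \bigwedge_{i=1}^{m+1}(v_i + w_i) - \bigwedge_{i=1}^{m+1} v_i
    = \sum_{\emptyset \ne S \subseteq \{1,\ldots,m+1\}} \bigwedge_{i=1}^{m+1} a_i^S,
    \quad a_i^S = \begin{cases} w_i, & i \in S \\ v_i, & i \notin S \end{cases},
\end{gather*}
each mixed term has norm at most $\varepsilon^{|S|} d^{m+1}$; summing and using $\varepsilon < 1/2$ yields $|\HM^{m+1}(\simp F(T)) - \HM^{m+1}(\simp T)| \le C_1(m)\, \varepsilon\, d^{m+1}$.

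For the \emph{denominator}, the bilipschitz estimate $|F(x_i)-F(x_j)| = |(x_i - x_j) + (G(x_i) - G(x_j))|$ combined with $\lip(G) \le \varepsilon$ gives $(1-\varepsilon) d \le d_F := \diam\{F(x_0),\ldots,F(x_{m+1})\} \le (1+\varepsilon)d$, and a mean-value argument yields $|d_F^{m+2} - d^{m+2}| \le C_2(m)\,\varepsilon\, d^{m+2}$.

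Finally, I combine these via the splitting
\begin{gather*}
    \DC(F(T)) - \DC(T)
    = \frac{\HM^{m+1}(\simp F(T)) - \HM^{m+1}(\simp T)}{d_F^{m+2}}
    + \HM^{m+1}(\simp T)\left(\frac{1}{d_F^{m+2}} - \frac{1}{d^{m+2}}\right),
\end{gather*}
using $d_F^{m+2} \ge (1-\varepsilon)^{m+2} d^{m+2} \ge 2^{-(m+2)} d^{m+2}$ together with the trivial bound $\HM^{m+1}(\simp T) \le d^{m+1}/(m+1)!$ on each summand. Both pieces end up bounded by a dimensional multiple of $\varepsilon/d$, which is the desired estimate.

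There is no genuine obstacle here: the argument is essentially multilinear algebra plus elementary estimates. The only point that requires a little care is the bookkeeping of the combinatorial constants in the wedge-product expansion of Step~2 and in the mean-value estimate of Step~3, which is why the hypothesis $\varepsilon < 1/2$ is used to keep $(1+\varepsilon)^{m+1}$ and $(1-\varepsilon)^{-(m+2)}$ under control by dimensional constants.
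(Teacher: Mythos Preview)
Your proof is correct and follows essentially the same approach as the paper: both reduce to the case $l=m+2$ via the bijectivity of $F$ and the inequality $|\sup A-\sup B|\le\sup|A-B|$, then handle the base case by a multilinear wedge-product expansion for the simplex volume together with the bilipschitz diameter estimate. The only cosmetic differences are the order of presentation (you do the reduction first, the paper does the base case first) and the precise algebraic splitting of $\DC(F(T))-\DC(T)$ into two error terms.
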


    \begin{proof}
        First we treat the case $l = m+2$. Let $T = (x_0,\ldots,x_{m+1}) \in
        \Sigma_1^l \without \Delta^l \Sigma_1$.  Set $u_i = x_i - x_0$, $v_i =
        F(x_i) - F(x_0)$ and $e_i = G(x_i) - G(x_0)$ for $i = 1, 2, \ldots,m+1$.
        Observe that $v_i = u_i + e_i$ and that $|e_i| \le \varepsilon |u_i|$.
        We compute
        \begin{multline*}
            \left| \HM^{m+1}( \simp F^{\times l}(T) ) - \HM^{m+1}(\simp T)
            \right| = \tfrac 1{(m+1)!} \Big| |v_1 \wedge \cdots \wedge v_{m+1}|
                - |u_1 \wedge \cdots \wedge u_{m+1}| \Big|
            \\
            \le \tfrac 1{(m+1)!} \diam(\simp T)^{m+1} \sum_{i=1}^{m+1}
            \tbinom{m+1}{i} \varepsilon^i \le \tfrac{2^{m+1}}{(m+1)!}
            \diam(\simp T)^{m+1} \varepsilon \,.
        \end{multline*}
        Since $(1-\varepsilon) \diam(\simp T) \le \diam(\simp F^{\times l}(T))
        \le (1+\varepsilon) \diam(\simp T)$ and recalling $\varepsilon < 1/2$,
        we obtain
        \begin{gather}
            \label{eq:l=m+2}
            \left| \DC( F^{\times l}(T) ) - \DC(T) \right|
            \le \tfrac{\varepsilon}{1-\varepsilon} \left(
                \DC(T) + 
                \tfrac{2^{m+1} \varepsilon} {(m+1)!(1-\varepsilon)} 
                \tfrac 1{\diam(\simp T)} 
            \right)
            \le C \varepsilon \diam(\simp T)^{-1} \,,
        \end{gather}
        where $C = C(m) > 0$.

        In case $ 2\le  l < m+2$ for 
        $T = (x_0,\ldots,x_{l-1}) \in \Sigma_1^l \without
        \Delta^l \Sigma_1$ we employ the assumption that 
        $F$ is bilipschitz, so that
        we can write
        \begin{align*}
            (\DC_l[\Sigma_2] \circ F^{\times l})(T) 
            &= \sup_{y_l,\ldots,y_{m+1} \in \Sigma_2} 
            \DC( F(x_0),\ldots,F(x_{l-1}), y_l,\ldots,y_{m+1})
            \\
            &= \sup_{x_l,\ldots,x_{m+1} \in \Sigma_1} 
            \DC( F(x_0),\ldots,F(x_{m+1}))
        \end{align*}
        and using \eqref{eq:l=m+2} 
        \begin{multline*}
            \sup_{x_l,\ldots,x_{m+1} \in \Sigma_1} \DC( F(x_0),\ldots,F(x_{m+1}))
            \\
            \le \sup_{x_l,\ldots,x_{m+1} \in \Sigma_1} \left(
                \DC(x_0,\ldots,x_{m+1})
                + C \varepsilon \diam(\{x_0,\ldots,x_{m+1}\})^{-1}
            \right)
            \\
            \le \DC_l[\Sigma_1](T) + C \varepsilon \diam(\simp T)^{-1} \,.
        \end{multline*}
        In the same way we obtain the lower bound
        \begin{gather*}
            \sup_{x_l,\ldots,x_{m+1} \in \Sigma_1} \DC( F(x_0),\ldots,F(x_{m+1}))
            \ge \DC_l[\Sigma_1](T) - C \varepsilon \diam(\simp T)^{-1} \,.
            \qedhere
        \end{gather*}
    \end{proof}

    A similar lemma does hold for the $\Rtp$ function.

    \begin{lem}\label{lem:Rtp-conv}
        Let $\Sigma_1, \Sigma_2 \in  \Lmn$, 
        $0 < \varepsilon < 1/2$. Assume $F :
        \Sigma_1 \to \R^n$ is bilipschitz and satisfies $F(\Sigma_1) = \Sigma_2$,
        $F(z) = z + G(z)$ for $z \in \Sigma_1$ and some $G : \Sigma_1 \to \R^n$
        having $\lip(G) \le \varepsilon$. Then for $\HM^m$-almost all 
        $x_1,y_1 \in \Sigma_1$, $x_1\not=y_1$, we have
        \begin{gather}
            \label{ineq5.8}
            \left| \frac{1}{\Rtp[\Sigma_1](x_1,y_1)} - 
                \frac{1}{\Rtp[\Sigma_2](F(x_1),F(y_1))}\right|
            \le \frac{C_{\ref{lem:Rtp-conv}} \, \varepsilon}{|x_1-y_1|} \,,
        \end{gather}
        where $C_{\ref{lem:Rtp-conv}} = C_{\ref{lem:Rtp-conv}}(m) > 0$.
    \end{lem}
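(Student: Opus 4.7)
The plan is to exploit the identity
\begin{gather*}
\Rtp[\Sigma](x,y)^{-1} = \frac{2\,|(T_x\Sigma)^\perp_\natural(y-x)|}{|x-y|^2}
\end{gather*}
and to estimate numerator and denominator separately under the perturbation $F=\Id+G$ with $\lip(G)\le\varepsilon<1/2$.

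First, at $\HM^m$-a.e.\ $x_1\in\Sigma_1$, both $\Sigma_1$ admits a classical tangent plane at $x_1$ and $\Sigma_2$ admits one at $x_2:=F(x_1)$, and the tangential differential $dG(x_1):T_{x_1}\Sigma_1\to\R^n$ exists with operator norm $\|dG(x_1)\|\le\lip(G)\le\varepsilon$ and
\begin{gather*}
T_{x_2}\Sigma_2=(\Id+dG(x_1))(T_{x_1}\Sigma_1)
\end{gather*}
(justified in a Lipschitz chart by Rademacher's theorem and the chain rule; see the last paragraph). For any such $x_1$, applying \cite[8.9(5)]{All72} with $\eta_1:=dG(x_1)$, $\eta_2:=0$, $S=S_2:=T_{x_1}\Sigma_1$, $S_1:=T_{x_2}\Sigma_2$ yields the angle bound
\begin{gather*}
\ang(T_{x_1}\Sigma_1,T_{x_2}\Sigma_2)\le\varepsilon(1-\varepsilon^2)^{-1/2}\le 2\varepsilon,
\end{gather*}
equivalently $\|(T_{x_1}\Sigma_1)^\perp_\natural-(T_{x_2}\Sigma_2)^\perp_\natural\|\le 2\varepsilon$.

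Now for such an $x_1$ and arbitrary $y_1\in\Sigma_1$ with $y_1\ne x_1$, set $y_2:=F(y_1)$, $u_i:=y_i-x_i$ and $d_i:=|(T_{x_i}\Sigma_i)^\perp_\natural u_i|$ for $i=1,2$. Since $u_2-u_1=G(y_1)-G(x_1)$ has length at most $\varepsilon|u_1|$, one obtains $(1-\varepsilon)|u_1|\le|u_2|\le(1+\varepsilon)|u_1|$, hence $\bigl||u_2|^2-|u_1|^2\bigr|\le 3\varepsilon|u_1|^2$, and by two applications of the triangle inequality
\begin{gather*}
|d_2-d_1|\le\|(T_{x_2}\Sigma_2)^\perp_\natural-(T_{x_1}\Sigma_1)^\perp_\natural\|\,|u_1|+|u_2-u_1|\le 3\varepsilon|u_1|.
\end{gather*}
Splitting
\begin{gather*}
\left|\frac{2d_1}{|u_1|^2}-\frac{2d_2}{|u_2|^2}\right|\le\frac{2|d_1-d_2|}{|u_2|^2}+2d_1\left|\frac{1}{|u_1|^2}-\frac{1}{|u_2|^2}\right|
\end{gather*}
and using $d_1\le|u_1|$ together with the previous two bounds produces exactly the claimed estimate $C_{\ref{lem:Rtp-conv}}\varepsilon/|x_1-y_1|$ (in fact a universal constant $C$ is enough; dependence on $m$ is only kept for uniformity with Lemma~\ref{lem:DC-conv}).

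The only nontrivial technical point, and the one I expect to require the most care in a full write-up, is the identity $T_{x_2}\Sigma_2=(\Id+dG(x_1))(T_{x_1}\Sigma_1)$ at $\HM^m$-a.e.\ $x_1$. If $\phi:U\to\Sigma_1$ is a bi-Lipschitz parametrisation of a neighbourhood of $x_1$ with $U\subset\R^m$ open, then both $\phi$ and $F\circ\phi:U\to\Sigma_2$ are Lipschitz and hence a.e.\ differentiable on $U$; at any $\xi\in U$ where $\phi$, $G\circ\phi$ and $F\circ\phi$ are all differentiable and where the classical tangent planes of $\Sigma_1$, $\Sigma_2$ exist at $\phi(\xi)$ and $F(\phi(\xi))$ respectively, the chain rule together with $F=\Id+G$ forces the asserted relation. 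The set of such $\xi$ has full $\mathcal{H}^m$-measure in $U$, and its $\phi$-image has full $\HM^m$-measure in $\Sigma_1$, which is all that is needed for the statement of the lemma.
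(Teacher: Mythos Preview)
Your argument is correct and follows essentially the same route as the paper: write $\Rtp^{-1}=2d_i/|u_i|^2$, split the difference by the triangle inequality, control $|u_1-u_2|$ and $\big||u_1|^2-|u_2|^2\big|$ via $\lip(G)\le\varepsilon$, and bound $|d_1-d_2|$ using the angle estimate $\ang(T_{x_1}\Sigma_1,T_{x_2}\Sigma_2)\le C\varepsilon$ coming from $T_{x_2}\Sigma_2=(\Id+dG(x_1))(T_{x_1}\Sigma_1)$. The only cosmetic difference is that the paper invokes \cite[Prop.~2.5]{KStvdM-GAFA} for this angle bound (hence the $m$-dependent constant), whereas you appeal to \cite[8.9(5)]{All72}; note however that Allard's \cite[8.9(5)]{All72} is stated for maps $\eta_i:S\to S^\perp$, so it does not apply verbatim with $\eta_1=dG(x_1)$ (which need not land in $T_{x_1}\Sigma_1^\perp$) --- but the bound you need is immediate anyway, since for $e\in T_{x_1}\Sigma_1\cap\Sphere$ one has $e+dG(x_1)e\in T_{x_2}\Sigma_2$ and hence $\dist(e,T_{x_2}\Sigma_2)\le|dG(x_1)e|\le\varepsilon$.
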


    \begin{proof} Set $x_2=F(x_1)$, $y_2=F(y_1)$. Without loss of generality,  
        by the classic Rademacher theorem, 
        assume that $G$ is differentiable at $x_1$ and the tangent spaces to both
        manifolds, $U_i:=T_{x_i}\Sigma_i$, are well defined for $i=1,2$. Then,
        \[
        \frac{1}{\Rtp[\Sigma_i](x_i,y_i)} = \frac{2d_i}{|x_i-y_i|^2}\, , 
        \qquad i=1,2,
        \]
        where $d_i=\dist(y_i-x_i, U_i)$. By the triangle inequality,
        \begin{eqnarray}
            \left| \frac{1}{\Rtp[\Sigma_1](x_1,y_1)} 
                - \frac{1}{\Rtp[\Sigma_2](F(x_1),F(y_1))}\right| 
            & \le & \frac{2|d_1-d_2|}{|x_1-y_1|^2}\label{2termsRtp} \\
            & & {} + 2d_2 \left| \frac{1}{|x_1-y_1|^2} 
                - \frac{1}{|x_2-y_2|^2}\right|\, . \nonumber	
        \end{eqnarray}
        We shall show that each of these two terms is controlled by a constant 
        multiple of $\varepsilon |x_1-y_1|^{-1}$. Indeed, since $d_i\le |x_i-y_i|$ 
        and
        \[
        (1-\varepsilon)|x_1-y_1| \le |x_2-y_2| 
        = |F(x_1)-F(y_1)|\le (1+\varepsilon)|x_1-y_1|\, ,
        \]
        we easily estimate the second term on the right hand side of \eqref{2termsRtp},
        \begin{align}
            2d_2 \left| \frac{1}{|x_1-y_1|^2} - \frac{1}{|x_2-y_2|^2}\right|
            & \le \frac{2|x_2-y_2|}{|x_1-y_1|^2|x_2-y_2|^2} 
            \left| {|x_1-y_1|^2} - {|x_2-y_2|^2}\right| \nonumber\\
            & \le \frac{2}{(1-\varepsilon) |x_1-y_1|^3}
            \, \cdot\, \varepsilon (2+\varepsilon)|x_1-y_1|^2
            \label{1-Rtp}\\
            & < \frac{10\varepsilon}{|x_1-y_1|}
            \qquad\mbox{as $\varepsilon\in (0,\frac 12)$.}\nonumber
        \end{align}
        To estimate the first term on the right hand side of \eqref{2termsRtp}, 
        it is enough to check that $|d_1-d_2|\le C\varepsilon |x_1-y_1|$. 
        Note that $d_i=\dist(x_i-y_i,U_i)=|(x_i-y_i)-(U_i)_\natural(x_i-y_i)|$, 
        so that
        \begin{align}
            |d_1-d_2| 
            &\le |(x_1-y_1)-(x_2-y_2)| 
            + |(U_1)_\natural(x_1-y_1)-(U_2)_\natural(x_2-y_2)|
            \nonumber \\
            & \le \varepsilon|x_1-y_1| 
            + \big|(U_1)_\natural\big( (x_1-y_1)-(x_2-y_2)\big)\big| 
            + |((U_1)_\natural-(U_2)_\natural)(x_2-y_2)|
            \label{2-Rtp}\\
            & \le 2\varepsilon|x_1-y_1| 
            + \|(U_1)_\natural-(U_2)_\natural\|\, \cdot\,  (1+\eps) |x_1-y_1| \, .
            \nonumber
        \end{align}
        By the assumption on $F$ and $x_1$, we have
        \[
        U_2=DF(x_1)(U_1)=(\mathrm{Id}+DG(x_1))(U_1)\, , 
        \qquad \|DG(x_1)\|\le \varepsilon\, 
        \]
        The estimate of the angle between $m$-planes, 
        see \cite[Prop.~2.5]{KStvdM-GAFA}, yields 
        $\|(U_1)_\natural-(U_2)_\natural\|\le C \varepsilon$ 
        for some constant $C=C(m)$, and the lemma follows.
    \end{proof}

    \subsection{Semicontinuity, compactness and existence of minimisers}
    \label{sec:5.2}

    We are now ready to give the proof of Theorem~\ref{thm:lsc} and
    Corollary~\ref{thm:minimizer}. We begin with lower semicontinuity which is
    crucial for the compactness of sublevel sets of geometric curvature 
    and the existence of energy minimisers in isotopy classes.

   \begin{rem*}  There are several more or less equivalent ways to phrase 
	the argument which yields semicontinuity. If the curvature
	integrand contains no supremum, then the aim can be achieved by 
	(a) localization, (b) parametrization of all integrals by the same 
	domain, (c) an application of Fatou's lemma which is enabled by 
	the $C^1$ convergence of the parameterisations. Such an argument is 
	presented in \cite[pp. 2297--2298]{SvdM11a}. However, even in the 
	case of curves the integrands involving a supremum are by no means 
	continuous (see e.g. \cite[Section~3]{SvdM07}) with respect 
	to $C^1$ convergence, which requires more 
	care, cf. e.g. the proof of Thm.~3 in \cite{SvdM07}. In order to avoid
	a case by case study of the appropriately understood lower 
	semicontinuity of all the integrands considered, we present 
	here a general argument which is streamlined so that all the cases can be 
	included into the same scheme.\footnote{The proof is flexible 
	enough to show that all the energies are in fact lower 
	semicontinous w.r.t.~the bilipschitz 
	convergence of submanifolds also in the case $p\le p_0$, when
	we have no $C^{1,\alpha}$--regularization effect.}

   \end{rem*}

    \begin{proof}[Proof of part (i) of Theorem~\ref{thm:lsc}]
        Fix $l \in \{1,2,\ldots, m+2\}$. For $j \in \N \cup \{0\}$ let $\Sigma_j
        \in \Amn(E,d)$. Assume that $\Sigma_j$ converges in Hausdorff distance
        to $\Sigma$ and, without loss of generality, such that
        \begin{equation}\label{limeqliminf}
            \lim_{j\to\infty}\E(\Sigma_j)=\liminf_{j\to\infty}\E(\Sigma_j).
        \end{equation}
        Hence for some fixed $x\in\Sigma$ we find a sequence of points
        $x_j\in\Sigma_j$ such that $x_j\to x$ as $j\to\infty$, so that the
        shifted submanifolds $\tilde{\Sigma}_j:= \Sigma_j-x_j$ converge in
        Hausdorff-distance to $\tilde{\Sigma}:= \Sigma-x.$ Hence
        $0\in\tilde{\Sigma}$ and $0\in\tilde{\Sigma}_j$, and by translation
        invariance of the geometric curvature energies,
        $\E(\tilde{\Sigma}_j)=\E(\Sigma_j)\le E$ for all $j\in\N.$ Thus, by the
        Regularity Theorem, $\tilde{\Sigma}_j \in \Cmn(R,L,d)$, for
        appropriate $R,L$ given by \eqref{RLfromEp} depending only on the fixed
        $p$ and on the uniform energy threshold $E$. Hence, by the compactness
        result in $\Cmn(R,L,d)$, Theorem \ref{thm:compactness}, we find a
        subsequence still denoted by $\tilde{\Sigma}_j$ and some submanifold
        $\tilde{\Sigma}_0\in\Cmn(R,L,d)$ such that
        $\tilde{\Sigma}_j\to\tilde{\Sigma}_0$ in $\Cmn$ (see Definition
        \ref{def:C1a-conv}), which immediately implies that
        $\tilde{\Sigma}=\tilde{\Sigma}_0$ is contained in $\Cmn(R,L,d)$, so that
        we can apply all results of Section 4 to $\tilde{\Sigma}_j$ and
        $\tilde{\Sigma}$, and, in addition, we may evaluate the energy at
        $\Sigma=\tilde{\Sigma}+x$ to obtain $\E(\Sigma)= \E(\tilde{\Sigma})$, so
        it is enough to establish
        $\E(\tilde{\Sigma})\le\lim_{j\to\infty}\E(\tilde{\Sigma}_j).$ To
        simplify notation, we identify $\tilde{\Sigma}$ with $\Sigma$, and
        $\tilde{\Sigma}_j$ with $\Sigma_j$ from now on.

        We may also assume that for each $j$
        \[
        \HD (\Sigma_j,\Sigma) < \rho_g, \qquad\mbox{where $\rho_g$ is 
          given by Lemma~\ref{lem:amb-diff}.}
        \]
        Now, for $j \in \N$ set $\mu_j := \HM^{ml} 
        \narrow
        \Sigma_j^{l}$, $\mu:= \HM^{ml}\narrow\Sigma^l,$ and let $J_j : 
        \R^n \to \R^n$ be the diffeomorphism
        constructed in Lemma~\ref{lem:amb-diff} such that $J_j(\Sigma) =
        \Sigma_j$. 

        Observe that, by Lemma~\ref{lem:amb-diff}, $\bilip(J_j) \le 1 +
        C_J \HD(\Sigma,\Sigma_j)^{\alpha/2}$. 
        Moreover, the restriction $F_j\colon=J_j\mid_{\Sigma}$ satisfies
        \begin{equation}
            \label{Lip-Gj} F_j(x) = x+G_j(x) 
            \quad\mbox{on $\Sigma$,}\qquad \lip(G_j)=: 
            \eps_j, \qquad \eps_j\to 0 
            \quad\mbox{as $j\to\infty$,}
        \end{equation}
        since by Lemma~\ref{lem:bilip-est} $\eps_j\le C_l 
        \HD(\Sigma_j,\Sigma)^{\alpha/2}$.        

		 Now, the reader who is not overly keen to follow 
		all the technical details can skim the lines from here 
		to \eqref{pre-lsc}, thinking that, roughly, Step~1 below 
		involves the reparameterisations and the control of Jacobians,
		Step~2 is the place where `Fatou does the job', and Step~3 is the 
		price to pay for the flexibility of the argument (in the case where
		the integrand contains too many suprema).

        \medskip\noindent\emph{Step 1 (fixing the domain of integration).} 
        We shall
        first check that if $\F \in \{ \E_p^l, \TP_p, \TP_p^G \}$ is one of the
        energies considered, then -- in order to check 
        that $\F(\Sigma)\le \liminf \F(\Sigma_j)$ -- we can consider 
        the limes inferior of a sequence of
        integrals
        over a \emph{fixed} domain $\Sigma$, with appropriately 
        perturbed integrands.
        (In our application we could write actual limits because of \eqref{limeqliminf},
        but that does not  affect any of the following arguments.)

        Indeed, in each of the cases considered we have
        \begin{equation}
            \label{general-F} 
            \F(\Sigma)=\int_{\Sigma^l} (K_\Sigma)^p\, d\HM^{ml}\,  
            \qquad\mbox{for an appropriate integrand}
            \quad K_\Sigma\colon \Sigma^l\to [0,\infty ]\, .
        \end{equation}
        (To fix the ideas, we assume in all cases 
        $K_\Sigma\equiv\infty$ on $\Delta^l\Sigma$; 
        this does not affect the value of $\F$ as 
        $\HM^{ml}(\Delta^l\Sigma) = 0.$) 
        Thus, changing the variables and using Lemma~\ref{lem:density}, 
        for a sequence of
        $\Sigma_j$'s with $\sup_j\F(\Sigma_j)$ finite we obtain
        \begin{eqnarray*}
            \left| \F(\Sigma_j) 
            - \int \big(K_{\Sigma_j} \circ J_j^{\times l}\big)^p\ d\mu \right|
            & = & \left| 
                \int\big( K_{\Sigma_j}\big)^p\ d\mu_j 
                - \int \big(K_{\Sigma_j} \circ J_j^{\times l}\big)^p\ d\mu 
            \right| \\ 
            & = & \left| \int \big( K_{\Sigma_j}\big)^p\ d\mu_j 
                - \int \big( K_{\Sigma_j}\big)^p\ d((J_{j}^{\times l})_*\mu) 
            \right| \\
            & = & \left|  
                \int \big( K_{\Sigma_j}\big)^p\ 
                \left(1 - 
                    D\big((J_{j}^{\times l})_{*}\mu,\mu_j,\cdot)
                \right)\ d\mu_j  \right|\\
            & \le & C \left(\sup_j\F(\Sigma_j) \right)
            \HD(\Sigma_j,\Sigma)^{\alpha/2} 
            \xrightarrow{j \to \infty} 0 \,.
        \end{eqnarray*}
        The last inequality follows from the fact that by
        Lemma~\ref{lem:density} applied to $\nu=(J_{j}^{\times l})_*\mu$ and
        $\mu=\mu_j$ we have the density estimate
        \[
        \frac 1{\big(\lip J_j\big)^{ml}}\le 
        D\big((J_{j}^{\times l})_{*}\mu,\mu_j,\cdot)\le 
        \big(\lip J_j^{-1}\big)^{ml}
        \]
        Therefore, as $\bilip(J_j) \le 1 +
        C_J \HD(\Sigma,\Sigma_j)^{\alpha/2}\to 1$, we have
        \begin{equation}
            \left|1 - 
                D\big((J_{j}^{\times l})_{*}\mu,\mu_j,\cdot)
            \right|\le C 
            \HD(\Sigma_j,\Sigma)^{\alpha/2}\, .
        \end{equation}
        All this yields
        \begin{gather}
            \label{eq:push-energy}
            \liminf_{j \to \infty}  \int
            \big(K_{\Sigma_j} \circ J_j^{\times l}\big)^p\ d\mu
            = \liminf_{j \to \infty} \F(\Sigma_j) \,.
        \end{gather}
        Below, we work with the left hand side of \eqref{eq:push-energy}.

        \medskip\noindent\emph{Step 2 (energies with at least two integrals 
          over the manifold).} Suppose now that $l\ge 2$. If $\F=\E^l_p$, $l\ge 2$, 
        with the integrand
        \[
        K_{\Sigma} = \DC_l[\Sigma]\colon \Sigma^l\to [0,\infty)
        \]
        being the discrete curvature from Definition~\ref{def:DC_l}, then, 
        by Lemma~\ref{lem:DC-conv}, we have
        \begin{equation}
            \label{KK:estimate}
            K_{\Sigma}(T) \le K_{\Sigma_j}\big(F_j^{\times l} (T)\big) 
            +\frac{C(m)\, \eps_j}{\diam T}, 
            \qquad T\in \Sigma^l\without \Delta^l \Sigma\, .
        \end{equation}
        If on the other hand $\F=\TP_p$ (so that $l=2$), and
        \[
        K_{\Sigma}= \frac{1}{\Rtp[\Sigma]}\colon \Sigma\times \Sigma\to [0,\infty], 
        \]
        with the tangent--point radius $\Rtp$ defined  
        on $\Sigma^2\without\Delta^2\Sigma$ by \eqref{def:Rtp},
        then inequality \eqref{KK:estimate} holds by Lemma~\ref{lem:Rtp-conv}. 
        Thus, in both cases we can use \eqref{KK:estimate} to write, for 
        a fixed $T\in \Sigma^l\without \Delta^l \Sigma$,
        \begin{eqnarray*}
            K_{\Sigma}(T)^p & \le & \liminf_{j\to\infty}\left(
                K_{\Sigma_j}\big(F_j^{\times l} (T)\big) +
                \frac{C(m)\, \eps_j}{\diam T}\right)^p \\
            & = & \liminf_{j\to\infty}
            K_{\Sigma_j}\big(J_{j}^{\times l} (T)\big)^p\, 
            \qquad\mbox{(as $J_j=F_j$ on $\Sigma $).}
        \end{eqnarray*}
        Since $\HM^{ml}(\Delta^l \Sigma) = 0$, we can now ingrate both sides 
        w.r.t.\ $\mu$ and invoke Fatou's lemma (cf.~\cite[2.4.6]{Fed69}) to obtain
        \begin{eqnarray*}
            \F(\Sigma) =\int K_{\Sigma}(T)^p\, d\mu 
            & \le & \int \liminf_{j\to\infty}\, 
            K_{\Sigma_j} \big(J_j^{\times l} (T)\big)^p\, \, d\mu\\
            & \le &  \liminf_{j\to\infty}\,
            \int K_{\Sigma_j}\big(J_j^{\times l} (T)\big)^p\, \, d\mu\\
            & = & \liminf_{j \to \infty} \F(\Sigma_j)
        \end{eqnarray*}
        by \eqref{eq:push-energy}. This concludes the proof of Theorem~1 
        for $\F=\E^l_p$ with $l\ge 2$ and for $\F=\TP_p$.

        \medskip\noindent\emph{Step 3 (energies with a single integral).} The case
        $l=1$, i.e. when $\F=\E^1_p$, resp. $\F=\TP^G_p$, needs a separate treatment.
        We shall now work with the auxiliary integrands
        \[
        K_\Sigma\colon \Sigma\times\Sigma\to [0,\infty)\, ,
        \]
        using $K_\Sigma=\DC_2[\Sigma]$ for $\F=\E^1_p$, resp. $K_\Sigma = 
        1/\Rtp[\Sigma]$ for $\F=\TP^G_p$.

        The argument from Step~2 does not work here, as for $l=1$ we deal with
        simplices $T$ that degenerate to one point, and \eqref{KK:estimate} would yield
        nothing. To avoid this problem, we remove a~small neighbourhood of the
        diagonal, and pass to the limit twice. Here are the details.

        For a fixed $s\in \N$, set
        \begin{equation}
            \label{def:K-sup-hole}
            K_{\Sigma,s} (x) = \sup_{\substack{y\in \Sigma\\|y-x|\ge 1/s}}
            K_\Sigma(x,y)\, , \qquad x\in \Sigma\, .
        \end{equation}
        Define
        \[
        \F_s(\Sigma)=\int_{\Sigma} 	K_{\Sigma,s}^p\, d\HM^m\, .
        \]
        Note that $0\le K_{\Sigma,1}^p\le K_{\Sigma,2}^p\le\ldots$, $K_{\Sigma,s}^p\nearrow K_\Sigma^{p}$ as $s\to\infty$,
        so that  by the monotone convergence
        theorem, we have in each of the two cases ($\F=\E^1_p$ or $\F=\TP_p^G$) 
        that are being considered 

        \begin{equation}
            \label{mono-Fs}
            \F(\Sigma) =\sup_{s\in\N} \F_s(\Sigma) = \lim_{s\to\infty}\F_s(\Sigma)\, .
        \end{equation}
        Repeating Step~1 for each of the $\F_s$, we obtain
        \begin{equation}
            \label{energy-push-Fs}
            \liminf_{j \to \infty}  \int
            \big(K_{\Sigma_j, s} \circ J_j\big)^p\ d\mu
            = \liminf_{j \to \infty} \F_s (\Sigma_j) \,.
        \end{equation}
        Rewriting~\eqref{KK:estimate} for $l=2$, $T=(x,y)$, $x\not=y\in \Sigma$, 
        for the auxiliary integrands $K_\Sigma$, we obtain
        \begin{equation}
            \label{KK'}
            K_{\Sigma}(x,y) \le K_{\Sigma_j}\big(F_j(x), F_j(y) \big) 
            +\frac{C(m)\, \eps_j}{|x-y|}\, .
        \end{equation}
        We shall use this estimate for $s$ fixed and $j>1$ so large that 
        $\eps_j< \frac 1{s+1}$ (keep in mind that $\eps_j\to 0$ as $j\to\infty$). 
        Then, for points $x,y\in \Sigma$ with $|x-y|\ge \frac 1s$, we have
        \[
        |F_j(x)-F_j(y)|\ge (1-\eps_j) |x-y| \ge 
        \left(1-\frac 1{s+1}\right)\frac 1s = \frac 1{s+1},
        \]
        and upon taking the suprema of both sides of \eqref{KK'} with respect 
        to $y\in \Sigma$, $|x-y|\ge \frac 1s$, we obtain
        \begin{equation}
            \label{KKsingle}
            K_{\Sigma,s}(x)\le K_{\Sigma_j,s+1} (F_j(x)) + C(m)s\cdot \eps_j\, .
        \end{equation}
        Thus, for each $x\in \Sigma$,
        \[
        K_{\Sigma,s}(x)^p\le \liminf_{j\to\infty}  K_{\Sigma_j,s+1} (F_j(x))^p\, .
        \]
        Integration and Fatou's lemma yield now
        \begin{gather}
            \begin{aligned}
                \F_s(\Sigma) 
                &\le \liminf_{j\to\infty}\int K_{\Sigma_j,s+1} (F_j(x))^p\, d\mu \\
                &\stackrel{\eqref{energy-push-Fs}}=
                \liminf_{j \to \infty} \F_s (\Sigma_j)  \label{pre-lsc}
                 \le \liminf_{j \to \infty} \F (\Sigma_j) \,, 
            \end{aligned}
        \end{gather}
        as $\F_s\le \F$ for all $s\in \N$. Upon taking the supremum 
        of the left-hand sides with respect to
        $s\in \N$, in light of \eqref{mono-Fs}, we conclude 
        the proof for $\F=\E^1_p$ and for $\F=\TP_p^G$.
    \end{proof}

    \begin{proof}[Proof of part (ii) of Theorem~\ref{thm:lsc}]
         By the Regularity Theorem, 
        $\Sigma_j\in\Cmn(R,L,d)$ for all $j\in\N$, where the parameters
        $R,L$ are given by \eqref{RLfromEp} and do not depend on $j$. Thus,
        Theorem \ref{thm:compactness} implies that there is a subsequence (still
        denoted by $\Sigma_j$) and a submanifold $\Sigma\in\Cmn(R,L,d)$, such
        that $\Sigma_j\to\Sigma$ in $\Cmn$, i.e., in the sense of Definition
        \ref{def:C1a-conv}, which implies in particular that $\Sigma_j\to
        \Sigma$ in Hausdorff-distance, that $\diam\Sigma\le d$, and that $\Sigma\in\Lmn$
        Therefore,  we
        may evaluate the energy $\E$ on $\Sigma$. Part (i) implies that
        $\Sigma\in\Amn(E,d)$.
    \end{proof}

    \begin{proof}[Proof of Corollary \ref{thm:minimizer}]
        Notice that the class
        $$
        \mathcal{C}:=\{\Sigma\in\Amn(E,d):
        \textnormal{$\Sigma$ is ambient isotopic to $M_0$}\}
        $$
        contains the reference manifold $M_0$, so that we can find a minimising
        sequence $(\Sigma_j)_j\subset\mathcal{C}$ with
        $\E(\Sigma_j)\to\inf_\mathcal{C}\E$ as $j\to\infty.$ The uniform energy
        bound $E$ implies by the Regularity Theorem that
        $\Sigma_j\in\Cmn(R,L,d)$ for all $j\in\N$, where the parameters $R,L$
        depend only on the energy bound and on the integrability parameter $p$,
        so that we can apply the improved compactness result, Theorem
        \ref{thm:compactness}, to deduce the existence of a subsequence (still
        denoted by $\Sigma_j$) that converges to a limit submanifold
        $\Sigma_0\in\Cmn(R,L,d)$ in $\Cmn$. Then the isotopy result, Theorem
        \ref{thm:diff}, implies that $\Sigma_j$ is ambient isotopic to
        $\Sigma_0$ for $j$ sufficiently large, which implies that
        $\Sigma_0\in\mathcal{C}$.  Part (i) of Theorem \ref{thm:lsc} finally
        leads to
        $$
        \inf_\mathcal{C}\E\le\E(\Sigma_0)\le\liminf_{j\to\infty}\E(\Sigma_j)=\inf_\mathcal{C}\E,
        $$
        which concludes the proof.
    \end{proof}

    \section{Bounds on the number of diffeomorphism  and isotopy types}

    \label{sec:finiteness}

    \begin{proof}[Proof of Theorem~\ref{thm:finiteness}]

        Fix an energy $\E\in\{\E^l_p,\TP_p,\TP_p^G\}$    
        and a $p>p_0(\E)$. Let $\Sigma\in
        \Amn(E,d)$ be a manifold with controlled energy and diameter, cf.~\eqref{Amn}.
        Translating $\Sigma$ if necessary, we have $\Sigma \in \Cmn(R,L,d)$ by the
        Regularity Theorem, where the parameters $R$ and $L$ depend only 
        on $E$ and $p$. Thus, $\Sigma \subset [-d,d]^n$.

        Now fix $\varepsilon:=1/200$ and let 
        $\mathfrak{F} = \{ Q_1, Q_2,\ldots Q_N\}$ be a minimal collection
        of~closed cubes of edge $e:=\rho_G/(2\sqrt n)$ covering $[-d,d]^n$;
        here $\rho_G>0$ is the constant of Lemma~\ref{lem:bilip-est} and
        Corollary~\ref{cor:isotopy} for $\varepsilon=1/200$. 
        Notice that the dependence of
        $\rho_G$ on the Lipschitz constant of an $\varepsilon$-normal 
        map for $\Sigma$ boils down to
        $\rho_G=\rho_G(R,L,\alpha,m,n)$ since we have fixed $\varepsilon$; 
        see Lemma \ref{lem:mollify}.
        Clearly, the cardinality of $\mathfrak{F}$
        satisfies
        \begin{equation}
            \label{eq:card-F}
            \HM^0(\mathfrak{F}) = N\le  k^n, \qquad\mbox{with}\qquad 
            k= \Big\lceil\frac{2d\sqrt{n}}{{\rho_G}}\Big\rceil\, .
        \end{equation}

        Following Durumeric \cite[Section~5]{Dur02} (see also the remarks in Peters
        \cite[Section~5]{Pet84}), to each $\Sigma \in \Cmn(R,L,d)$ 
        we assign the subset
        $P(\Sigma)\subset \mathfrak{F}$ which consists of those cubes $Q \in
        \mathfrak{F}$ that intersect $\Sigma$,
        i.e.
        \begin{gather*}
            Q \in P(\Sigma) \subset \mathfrak{F} 
            \iff          Q \cap \Sigma \ne \emptyset \,.
        \end{gather*}

        If~$P(\Sigma_1) = P(\Sigma_2)$, then obviously $\HD(\Sigma_1,\Sigma_2)$
        does not exceed the diameter of all the $Q_i$ which equals
        $e\sqrt{n}= \rho_G/2$. Hence, by Corollary~\ref{cor:isotopy},
        $\Sigma_1$ and $\Sigma_2$ are ambient isotopic. Therefore, the number of
        distinct isotopy classes of manifolds $\Sigma \in \Amn(E,d)$ is not larger
        than $K=2^N$, the number of all subsets of $\mathfrak{F}$.
        
        Finally, since $\rho_G = \rho_G(R,L,\alpha,m,n)$ depends only on
        $R$ and $L$ which are given, for a particular energy $\E$ and an upper
        energy bound $E$, by \eqref{RLfromEp} in the Regularity Theorem, and on $\alpha
        = 1- p_0(\E)/p$,  it is clear
        that $K=K(E,d,m,n,p)$.
    \end{proof}

    \begin{rem} 
        \label{rem:connected}	
        The estimate $K\le 2^N$ is obviously not optimal for \emph{connected}
        manifolds. If $\Sigma$ is connected, then the union of all cubes in $P(\Sigma)$
        is connected, too; thus, one only needs to count those subsets of
        $\mathfrak{F}$ which have connected unions. (For $n=1$ there are $2^k$ subsets
        of the family of intervals and only $O(k^2)$ connected subsets!) One can prove
        \cite{PiliPili} that the number $K_{\text{con}}$ of such subsets of
        $\mathfrak{F}$ satisfies
        \begin{equation}
            \label{eq:pilipczuk}
            (2-a(n))^N\le K_{\text{con}}\le (2-b(n))^N\, ,\qquad N\le k^n\, ,
        \end{equation}
        where $a,b\colon \N\to (0,\infty)$ are positive (but go to zero as 
        the dimension $n\to
        \infty$). 

        Here is the gist of the argument. Assume for the sake of simplicity that the
        closed cubes in $\mathfrak{F}$ have disjoint interiors, and that 
        $k=2d/\eps$ is
        divisible by $3$.

        To obtain the upper bound, divide $[-d,d]^n$ into larger cubes $\widetilde Q$,
        each of them consisting of $3^n$ of the initial $Q_j$'s. In each $\widetilde
        Q_0$, one $Q_{j_0}$ --- call it \emph{central} --- contains the center of
        $\widetilde Q_0$ and is separated from other $\widetilde Q_i$'s by a layer of
        small $Q_j$'s. Now, if for a connected $\Sigma$ the subset $P(\Sigma)$ contains
        one of the small \emph{central} $Q_j$'s, then it must contain at least one small
        cube from the layer around this $Q_j$ unless  the whole
        $P(\Sigma)=\{Q_j\}$. This limits the number of possible choices of
        $P(\Sigma)$ and yields the upper bound in \eqref{eq:pilipczuk}.

        To obtain the lower bound, one constructs a specific family of subsets of
        $\mathfrak{F}$ with connected unions, e.g. as follows. Let $X\subset
        \mathfrak{F}$ consist of $k^{n-1}$ little cubes adjacent to a fixed
        $(n-1)$-dimensional face of $[-d,d]^n$ (think of it as the bottom face) and
        of $(k/3)^{n-1}$ thin vertical,  symmetrically placed  `towers'
        standing on the bottom face, each of these towers consisting of $k-1$ little
        cubes and reaching to the top of the whole box $[-d,d]^n$. Thus, 
        \[ \mbox{the number of cubes in $X$} = k^{n-1}+
        \frac{1}{3^{n-1}}k^{n-1}(k-1)\, . \] 
        Note that adding to $X$ \emph{any}
        subset of $\mathfrak{F}\without X$, we obtain a family of cubes with
        connected union (because each of the cubes in $\mathfrak{F}\without X$
        touches one of the towers in $X$). From this, one obtains the lower bound
        for $K_{\text{con}}$.

        It is however clear that \eqref{eq:pilipczuk} does not take into account any
        \emph{global\/} information on $\Sigma$ (e.g., it does not exclude those
        subsets of $\mathfrak{F}$ that are too small or too flat to cover a $\Sigma$
        with $\E(\Sigma)\le E$). \end{rem}

    \begin{rem}[\textbf{Explicit bounds}] 
        \label{rem:explicit}
        One can track an estimate of $N$ (the number of little cubes in
        $\mathfrak{F}$) in terms of the energy bounds $\E(\Sigma)\le E$ etc. as
        follows.

        \begin{enumerate}
            \renewcommand{\labelenumi}{(\roman{enumi})} 
        \item Note that $R$ and $L$ given
            by \eqref{RLfromEp} in the Regularity Theorem 
            satisfy $R^\alpha L=c(m,n,l,p)$.
        \item Lemma~\ref{lem:hd-angle}
            with $A=4$ yields $C_{\text{{\rm ang}}}(L,4)=257L+8$. 
        \item
            Lemma~\ref{lem:mollify} gives the Lipschitz constant of 
            the $\eps$-normal map,
            cf. \eqref{def:lipnorm} and Remark~\ref{rem:lipnorm}. 
        \item The number $\rho_G$ for fixed $\eps=1/200$ emerges
            in~Lemma~\ref{lem:bilip-est} and Corollary~\ref{cor:isotopy}, via
            the constant $C_l$; a combination of
            \eqref{eq:G-lip}--\eqref{def:rhoG} with (i) above shows that we can
            have, e.g.,
            \begin{equation*} 
                \label{est:1overrho}
                \begin{split} 
                    \frac{1}{\rho_G} = C_l^{2/\alpha} 
                    & \le c(m,n,l,p) (L+1)^{2(2\alpha+1)/\alpha^2}\\ 
                    & \le \tilde c(m,n,l,p) \big(E^{1/p}+1\big)^{2(2\alpha+1)/\alpha^2},
                    \qquad \alpha = 1-\frac{p_0(\E)}p\, . 
                \end{split} 
            \end{equation*} 
        \end{enumerate} 
        Plugging the last estimate into $\log\log K\le \log N\le n\log
        \big(1+2d\sqrt{n}/\rho_G\big)$, one obtains the bound
        \eqref{est:K-finiteness} stated in the Introduction.
    \end{rem}

  \section*{Acknowledgments}

  Parts of this work were completed between 2012~and~2015, while the authors, in
  various combinations, were visiting the Albert Einstein Institute in Golm and
  RWTH Aachen University. They are all grateful for the hospitality and for
  excellent working conditions.

  The first author was partially supported by the Foundation for Polish Science.
  The first and second author were partially supported by NCN Grant no.
  2013/10/M/ST1/00416 \emph{Geometric curvature energies for subsets of the
    Euclidean space.} The third author's work is partially funded by the
  Excellence Initiative of the German federal and state governments.

  \bibliography{refs}{}
  \bibliographystyle{acm}
\end{document}